\newtheorem{Corollary}{Corollary}
\newtheorem{Proposition}{Proposition}
\theoremstyle{remark}
\newtheorem{Example}{Example}
\newtheorem{Remark}{Remark}
\def\spin{{[-1,1]}}
\def\pgf{\emph{p.g.f.~}}
\def\pgfl{\emph{p.g.fl.~}}
\def\iid{\emph{i.i.d.}~}
\def\odd{{\text{\rm odd}}}
\def\even{{\text{\rm even}}}
\begin{document}
\begin{frontmatter}
\title{Gaussian Fields on a hypercube\\from Long Range Random Walks}
 \author{Robert Griffiths}

\affiliation{
organization={School of Mathematics},
            addressline={Monash University},
            city={Clayton},
            postcode={3800}, 
            state={Victoria},
            country={Australia}
         }
	
\ead{Bob.Griffiths@Monash.edu}
\date{}

\begin{abstract}
We consider a class of  Gaussian Free Fields  denoted by $(g_x)_{x \in {\cal V}_N}$,  where $ {\cal V}_N = \{0,1\}^N$ and $N\in \mathbb{Z}_+$. 
These fields are related to a general class of $N$-dimensional random walks on the hypercube, which are killed at a certain rate. The covariance structure of the Gaussian free field is determined by the Green function of these random walks. 
There exists a coupling such that the Gaussian free fields  ${\cal G}_N := \big (g_{x}\big )_{x \in {\cal V}_N}$  form a Markov chain where $N$ is time.
 If the $N$ entries of the random walk are exchangeable, then the random variables in the Gaussian field can be coupled with spin glass models.
A natural choice is to take the increments of the random walk to be from a de Finetti sequence with elements $\{0,1\}$. The random walk is then well defined on ${\cal V}_\infty$. The Green function and a strong representation for $(g_x)$ are characterized by a point process which involves the de Finetti measure of the increments of the random walk. 
A limit theorem as $N\to \infty$ is found for level set sums of the Gaussian free field.
 In the limit Gaussian process
the covariance function is a mixture of a bivariate normal density, with the correlation mixed by a distribution on $[-1,1]$.
We also study a complex Gaussian field which is the transform of the Gaussian process limit.\\
\begin{keywords} 
\phantom{;}de Finetti sequence; Gaussian Field; Long range random walk; Random walk on a Hypercube.\\[0.1cm]
\textbf{MSC} 60G15; 60G09; 82B41. 
\end{keywords}\\[0.1cm]
\today
\end{abstract}
\end{frontmatter}
\section{Introduction}\label{Intro:sec}
 A Gaussian free field is defined as a collection of zero mean Gaussian random variables, denoted by $(g_x)_{x \in \cal{V}}$,  for some vertex set $\cal{V}$,
 such that any finite sub-collection of these  random variables is jointly distributed multivariate normal. The covariance structure of the field is defined by the Green function of a  Markov Chain \citep{D1984}. 
For an introduction to Gaussian fields see \cite{WP2020} and \cite{B2016}.
In this paper we study Gaussian free fields on the hypercubes
${\cal V}_N:= \{0,1\}^N$ and ${\cal V}_\infty := \{0,1\}^\infty$.  In our context, the covariance structure is associated to a random walk on the hypercube killed at a geometric stopping time.
\cite{CG2021} studies a class ${\cal G}$ of reversible long-range random walks $ (X_t)$, in discrete time, on the hypercube ${\cal V}_N$, which have an $N$-product Bernoulli$(p)$ stationary distribution,  where $p\ge 1/2$. In this paper we fix $p=\frac{1}{2}$. A description of such processes is the following.  They are  homogeneous in time. There are \iid  random variables $(Z_t)$ in ${\cal V}_N$ with $Z_t$ independent of $(X_\tau)_{\tau \leq t}$ 
such that 
\begin{equation}
X_{t+1} = X_t + Z_t \mod 2,\ t\in \mathbb{N},
\label{rw:000}
\end{equation}
beginning with $X_0$.
Notice that the random walk (\ref{rw:000}) can be taken directly on ${\cal V}_\infty$ if the probabilistic structure of $(Z_t)$ is well defined. 
 As an example on ${\cal V}_N$, if we choose $Z_t$ to be the uniform random variable over all the $N$ unit vectors, we recover the classical simple random walk on the hypercube. 

Section \ref{randomwalkexamples} illustrates some other possible choices of $Z_t$. The general  process described can have  multiple entries of $Z_t$ changing in a single transition.

Our Gaussian free field $\big (g_x\big )_{x\in {\cal V}_N}$ is chosen to have a covariance function
\begin{equation*}
\text{\rm Cov}\big (g_x,g_y\big ) = (1-\alpha)G(x,y;\alpha),
\label {covdefn:00}
\end{equation*}
where $G(x,y;\alpha)$ is the Green function of $(X_t)$, killed at a Geometric time in $\mathbb{N}$, with mean  $\alpha/(1-\alpha)$, for some $\alpha \in (0,1)$.
Usually the covariance matrix is taken as $G(x,y;\alpha)$, however we want to keep the interpretation
\[
\mathbb{P}\big (X_t\text{~is~killed~at~}y\text{~for~}t > 0\mid X_0=x\big )
= (1-\alpha)G(x,y;\alpha).
\]
The random walk (1) is a version of an Ehrenfest urn process where there are $N$ labeled balls which have colours either red or blue. In a transition a ball is chosen at random and its colour is changed. Then entries of $X_t$ which are $1$ indicate red balls and $0$ blue balls.
An extension is to allow multiple balls to be chosen in a transition, controlled by the entries of $Z_t$ which are $1$. \cite{DG2012} explore these processes when $Z_t$ has exchangeable 
entries. For these processes the number of red balls $\|X_t\|$ has transition functions which have a diagonal spectral expansion with eigenfunctions the Krawtchouk polynomials.
More than this fact is a characterization of \emph{all} such processes with Krawtchouk polynomial eigenfunctions. That is, a characterization of all possible eigenvalues corresponding to
Krawtchouk polynomial eigenfunctions. The eigenvalues have a representation as mixtures of Krawtchouk polynomials. The Krawtchouk polynomials always appear as eigenfunctions, and all the information about $Z_t$ is contained in the eigenvalues. \cite{CG2021} consider mixing in such processes and \cite{CG2025} construct a Gaussian field from them. In this paper the entries of $Z_t$ do not need to be exchangeable, though interesting facts emerge when they are. A characterization of these processes is made by characterizing eigenvalues in a spectral expansion, generalizing the Krawtchouk polynomial case. The Gaussian fields we consider are very much controlled by the form of the eigenvalues in the spectral expansions.


Although the Green function is calculated from a discrete-time process it is equivalent to a construction from a continuous-time process. Consider a Poisson process embedding of the discrete process to form transition functions in continuous time $\tau \geq 0$ by
\begin{equation*}
{\cal P}_\tau(y\mid x) = \sum_{k=0}^\infty e^{-\tau}\tau^k\frac{1}{k!}
\mathbb{P}_k(y\mid x ),
\label{ctstime:00}
\end{equation*}
where we denote $\mathbb{P}_k(y\mid x )=\mathbb{P}\big (X_k=y\mid X_0=x)$.
Take $\tau$ in the continuous process to be a killing time with rate  $\theta > 0$. A calculation shows that
\begin{equation*}
\theta u^\theta(x,y) := \theta\int_0^\infty e^{-\theta\tau}{\cal P}_\tau(y\mid x)d\tau
= (1-\alpha) G(x,y;\alpha),\text{~with~}\alpha = \frac{1}{1+\theta}.
\label{Greencts:000}
\end{equation*}
$u^\theta(x,y)$ is a $\theta$-potential kernel occuring in local time calculations in Gaussian processes \citep{MR2006}.

We use the notation $x[k]$ for the $k^{\text{th}}$ entry of a vector $x$. For example if $X_t\in {\cal V}_N$, then $X_t[k]$ is the $k^{\text{th}}$ entry of $X_t$, for $k=1,\ldots, N$. Another notation used will be that $x(C)$ is a sub-vector of $x$ defined by $x(C) = (x[k];k\in C)$ where $C$ is a subset of the index variables of $x$.
\subsection{Spectral expansions}
The spectral expansion of the distribution of $(X_t)$ is important in finding a representation of $(g_x)$ as a linear form of independent standard normal random variables.
\cite{CG2021} characterizes a class of processes $(X_t)$ with transition functions
 for $x,y\in {\cal V}_N$ of
 \begin{equation}
\mathbb{P}(y\mid x) = \frac{1}{2^N}\Big \{1 + \sum_{A\subseteq [N], A \ne \emptyset}
 \rho_A\prod_{k\in A}(-1)^{x[k]+y[k]}\Big \},
 \label{transition:00}
 \end{equation}
 where $[N]:=\{1,2,\ldots,N\}$ and
$ \rho_A = \mathbb{E}\Big [\prod_{k\in A}(-1)^{Z[k]}\Big ]$. 
 
 The distribution of $Z$ itself has an expansion, for $z \in {\cal V}_N$, of
  \begin{equation*}
\mathbb{P}(Z=z) = \frac{1}{2^N}\Big \{1 + \sum_{A\subseteq [N], A \ne \emptyset}
 \rho_A\prod_{k\in A}(-1)^{z[k]}\Big \},
 \label{zexpansion:00}
 \end{equation*} 
 obtained from $\mathbb{P}(Z=z)=\mathbb{P}(z\mid x_0)$ in (\ref{transition:00}), where $x_0$ has all entries zero.
 We write $Z$ as a generic version of $Z_t$ in (\ref{rw:000}).
 The tensor products in the spectral expansion (\ref{transition:00}) are derived from spins, elements of the set $\otimes_{k=1}^N\big \{1, (-1)^{x[k]}\big \}$. They are orthonormal under a uniform distribution of $x \in {\cal V}_N$.
The random walk  $(X_t)$ defined in (\ref{rw:000}), in the case when $(Z_t)$ are 
\iid,  admits the spectral expansion  appearing in (\ref{transition:00}). Conversely such a spectral expansion always arises from a random walk with \iid $(Z_t)$.
 Let $x(A) = \big \{x[k]; k \in A\big \}$, for $x\in {\cal V}_N$, $A\subseteq [N]$, then
$x(A)$ is of dimension $|A|$.  
Another way to write (\ref{transition:00}) in terms of Hamming distances is
 \begin{equation*}
\mathbb{P}(y\mid x) = \frac{1}{2^N}\Big \{1 + \sum_{A\subseteq [N], A \ne \emptyset}
 \rho_A \cdot(-1)^{\|y(A)-x(A)\|}\Big \}.
 \label{transition:0001}
 \end{equation*}
If the entries in $Z$ have an exchangeable distribution  then (\ref{transition:00}) simplifies to an expansion in terms of Krawtchouk polynomials. With exchangeability
$\rho_A$ depends only on $|A|$, and we write $\rho_{|A|} \equiv \rho_A$. In this case
\begin{align}
\mathbb{P}(y\mid x) &= \frac{1}{2^N}\Big \{1 + \sum_{k=1}^N\rho_k{N\choose k}
Q_k(\|y-x\|;N,\frac{1}{2})\Big \},
 \label{transition:01}\\
  \rho_k &= \mathbb{E}\big [(-1)^{Z[1]+\cdots + Z[k]}\big ]\label{transitionspin:02}\\  
  &= \mathbb{E}\big [Q_k(Z[1]+\cdots + Z[k];N,\frac{1}{2}) \big ],
 \nonumber
\end{align}
where $(Q_k)$ are Krawtchouk polynomials orthogonal on a Binomial$(N,\frac{1}{2})$ distribution. A generating function for them is
\begin{equation}
\sum_{k=0}^N{N\choose k}Q_k(w;N,\frac{1}{2})\phi^k = (1-\phi)^w(1+\phi)^{N-w}.
\label{kgf:001}
\end{equation}
This version of the Krawtchouk polynomials is scaled so that $Q_k(0)=1$.
Another representation of $(Q_k)$ in terms of spins, for $w\in {\cal V}_N$, $k \in [N]$, is
\begin{equation}
Q_k\big (\|w\|;,N,\frac{1}{2}\big ) = {N\choose k}^{-1}\sum_{A\subseteq [N],\ |A|=k}\prod_{j\in A}\ (-1)^{w[j]},
\label{Qspin:00}
\end{equation}
with $Q_0\big (\|w\|;N,\frac{1}{2}\big ) = 1$. See \cite{DG2012} for (\ref{kgf:001}) and (\ref{Qspin:00}).

The $t$-step transition probabilities of $(X_t)$, $\mathbb{P}_t(y\mid x)$, where $X_t=y$ and $X_0=x$, are similar to  
(\ref{transition:00}) and (\ref{transition:01}) with $\rho_A$ and $\rho_k$ replaced by
$\rho_A^t$ and $\rho_k^t$, respectively. {$\mathbb{P}_0(y\mid x)=\delta_{xy}$ is correctly specified by taking $t=0$ in the spectral expansion for $\mathbb{P}_t(y\mid x)$. Provided $\sum_{k\in A}Z[k]$ is not constant for all $A\subseteq [N]$ then $\lim_{t\to \infty}\mathbb{P}_t(y\mid x) = \frac{1}{2^N}$, that is the chain is ergodic and the steady state distribution is uniform over the hypercube. The stationary distribution has an intuitive explanation that whenever an entry in $X_t$ is chosen
 with $Z_t[k]=1$ it flips, and the entries in $Z_t$ control how lazy the process is because if $Z_t[k]=0$ then $X_{t+1}[k] = X_t[k]$.

The Hamming distance from zero, $\|X_t\|$, is the same as a generalized Ehrenfest urn process studied in 
\cite{DG2012}. The process $(\|X_t\|)$  is a Markov chain with transition probabilities for $u,w \in [N]$, $t\in \mathbb{N}$ of
\begin{equation*}
\mathbb{P}(\|X_{t+1}\|=w \mid \|X_t\|=u)
= {N\choose w}\frac{1}{2^N}\Big \{1 + \sum_{k=1}^N\rho_k{N\choose k}
Q_k(w;N,\frac{1}{2})Q_k(u;N,\frac{1}{2})\Big \}.
\label{transition:02}
\end{equation*}
Let $T_\alpha$ be a geometric random variable with
\begin{equation}
\mathbb{P}\big (T_\alpha = t) = (1-\alpha)\alpha^t,\ t\in \mathbb{N}.
\label{killing:00}
\end{equation}
The Green function of the stopped process $X_{T_{\alpha}}$ corresponding to killing at rate $1-\alpha$ in each transition satisfies
\begin{align}
(1-\alpha)G(x,y;\alpha)	
& = \mathbb{E}\big [\mathbb{P}_{T_{\alpha}}\big (y\mid x)\big ]\nonumber \\
&= \sum_{t=0}^\infty(1-\alpha)\alpha^t\mathbb{P}_t\big (y\mid x)\nonumber \\
& = \frac{1}{2^N}\Big \{1 + \sum_{A\subseteq [N], A \ne \emptyset}
 \frac{1}{1 + \frac{\alpha}{1-\alpha}(1-\rho_A)}
 \prod_{k\in A}(-1)^{x[k]+y[k]}\Big \}
 \label{transition:a00}\\
 & = \frac{1}{2^N}\Big \{1 + \sum_{k=1}^N
 \frac{1}{1 + \frac{\alpha}{1-\alpha}(1-\rho_k)}
 {N\choose k}Q_k(\|y-x\|;N,\frac{1}{2})\Big \},
 \label{transition:a01}
\end{align}
where (\ref{transition:a00}) holds in general and (\ref{transition:a01}) holds when the entries in $Z$ are exchangeable. $G(x,y;\alpha)$ is the mean number of visits to $y$ beginning from $x$. $(1-\alpha)G(x,y;\alpha)$ has an interpretation as the probability of being killed at $y$ beginning from $X_0=x$.
The Hamming distance of the stopped process $\|X_{T_{\alpha}}\|$ has a Green function satisfying
\begin{align*}
&(1-\alpha)G_H(\|x\|,\|y\|;\alpha)		\nonumber \\
& = {N\choose \|y\|}\frac{1}{2^N}\Big \{1 + \sum_{k=1}^N
 \frac{1}{1 + \frac{\alpha}{1-\alpha}(1-\rho_k)}
 {N\choose k}Q_k(\|y\|;N,\frac{1}{2})Q_k(\|x\|;N,\frac{1}{2})\Big \}.
 \label{transition:b01}
\end{align*}
The process $(\|X_t\|)$ is equivalent to the generalized Ehrenfest urn process in \cite{DG2012}.

We suppress parameters in  notation but use them when important. For example $(X_t)$ is the suppressed version of $(X^{(N)})_{t\in {\cal V}_N}$ and $G(x,y;\alpha)$
is the suppressed version of 
$G^{(N)}(x,y;\alpha_N)$. 
\section{Summary of paper contents} 
Section \ref{randomwalkexamples} contains examples (a)-(g) of random walks on ${\cal V}_N$ and ${\cal V}_\infty$ with geometric killing. In the examples $\rho_A$ is calculated from knowing the distribution of $Z\in {\cal V}_N$.
In Example (a) the distribution of $Z$ is general; in example (b) the $N$ entries in $Z$ are from a $0-1$ de Finetti sequence; in example (c) $Z$ is a unit vector from a 
simple random walk. An extension made to when exactly $M$ entries of $Z$ are unity with the reminder zero; example (d) considers a limit when  $N\to \infty$, $\alpha \to 1$, $\rho_A \to 0$ with $\rho_A/(1-\alpha)$ converging and makes a comparision of the limits between the two random walks in (c); in example (e) one entry in $Z$ is chosen at random to be Bernoulli$(1/2)$
and the other entries are zero; in example (f) a de Finetti measure is taken as Beta$(a_\alpha,\kappa)$ and the limit considered when $\alpha \to 1$ and $a_\alpha/(1-\alpha) \to 0$. The limit has a description in terms of a Poisson Dirichlet point process; in Example (g) the entries of $Z$ form a homogeneous Markov chain.

In Section \ref{Gauss:sec} a strong representation of $\big (g_x\big )$ 
as a linear sum of \iid  standard normal random variables $\big (g_A\big )_{A\subseteq [N]}$ is found.
In Proposition \ref{Markov:000} Gaussian fields $\big (g_{x_N}\big )_{x_N \in {\cal V}_N}$, 
with $x_N$ the first $N$ entries in $x\in {\cal V}_\infty$, are coupled over $N\in \mathbb{N}$ and shown to form a Markov chain.
If $Z$ has exchangeable entries, then $g_x$ is a linear sum 
 of $k$-spin variables $S_k\big (x;[N],(g_A)\big )$, which  are defined, for $C$ a finite subset of $\mathbb{N}$, as
\begin{equation}
S_k(x;C,(g_A)) =\sum_{A \subseteq C,\ |A|=k}\ \ \prod_{j\in A}(-1)^{{x}[j]}g_A,
 \label{kspin:123}
\end{equation}
with $(g_A)_{A\subseteq C}$ \iid N$(0,1)$ random variables.
$S_k\big (x;[N],(g_A)\big )$ are $k$-spin Hamiltonian models in spin-glasses \citep{T2014}. If $k=2$, (\ref{kspin:123}) is the Sherrington-Kirkpatrick model.
In Subsection \ref{GaussCentre:sec} a centered Gaussian field is defined by
$g^\circ_x = g_x - 2^{-N}\sum_{y\in {\cal V}_N}g_y$,
then Gaussian fields $g_{x(C_x)}$ and  $g^\circ_{x(C_x)}$ are defined by the marginal sums of Gaussian variables $g_x, g^\circ_x,\
 x \in {\cal V}_N$ with fixed entries $x[k],\ k \in C_x \subseteq [N]$.
If
$Z$ has exchangeable entries then $g_{x(C_x)},g^\circ_{x(C_x)}$ are linear sums of $k$-spin variables (\ref{kspin:123}) with $C=C_x$.
We take the weak limit from  $\big (g^{\circ}_{x(C_x)}\big )$
as $N\to \infty$ resulting in a Gaussian field 
$\big (g^{\infty}_{x(C_x)}\big )$
 where $x \in {\cal V}_\infty$ and $C_x$ is a finite subset of $\mathbb{Z}_+$.
The Gaussian field has independent increments in the sense that $g^\infty_{x(C_x)}$ and $g^\infty_{y(C_y)}$ are independent if $C_x\cap C_y = \emptyset$.
Section \ref{deF:sec} studies an important model where the elements of $Z$ arise from $N$ random variables taken from a de Finetti sequence with elements in $\{0,1\}$. 
Although the Gaussian field is on ${\cal V}_N$ it is effectively controlled by the de Finetti measure of the sequence. The elements of $X_t$ are $N$ entries from a de Finetti sequence as well. We investigate how the de Finetti measure for $X_t$ changes over time. The de Finetti measure of $X_{T_\alpha}$ when there is killing is calculated. In this context $\alpha$ does not usually depend on $N$. An important random variable is $Y$, the product of points in a point process with points in $[-1,1]$.  The probability generating functional (\pgfl) for the point process is
\begin{equation*}
G[f] = \frac{1}{1 + \frac{\alpha}{1-\alpha}\Big (
\int_\spin \big (1-f(\xi)\big )\nu^\spin(d\xi)\Big )},
\label{pgfl:123}
\end{equation*}
where $\nu^\spin$ is a shifted measure of the de Finetti sequence measure $\nu$ on $[0,1]$. If $\omega$ is a random variable with probability measure $\nu$, then $\nu^\spin$ is the probability measure of $1-2\omega$.
In Proposition \ref{propde:90} it is shown that
\begin{equation}
\mathbb{E}\big [Y^k\big ] = \frac{1}{1 + \frac{\alpha}{1-\alpha}(1-\rho_k)},
\label{Y:66}
\end{equation}
where $\rho_k$ is the mean spin (\ref{transitionspin:02}).
Then
\begin{equation}
(1-\alpha) G(x,y;\alpha) = \frac{1}{2^N}\Big \{1 + \sum_{k=1}^N\mathbb{E}\big [Y^k\big ]{N\choose k}Q_k(\|y-x\|;N,\frac{1}{2})\Big \},
\label{yg:000}
\end{equation}
by substituting (\ref{Y:66}) in (\ref{transition:a01}).
 From (\ref{yg:000}) and (\ref{kgf:001}) in this model,
\begin{equation}
(1-\alpha)G(x,y;\alpha)	
=  \mathbb{E}\Big [ 
\Big (\frac{1}{2}-\frac{Y}{2}\Big )^{\|y-x\|}\Big (\frac{1}{2}+\frac{Y}{2}\Big )^{N-\|y-x\|}\Big ].
\label{introeigen:00}
\end{equation}
Proposition \ref{propkilled:00} finds the joint Laplace transform of $-\log |Y|$ and the probability of a positive/negative sign of $Y$. If $\nu^\spin(d\xi)$ is symmetric about 
$\xi=0$, (\ $\nu(d\omega)$ is symmetric about $\omega = \frac{1}{2}$), then the sign of $Y$ is independent of $|Y|$ and equal to $\pm 1$ with probability $\frac{1}{2}$. 
In Example \ref{Betaexample:00} we work through a model with $\nu^\spin$ symmetric about zero when $2\nu^\spin$ is a Beta$(a,b)$ measure on $(0,1)$ for $\xi > 0$.
In Proposition \ref{deF:340} a strong representation (\ref{exchcirc:00ab})
is found for the Gaussian field $(g_x)$. $G[f]$ is infinitely divisible. The product of points in a point process with \pgfl $G[f]^{\frac{1}{2}}$, with notation $Y_{\frac{1}{2}}$, and the spin variables (\ref{kspin:123}) 
 are important in the representation.
In the de Finetti model the Laplace transform of $-\log |Y|$ is shown to be
\begin{equation*}
 \frac{1}{1 + \frac{\alpha}{1-\alpha}\Big (
\int_\spin \big (1-|\xi|^\theta\big )\nu^\spin(d\xi)\Big )}.
\label{intro:55}
\end{equation*}
In Subsection \ref{subsection:level} 
level set sums from the origin 
$
\vartheta_v^{(N)}
= \sum_{y:\|y\|=v}g_y^{(N)},\ v\in \mathbb{R}_+,
$
are studied.
In a central limit theorem for the index, $\sqrt{N}2^{-(1+\frac{N}{2})}\vartheta^{(N)}_{v_N}$ with $v_N = \lfloor \frac{N}{2}+ \frac{\sqrt{N}}{2}t\rfloor$
is shown to converge weakly as $N\to \infty$ to a Gaussian process
$(\varkappa_t)_{t\in \mathbb{R}}$ where the covariance function is a bivariate normal density with the correlation mixed by a distribution on $[-1,1]$. 
A strong representation of $\varkappa_t$ is found. A complex Gaussian field $\widehat{\varkappa}_\theta = \int_{\mathbb{R}} e^{i\theta t}\varkappa_t dt$ is defined. The covariance function of $\widehat{\varkappa}_\theta$ and a strong representation are found. The covariance function is much simpler than that of $\varkappa_t $.
There is an inversion formula 
$
\varkappa_t = \frac{1}{2\pi}\int_{\mathbb{R}} e^{-i\theta t}\widehat{\varkappa}_\theta d\theta
$.
 The limit results and the complex transform extend those of \cite{CG2025} for the simple hypercube random walk.
 \section{Random walk examples} \label{randomwalkexamples}
\begin{enumerate}[label=(\alph*)]
\item[]
\item \label{item:00}
Let $(Z[k], k \in [N])$ be a set of random variables in $\{0,1\}$ from a collection
$(Z[k], k \in \mathbb{Z}_+)$ not depending on $N$, and also suppose that $\alpha$ does not depend on $N$.
Denote
\begin{equation*}
b_A = \frac{\alpha}{1-\alpha}(1-\rho_A),\quad\rho_A = \mathbb{E}\Big [\prod_{j\in A}(-1)^{Z[j]}\Big ].
\label{bAdef:00}
\end{equation*}
If $b_A$ only depends on $|A|$ write $b_{|A|} \equiv b_A$.
It is useful to include $\rho_\emptyset = 1,\ b_\emptyset = 0$.
\item \label{item:01}
 \ref{item:00} is naturally satisfied by a de Finetti sequence with measure $\nu$.
Then, with $|A|=k$, 
\begin{align}
\rho_k &= \mathbb{E}\big [(-1)^{Z[1]+\cdots +Z[k]}\big ]\nonumber \\
& = \int_{[0,1]}(1-2\omega)^k\nu(d\omega)
\label{deF:101}\\
b_A &= \frac{\alpha}{1-\alpha}\int_{[0,1]}\big (1 - (1-2\omega)^k\big )\nu(d\omega).
\nonumber
\end{align}
The integral calculation in (\ref{deF:101}) follows from noting that the \pgf of a Binomial$(k,\omega)$ random variable is $(1+\omega(s-1))^k$, then choosing $s=-1$ to obtain
$(1-2\omega)^k$.\\
In a particular case when $\nu = \delta_{\{p\}}$ the entries of $Z$ are \iid  Bernoulli$(p)$ and 
\[
b_A =  \frac{\alpha}{1-\alpha}\big (1 - (1-2p)^k\big ).
\]
\item \label{item:02} In the simple random walk 
by symmetry
\begin{align*}
\rho_A &= \mathbb{E}\big [(-1)^{Z[1]+\cdots + Z[|A|]}\big ] \nonumber\\
 &= -1 \times \frac{|A|}{N} + 1 \times \Big ( 1 -  \frac{|A|}{N}\Big )
= 1 - \frac{2|A|}{N}.
\label{simplecalc:00}
\end{align*}
This calculation can be easily extended to a model where $M$ of $N$ entries in $Z$ are unity and $N-M$ are zero. If $M=2$
\[
\rho_A = 1 - \frac{2|A|(N-|A|)}{N(N-1)}.
\]
For $M\geq 1$
\begin{align}
\rho_A &= \sum_{j\geq 0}(-1)^j
\frac{
{M\choose j}{N-M\choose |A| -j}
}
{
{N\choose |A|}
}\label{Mcase:00}\\
&= Q_M\big (|A|;N,\frac{1}{2}\big ) = Q_{|A|}\big (M;N,\frac{1}{2}\big ).
\label{Mcase:01}
\end{align}
Equation (\ref{Mcase:00}) is found by considering the number of entries $j$ from $A$ which fall in the $M$ entries which are equal to unity. Equation (\ref{Mcase:01}) follows from the form of the generating function
(\ref{kgf:001}) as a convolution.
If $M=\lfloor \omega N\rfloor $,
 $Q_{|A|}\big (M;N,\frac{1}{2}\big ) \to \big ( 1 - 2\omega \big )^{|A|}$ as $N \to \infty$ .
 In the limit the entries of $Z$ are effectively \iid Bernoulli$(\omega)$.
 A limit is
 \[
\lim_{N\to \infty} b_A = \frac{\alpha}{1-\alpha}\big (1 - (1- 2\omega)^{|A|}\big ).
\]

\item \label{item:03}
In the simple random walk take $\alpha_N= 1 - \frac{\gamma}{N}$. 
Then
\[
b_A = \lim_{N\to \infty}\frac{1-\frac{\gamma}{N}}{\frac{\gamma}{N}}\frac{2|A|}{N}
= \frac{2|A|}{\gamma}.
\]
The same limit holds when two entries are chosen at random to change.

\cite{CG2025} use the idea of the killing rate tending to zero  to obtain a limit theorem for level set sums in a Gaussian field.  In this example the limit Gaussian field is well defined on ${\cal V}_\infty$ but the random walk is not well defined on ${\cal V}_\infty$.
\item \label{item:04}
$Z$ can be chosen so that in each transition one entry in $X_{t+1}$ becomes Bernoulli$(\frac{1}{2})$ distributed and independent of the other entries. To achieve this choose $K_{t+1}\in [N]$ and let 
$
\mathbb{P}\big (Z[K_{t+1}]=0\big)= \frac{1}{2},\  \mathbb{P}\big (Z[K_{t+1}]=1\big)= \frac{1}{2}$.
Then in a transition
\begin{align*}
&\mathbb{P}\big (X_{t+1}[K_{t+1}]=0\big ) = 
\mathbb{P}\big (X_{t}[K_{t+1}]=0\big )\mathbb{P}\big (Z[K_{t+1}]=0\big)\\
&~~~~~~~~~~~~~~~~~~~~~~~~~+ \mathbb{P}\big (X_{t}[K_{t+1}]=1\big ) \mathbb{P}\big (Z[K_{t+1}]=1\big) = \frac{1}{2}
\end{align*}
and $X_{t+1}[j] = X_t[j]$ if $j\ne K_{t+1}$. $K_{t+1}$ is a random index chosen uniformly from $[N]$ with replacement over time.
The probability that exactly $t$ transitions are needed so $X_t$ has $N$ independent Bernoulli$(\frac{1}{2})$ entries is the classical coupon collectors problem, with solution 
\[
\frac{{\begin{Bmatrix} t-1\\[-3.25pt]N-1 \end{Bmatrix} N!}}{N^t}
\]
where ${\scriptsize \begin{Bmatrix} a\\[-3.25pt] b \end{Bmatrix}}$, $b\geq a \in \mathbb{N}$ are Stirling numbers of the second kind; which count the number of ways to partition a set of $a$ labelled objects into $b$ nonempty unlabelled subsets.  An explicit form is
\[
\begin{Bmatrix} a\\[-3.25pt] b \end{Bmatrix} =
\sum_{j=0}^b\frac{(-1)^{b-j}j^a}{(b-j)!j!}.
\]
 Using a generic notation for $Z,K$, 
\[
\prod_{j\in A}(-1)^{Z[j]} =
\begin{cases}
1&K \not\in A,\\
-1&K \in A
\end{cases}
\]
Therefore
\[
\rho_A = -\frac{|A|}{N} + 1 - \frac{|A|}{N} = 1 - 2\frac{|A|}{N}.
\]
The mechanism is different from Example (c), but the probability distribution of $(X_t)$ is the same.
\item \label{item:05} In a de Finetti sequence with a measure $\nu_\alpha$  suppose $\alpha \to 1$ and $\nu_\alpha \to \delta_0$ such that $\frac{\alpha}{1-\alpha}\nu_\alpha \to \nu_*$, which is a non-negative measure.
An Example is when $\nu_\alpha$ is a Beta$(a_\alpha,\kappa)$ measure with $a_\alpha \to 0$ and
$\frac{a_\alpha}{1-\alpha}\to \kappa < \infty$. Then
\begin{align}
\frac{\alpha}{1-\alpha}\nu_\alpha(d\omega) &= 
\frac{\alpha}{1-\alpha}\frac{\Gamma(a_\alpha+\kappa)}{\Gamma(a_\alpha)\Gamma(\kappa)}
\omega^{a_\alpha-1}(1-\omega)^{\kappa-1}d\omega\nonumber \\
&= \frac{\alpha a_\alpha}{1-\alpha}\frac{\Gamma(a_\alpha+\kappa)}{\Gamma(a_\alpha+1)\Gamma(\kappa)}
\omega^{a_\alpha-1}(1-\omega)^{\kappa-1}d\omega\nonumber \\
& \to \kappa \omega^{-1}(1-\omega)^{\kappa-1}d\omega.
\label{limita:00}
\end{align}
$\nu_*$ is not a finite measure because of the singularity at zero, but this does not affect
the limit form
\begin{align}
b_A &= \kappa\int_0^1(1-(1-2\omega)^{|A|})\omega^{-1}(1-\omega)^{\kappa -1}d\omega\nonumber\\
&= \kappa\sum_{j=1}^{|A|}(-1)^{j+1}\frac{2^j}{j}\cdot \frac{|A|_{[j]}}{\kappa_{(j)}}.
\nonumber
\end{align}
Notation is that for $a\in \mathbb{R}$, $j \in \mathbb{N}$, 
$a_{[j]} = a(a-1)\cdots (a-j+1)$, $a_{(j)} = a(a+1)\cdots (a+j-1)$.
Equation (\ref{limita:00}) is the first moment measure in a Poisson Dirichlet point process ${\cal PD}(\kappa)$. $b_A$ is the mean number of points in a sample of $|A|$ from the points which have an odd number of representatives. Denoting the points by $(u_{(j)})$,
$\sum_{j=1}^\infty u_{(j)} = 1$ and
 $b_A = \mathbb{E}\Big [\sum_{j=1}^\infty \Big ( 1 - (1-2u_{(j)})^{|A|}\big )\Big ]$.
 \item \label{item:06}
 An example where the entries of $Z$ are not exchangeable is when the entries of $Z$ form a homogeneous Markov chain starting with $Z[1]$. 
A calculation of $\rho_A$ is straightforward once the elements of $A$ are ranked $a_{|A|} > a_{|A|-1} > \cdots > a_1$.
\end{enumerate}
\section{Gaussian free fields on a hypercube}\label{Gauss:sec}
Let $(g_x)$ be a zero mean Gaussian field indexed by $x\in {\cal V}_N$ with covariance matrix 
\begin{equation*} 
\text{Cov}\big (g_x,g_y\big ) = (1-\alpha)G(x,y;\alpha),\ x,y \in {\cal V}_N,
\label{definition:00}
\end{equation*}
where the Green function is from the killed process $X_{T_\alpha}$.
Denote 
$\big (G(x,y;\alpha)\big )_{ x,y\in {\cal V}_N  }$ 
as a 
${\cal V}_N\times {\cal V}_N$ matrix with elements $G(x,y;\alpha)$.
The reversible form of the spectral expansion (\ref{transition:a00}) implies that $(1-\alpha)\big (G(x,y;\alpha)\Big )_{x,y\in {\cal V}_N}$ is a positive definite matrix. The Gaussian field $(g_x)$ arising from a simple random walk where $\rho_A = 1 - \frac{2|A|}{N}$ is studied in \cite{CG2025}.

The Gaussian field studied in this paper can be viewed as a discrete GFF constructed from the discrete time process (\ref{rw:000}) with killing at rate $1-\alpha$ in each transition. Suppose a cemetery state $\Delta$ is added to the state space
${\cal V}_N$ to form ${\cal V}_N^+ = {\cal V}_N\cup \partial$, where $\partial = \{\Delta\}$ is now the boundary of the state space ${\cal V}_N^+$. 
Then a transition function including $\partial$ is 
$\mathbb{P}^+\big (y\mid x\big )$, $x,y\in {\cal V}_N^+$ where
$\mathbb{P}^+\big (y\mid x\big )=\alpha\mathbb{P}\big (y\mid x\big )$, $x,y \in {\cal V}_N$,
$\mathbb{P}^+\big (\Delta\mid\Delta\big ) = 1$,  $\mathbb{P}^+\big (\Delta \mid x\big ) = 1-\alpha$, $\mathbb{P}^+\big (x\mid \Delta\big )=0$
 for $x\in {\cal V}_N$. Denote $G^{(-1)}\big (x,y;\alpha\big )$ as the
 $(x,y)^{\text{th}}$ element in the matrix inverse
$ \big (G(x,y;\alpha)\big )^{-1}_{ x,y\in {\cal V}_N  }$.
The Dirichlet energy in this context is
\[
 \frac{1}{2}
 \sum_{x,y \in {\cal V}_N^+}\mathbb{P}^+(y\mid x)(g_x-g_y)^2.
 \]
 The next Proposition is a discrete time version of Theorem 1.8 in \cite{B2016} for a general discrete Gaussian free field arising from a continuous time process.
 \begin{Proposition}\label{GFFEnergy}
 The law of the Gaussian Free Field, with covariance matrix 
 $ \big (G(x,y;\alpha)\big )_{ x,y\in {\cal V}_N  }$,
 has a density proportional to
 \begin{align}
 &\exp \Big \{-\frac{1}{4(1-\alpha)}
 \sum_{x,y \in {\cal V}_N^+}\mathbb{P}^+(y\mid x)(g_x-g_y)^2 
 - \frac{1}{4}\sum_{x\in {\cal V}^+_N}g_x^2
 \Big \}\nonumber\\
 &=  \exp \Big \{ -\frac{1}{2(1-\alpha)}\sum_{x,y\in {\cal V}_N}
 G^{(-1)}\big (x,y;\alpha\big )
 g_xg_y\Big\},
 \label{GFF:00}
 \end{align}
 where we set $g_\Delta=0$.
 \end{Proposition}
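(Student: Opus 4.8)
The plan is to reduce the whole statement to one linear-algebra fact---that the Green matrix is the resolvent $(I-\alpha P)^{-1}$---and then to verify the equality in (\ref{GFF:00}) by a discrete summation-by-parts. Write $P=\big(\mathbb{P}(y\mid x)\big)_{x,y\in\mathcal{V}_N}$ and let $I$ be the identity on $\mathcal{V}_N$. Since $G(x,y;\alpha)=\sum_{t\ge0}\alpha^t\mathbb{P}_t(y\mid x)$ with $\mathbb{P}_t=P^t$, the matrix identity $G=\sum_{t\ge0}\alpha^tP^t=(I-\alpha P)^{-1}$ holds, so $G^{(-1)}(x,y;\alpha)=\delta_{xy}-\alpha\,\mathbb{P}(y\mid x)$. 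The series converges and $G$ is positive definite because the eigenfunctions of $P$ are the spins $\prod_{k\in A}(-1)^{x[k]}$ with eigenvalues $\rho_A\in[-1,1]$ read off from (\ref{transition:00}); hence $I-\alpha P$ has eigenvalues $1-\alpha\rho_A\ge 1-\alpha>0$ for $\alpha\in(0,1)$.

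Given this, the second line of (\ref{GFF:00}) is just the standard centered Gaussian density. The field has covariance $\Sigma=(1-\alpha)G$, so its precision matrix is $\Sigma^{-1}=\tfrac{1}{1-\alpha}(I-\alpha P)=\tfrac{1}{1-\alpha}G^{-1}$, and a multivariate normal law is proportional to $\exp\{-\tfrac12\sum_{x,y}\Sigma^{-1}(x,y)g_xg_y\}$, which is exactly $\exp\{-\tfrac{1}{2(1-\alpha)}\sum_{x,y\in\mathcal{V}_N}G^{(-1)}(x,y;\alpha)g_xg_y\}$.

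It then remains to show the Dirichlet-energy expression equals this. I would split the sum over $\mathcal{V}_N^+$ according to whether the indices lie in $\mathcal{V}_N$ or equal $\Delta$, using $g_\Delta=0$, $\mathbb{P}^+(y\mid x)=\alpha\mathbb{P}(y\mid x)$ and $\mathbb{P}^+(\Delta\mid x)=1-\alpha$ for $x,y\in\mathcal{V}_N$ (the rows from $\Delta$ drop out since $\mathbb{P}^+(y\mid\Delta)=0$ and $g_\Delta=0$). Expanding $(g_x-g_y)^2=g_x^2-2g_xg_y+g_y^2$ and invoking both that $P$ is stochastic and that it is \emph{symmetric}---reversibility with respect to the uniform measure, visible from the symmetry of $\prod_{k\in A}(-1)^{x[k]+y[k]}$ in (\ref{transition:00})---collapses the two squared pieces $\sum_xg_x^2$ and $\sum_yg_y^2$ into one. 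This should give $\tfrac12\sum_{x,y\in\mathcal{V}_N^+}\mathbb{P}^+(y\mid x)(g_x-g_y)^2=\tfrac{1+\alpha}{2}\sum_{x\in\mathcal{V}_N}g_x^2-\alpha\sum_{x,y\in\mathcal{V}_N}\mathbb{P}(y\mid x)g_xg_y$, the factor $\tfrac{1+\alpha}{2}$ already absorbing the killing/boundary contribution $\tfrac{1-\alpha}{2}$.

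The last step is coefficient bookkeeping, which I expect to be the only delicate point. Substituting the expansion, the first exponent term becomes $-\tfrac{1}{2(1-\alpha)}\big(\tfrac{1+\alpha}{2}\sum_xg_x^2-\alpha\sum_{x,y}\mathbb{P}(y\mid x)g_xg_y\big)$; adding the explicit $-\tfrac14\sum_xg_x^2$ and using $\tfrac{1+\alpha}{4(1-\alpha)}+\tfrac14=\tfrac{1}{2(1-\alpha)}$, the coefficient of $\sum_xg_x^2$ becomes $-\tfrac{1}{2(1-\alpha)}$ (the $\delta_{xy}$, i.e.\ identity, part of $G^{-1}$) while the coefficient of $\sum_{x,y}\mathbb{P}(y\mid x)g_xg_y$ is $+\tfrac{\alpha}{2(1-\alpha)}$ (the $-\alpha P$ part). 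The exponent is thus $-\tfrac{1}{2(1-\alpha)}\sum_{x,y}\big(\delta_{xy}-\alpha\mathbb{P}(y\mid x)\big)g_xg_y$, which equals the second line by the resolvent identity. The main obstacle is therefore not conceptual but separating the boundary/killing term cleanly from the interior energy and checking that, once the explicit $-\tfrac14\sum_xg_x^2$ is added, the diagonal normalizes to exactly $\tfrac{1}{2(1-\alpha)}$; the symmetry of $P$ is what guarantees the cross terms reassemble into the symmetric precision matrix.
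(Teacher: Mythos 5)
Your proof is correct, and it rests on the same two pillars as the paper's: the resolvent identity $G^{(-1)}(x,y;\alpha)=\delta_{xy}-\alpha\mathbb{P}(y\mid x)$, and the splitting of the boundary sum (rows from $\Delta$ vanish, the transitions to $\Delta$ contribute $\frac{1-\alpha}{2}\sum_x g_x^2$). The execution differs in two ways that are worth recording. First, you derive $G=(I-\alpha P)^{-1}$ from the Neumann series $\sum_{t\ge 0}\alpha^t P^t$, while the paper argues spectrally (shared spin eigenvectors (\ref{ev:33}) with eigenvalues $1-\alpha\rho_A$ and $(1-\alpha\rho_A)^{-1}$); both are valid, yours being slightly more elementary, and you still invoke the spectral picture where it is genuinely needed, namely for positive definiteness. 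Second, and more substantively, you expand the Dirichlet energy directly into
\begin{equation*}
\tfrac{1}{2}\sum_{x,y\in\mathcal{V}_N^+}\mathbb{P}^+(y\mid x)(g_x-g_y)^2
=\tfrac{1+\alpha}{2}\sum_{x\in\mathcal{V}_N}g_x^2-\alpha\sum_{x,y\in\mathcal{V}_N}\mathbb{P}(y\mid x)g_xg_y
\end{equation*}
(using row-stochasticity and symmetry of $P$) \emph{before} identifying the precision matrix, whereas the paper first substitutes $G^{(-1)}(x,y;\alpha)$ for $\delta_{xy}-\alpha\mathbb{P}(y\mid x)$ inside the difference form and expands afterwards. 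Your ordering is the cleaner one: in the paper's expansion the diagonal entries are mishandled --- since $\sum_{y\ne x}G^{(-1)}(x,y;\alpha)=(1-\alpha)-G^{(-1)}(x,x;\alpha)$, the lines (\ref{GFF:01})--(\ref{GFF:02}) end with the off-diagonal sum $\sum_{x\ne y}G^{(-1)}(x,y;\alpha)g_xg_y$ where the full sum $\sum_{x,y}$ is what is required to reach (\ref{GFF:00}). Your computation, which never splits off the diagonal, confirms that the proposition as stated (with the full sum) is the correct identity; so your proof is not only correct but repairs a bookkeeping slip in the paper's own intermediate display. The only criticism is stylistic: the hedged phrasing (``this should give'', ``I expect'') is unnecessary, as every displayed identity in your argument is exact and can be asserted outright.
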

 \begin{proof}
 We simplify the Dirichlet energy.
  \begin{align}
 &\frac{1}{2}
 \sum_{x,y \in {\cal V}_N^+}\mathbb{P}^+(y\mid x)(g_x-g_y)^2\nonumber\\
 &=\frac{1}{2}
\sum_{x,y \in {\cal V}_N}\alpha\mathbb{P}(y\mid x)(g_x-g_y)^2+\frac{1}{2}(1-\alpha)\sum_{x\in {\cal V}_N}g_x^2\nonumber\\
 &= -\frac{1}{2}\sum_{x, y \in {\cal V}_N}
 \Big (\delta_{xy} - \alpha\mathbb{P}(y\mid x)\Big )(g_x-g_y)^2+\frac{1}{2}(1-\alpha)\sum_{x\in {\cal V}_N}g_x^2\nonumber\\
 &= -\frac{1}{2}\sum_{x,y \in {\cal V}_N}
 G^{(-1)}\big (x,y;\alpha\big )(g_x-g_y)^2
 +\frac{1}{2}(1-\alpha)\sum_{x\in {\cal V}_N}g_x^2\label{GFF:01}\\
 &=  \sum_{x,y\in {\cal V}_N,\ x \ne y}G^{(-1)}\big (x,y;\alpha\big )g_xg_y
 - \sum_{x\in {\cal V}_N}\Big (1-\alpha
-\frac{1}{2}(1-\alpha)\Big )g_x^2\nonumber\\
 &=  \sum_{x,y \in {\cal V}_N,\ x\ne y}G^{(-1)}\big (x,y;\alpha\big )g_xg_y
 - \frac{1}{2}(1-\alpha)\sum_{x\in {\cal V}_N}g_x^2
 \label{GFF:02}
 \end{align}
 Line (\ref{GFF:01}) follows because $\Big (I - \alpha\mathbb{P}(y\mid x)\Big )_{x,y\in {\cal V}_N}$ and 
 $ \Big (G\big (x,y;\alpha\big )\Big )_{x,y\in {\cal V}_N}$ have the same eigenvectors 
 \begin{equation}
 \frac{1}{2^{N/2}} \prod_{k\in A}(-1)^{x[k]},\ A \subseteq [N],\ x\in {\cal V}_N,
 \label{ev:33}
 \end{equation}
 and respective eigenvalues 
 \begin{equation*}
 1-\alpha \rho_A,\ \frac{1}{1 - \alpha\rho_A},\text{~where~} A\subseteq [N].
 \end{equation*}
 This is related to Proposition 1.2 in \cite{B2016}. 
 Denote the matrix $P^+ = \big (\mathbb{P}^+(y\mid x)\big )_{x,y \in {\cal V}_d^+}$.
 The rate matrix $\widehat{Q}=P^+- I$ restricted to ${\cal V}_d\times {\cal V}_d$ satisfies 
 $(-\widehat{Q})^{-1}_{x y}=G(x,y;\alpha)$, $x,y \in {\cal V}_d$.\\
 When $A=\emptyset$ the eigenvector is  
 $2^{-N/2}, x \in {\cal V}_N$ and $\rho_\emptyset = 1$.
 Multiply (\ref{GFF:02}) by $-\frac{1}{2(1-\alpha)}$ to obtain (\ref{GFF:00}),
 noting cancellation with the term $\frac{1}{4}\sum_{x\in {\cal V}_N}g_x^2$ to obtain the second line.
 \end{proof}
 Knowing a diagonal eigenfunction expansion for $(1-\alpha)G(x,y;\alpha)$ it is straightforward to obtain a strong decomposition of $(g_x)$ by checking the covariance matrix. This is analogous to Theorem 1.7 in \cite{B2016}. In their expansion
\[
G(x,y) = \sum_{m=1}^N\frac{1}{\lambda_m}e_m(x)e_m(y),
\]
with $x,y$ in the appropriate space, our analog of 
$\lambda_m^{-1}$ is $(1+\frac{\alpha}{1-\alpha}\rho_A)^{-1}$, and $e_m(x)$ has an analog of (\ref{ev:33})  for $A\subseteq [N]$. 
\begin{Proposition}\label{prop:0}
A strong decomposition of $(g_x)$ into a linear form of independent N$(0,1)$ random variables $(g_A)_{A \subseteq [N]}$ is, 
for all $x\in {\cal V}_N$,
\begin{equation}
g_x = \frac{1}{2^{N/2}}\Big \{g_\emptyset 
+ \sum_{A\subseteq [N], A \ne \emptyset} \frac{1}{\sqrt{1 + \frac{\alpha}{1-\alpha}(1-\rho_A)}}
\cdot \prod_{j\in A}(-1)^{x[j]}\cdot g_A\Big \}
 \label{decomposition:a00}.\\
\end{equation}
If $Z$ has exchangeable entries then
\begin{equation}
g_x
=  \frac{1}{2^{N/2}}\Big \{g_\emptyset +
\sum_{k=1}^{N}
 \frac {1}{\sqrt{1 + \frac{\alpha}{1-\alpha}(1-\rho_k)}}
\cdot S_k\big (x;[N],(g_A)\big )
\Big \}.
\label{exchcirc:00a}
\end{equation}
Recall that $\rho_A = \mathbb{E}\big [\prod_{j\in A}(-1)^{Z[j]}\big ]$
and $S_k\big (x;[N],(g_A)\big )$ is a Gaussian $k$-spin defined in (\ref{kspin:123}).
\end{Proposition}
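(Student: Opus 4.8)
The plan is to prove the \emph{strong} decomposition by constructing the variables $(g_A)$ explicitly as linear functionals of the field $(g_x)$ — namely, by inverting the orthonormal eigenvector transform — and then showing that the claimed identity (\ref{decomposition:a00}) holds pathwise while the constructed $(g_A)$ are genuinely i.i.d.\ standard normal. Write $c_A = \big(1 + \tfrac{\alpha}{1-\alpha}(1-\rho_A)\big)^{-1/2}$, so that $c_\emptyset = 1$ (since $\rho_\emptyset = 1$) and each $c_A > 0$ because $\rho_A \le 1$ forces the denominator to be at least $1$. Recalling the eigenvectors $v_A(x) = 2^{-N/2}\prod_{k\in A}(-1)^{x[k]}$ from (\ref{ev:33}), I would \emph{define}
\begin{equation*}
g_A := c_A^{-1}\sum_{x\in {\cal V}_N} v_A(x)\,g_x,\qquad A\subseteq [N].
\end{equation*}
This is the crucial difference from merely checking covariances: the $g_A$ are built out of the field itself, so the decomposition becomes an a.s.\ identity rather than an equality in distribution.

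First I would record the two linear-algebra facts needed. The vectors $\{v_A\}_{A\subseteq[N]}$ form an orthonormal basis of $\mathbb{R}^{2^N}$ under the Euclidean inner product $\langle u,w\rangle = \sum_{x\in{\cal V}_N} u(x)w(x)$; orthonormality $\langle v_A,v_B\rangle = \delta_{AB}$ is character orthogonality on $(\mathbb{Z}/2)^N$. Second, reading off (\ref{transition:a00}) and using $v_A(x)v_A(y) = 2^{-N}\prod_{k\in A}(-1)^{x[k]+y[k]}$ together with $\lambda_\emptyset = 1$, the covariance matrix $\Sigma = \big((1-\alpha)G(x,y;\alpha)\big)_{x,y}$ diagonalizes as $\Sigma = \sum_A \lambda_A\, v_A v_A^{\top}$ with $\lambda_A = c_A^2$. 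In particular $\Sigma v_B = \lambda_B v_B$.

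Next I would verify the distribution of the constructed variables. The $g_A$ are finite linear combinations of the jointly Gaussian $g_x$, hence zero-mean jointly Gaussian. Their covariances compute directly:
\begin{equation*}
\operatorname{Cov}(g_A,g_B) = c_A^{-1}c_B^{-1}\, v_A^{\top}\Sigma\, v_B = c_A^{-1}c_B^{-1}\lambda_B\,\langle v_A,v_B\rangle = c_A^{-2}\lambda_A\,\delta_{AB} = \delta_{AB},
\end{equation*}
using $\Sigma v_B = \lambda_B v_B$, orthonormality, and $\lambda_A = c_A^2$. Being jointly Gaussian and uncorrelated with unit variance, the $(g_A)$ are i.i.d.\ N$(0,1)$. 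The identity itself is then immediate from the orthonormal-basis expansion $g = \sum_A \langle v_A,g\rangle v_A = \sum_A c_A g_A\, v_A$, which read coordinate-wise and with $A=\emptyset$ separated out (where $c_\emptyset=1$) is exactly (\ref{decomposition:a00}); this holds surely, for each realization. Finally, under exchangeability $\rho_A$ depends only on $|A|=k$, so $c_A\equiv c_k$ is constant over sets of size $k$, and grouping terms gives $\sum_{|A|=k} c_A \prod_{j\in A}(-1)^{x[j]}g_A = c_k\, S_k(x;[N],(g_A))$ by definition (\ref{kspin:123}), yielding (\ref{exchcirc:00a}).

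I do not expect a serious obstacle; the subtle point — and the one the weak argument misses — is conceptual rather than computational: one must \emph{invert} the transform to obtain the $g_A$ from the field so that the equality is pathwise, and then the content reduces to (i) orthonormality of the eigenbasis, (ii) the covariance diagonalization already available from (\ref{transition:a00}), and (iii) the Gaussian fact that uncorrelated implies independent. The only point requiring a line of care is the positivity $c_A>0$ guaranteeing the factors $c_A^{-1}$ are well defined, which follows from $\rho_A\le 1$ and $\alpha\in(0,1)$.
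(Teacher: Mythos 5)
Your proof is correct, but it runs in the opposite direction from the paper's. The paper starts from i.i.d.\ N$(0,1)$ variables $(g_A)$, defines the right-hand side of (\ref{decomposition:a00}), and verifies via orthogonality that its covariance equals $(1-\alpha)G(x,y;\alpha)$ as given by (\ref{transition:a00}); since a centered Gaussian field is determined in law by its covariance, that suffices to exhibit the representation. You instead take the field $(g_x)$ as given and \emph{invert} the orthonormal character transform to construct $g_A = c_A^{-1}\sum_x v_A(x)g_x$, check $\operatorname{Cov}(g_A,g_B)=\delta_{AB}$ from the diagonalization $\Sigma=\sum_A c_A^2\,v_Av_A^{\top}$, and recover (\ref{decomposition:a00}) as a surely-holding basis expansion. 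The computational core (character orthonormality on $\{0,1\}^N$ and the spectral form of the Green function) is identical, but your version buys something extra: the $(g_A)$ are explicit measurable functionals of the field, so the identity holds pathwise for \emph{any} version of $(g_x)$, not merely as an equality of laws under a constructed coupling; this also makes the coefficients $c_A$ and the variables $g_A$ canonically determined by the field. The paper's forward check is shorter and is all that is needed for the uses made of the proposition (and for the exchangeable grouping into the spins (\ref{kspin:123}) yielding (\ref{exchcirc:00a}), where your argument and the paper's coincide). Your care with $c_A>0$, via $\rho_A\le 1$ and $\alpha\in(0,1)$, is the right point to flag for the inversion to be well defined.
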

\begin{proof}
It is straightforward to show that the covariance function of $(g_x)$ agrees with $(1-\alpha)G(x,y)$ 
in (\ref{transition:a00}) by using orthogonality of $(g_A)$. This is enough to show the representation.
 In the exchangeable case $\rho_A = \rho_{|A|}$ and (\ref{exchcirc:00a}) follows by collecting terms in 
(\ref{decomposition:a00}) where $|A|=k$ to form $S_k\big (x;[N],(g_A)\big )$.
\end{proof}
\begin{Corollary} \label{ziidb}
If $Z$ has \iid Bernoulli$(1/2)$ entries, then the random walk $(X_t)$ mixes in one step.
A representation for the Gaussian field is 
\[
g_x 
= \sqrt{1-\alpha}\ g^\ast_x +  \sqrt{ \frac{\alpha}{2^N}}\ g_\emptyset^\ast
\]
where $(g^\ast_x)$  and $g^\ast_\emptyset$ are independent $N(0,1)$ random variables.
\end{Corollary}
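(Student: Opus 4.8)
The plan is to reduce everything to the spectral coefficients $\rho_A$ and then match covariances, exploiting the fact that both $(g_x)$ and the proposed representation are centered, jointly Gaussian objects. First I would compute $\rho_A$ for i.i.d.\ Bernoulli$(1/2)$ increments. Since the entries of $Z$ are independent and $\mathbb{E}\big[(-1)^{Z[k]}\big] = \tfrac12 - \tfrac12 = 0$, the product formula $\rho_A = \prod_{k\in A}\mathbb{E}\big[(-1)^{Z[k]}\big]$ gives $\rho_A = 0$ for every $A\ne\emptyset$ (with $\rho_\emptyset=1$). Substituting into the transition expansion (\ref{transition:00}) collapses it to $\mathbb{P}(y\mid x) = 2^{-N}$ for all $x,y$, which is exactly the uniform stationary law; hence the chain reaches stationarity after a single transition, i.e.\ it mixes in one step.

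Next I would read off the covariance. Setting $\rho_A=0$ in the Green-function expansion (\ref{transition:a00}), each nontrivial eigenvalue $\big(1+\tfrac{\alpha}{1-\alpha}(1-\rho_A)\big)^{-1}$ becomes $(1+\tfrac{\alpha}{1-\alpha})^{-1}=1-\alpha$, so
\begin{equation*}
\text{Cov}(g_x,g_y) = (1-\alpha)G(x,y;\alpha) = \frac{1}{2^N}\Big\{1 + (1-\alpha)\sum_{A\subseteq[N],\,A\ne\emptyset}\prod_{k\in A}(-1)^{x[k]+y[k]}\Big\}.
\end{equation*}
Using the orthonormality of the spins, $2^{-N}\sum_{A\subseteq[N]}\prod_{k\in A}(-1)^{x[k]+y[k]} = \delta_{xy}$, to evaluate the full sum and then removing the $A=\emptyset$ contribution, so that $\sum_{A\ne\emptyset}\prod_{k\in A}(-1)^{x[k]+y[k]} = 2^N\delta_{xy}-1$, the bracket simplifies to the clean form
\begin{equation*}
\text{Cov}(g_x,g_y) = (1-\alpha)\,\delta_{xy} + \frac{\alpha}{2^N}.
\end{equation*}

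Finally I would verify that the stated representation reproduces exactly this covariance. With $(g^\ast_x)_{x\in{\cal V}_N}$ i.i.d.\ $N(0,1)$ and $g^\ast_\emptyset$ an independent $N(0,1)$,
\begin{equation*}
\text{Cov}\Big(\sqrt{1-\alpha}\,g^\ast_x + \sqrt{\tfrac{\alpha}{2^N}}\,g^\ast_\emptyset,\ \sqrt{1-\alpha}\,g^\ast_y + \sqrt{\tfrac{\alpha}{2^N}}\,g^\ast_\emptyset\Big) = (1-\alpha)\,\delta_{xy} + \frac{\alpha}{2^N},
\end{equation*}
the cross terms vanishing because $g^\ast_\emptyset$ is independent of the family $(g^\ast_x)$. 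Since both $(g_x)$ and the right-hand side are centered Gaussian fields with identical covariance, they are equal in law, which is the asserted representation.

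I expect the only genuinely delicate point to be conceptual rather than computational: the representation is a fresh coupling, not a literal regrouping of the decomposition (\ref{decomposition:a00}). In that decomposition the constant mode $g_\emptyset$ is shared by every coordinate, so the i.i.d.\ field $2^{-N/2}\sum_{A}\prod_{j\in A}(-1)^{x[j]}g_A$ that arises naturally already contains $g_\emptyset$ and is therefore not independent of it; one cannot simply peel off an independent $g^\ast_\emptyset$ pathwise. The matching-covariance argument sidesteps this entanglement and is the cleanest justification.
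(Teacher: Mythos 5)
Your proof is correct, and its skeleton is the paper's: compute $\rho_A=0$ for $A\ne\emptyset$, deduce one-step mixing from (\ref{transition:00}), identify the covariance $(1-\alpha)\delta_{xy}+\alpha 2^{-N}$, and conclude by matching covariances of centered Gaussian fields. The one step where you genuinely diverge is the derivation of that covariance. The paper stays in the time domain: homogeneity plus one-step mixing give $\mathbb{P}_t(y\mid x)=2^{-N}$ for all $t\geq 1$ while $\mathbb{P}_0(y\mid x)=\delta_{xy}$, so the geometric sum $\sum_{t\geq 0}(1-\alpha)\alpha^t\,\mathbb{P}_t(y\mid x)$ splits at once into $(1-\alpha)\delta_{xy}+\alpha 2^{-N}$, and the two terms of the representation acquire a transparent probabilistic meaning (killed before moving versus killed after the chain has mixed). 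You instead substitute $\rho_A=0$ into the spectral expansion (\ref{transition:a00}), where every nontrivial eigenvalue degenerates to $1-\alpha$, and collapse the spin sum with the orthonormality identity $\sum_{A\subseteq[N]}\prod_{k\in A}(-1)^{x[k]+y[k]}=2^N\delta_{xy}$. This is equally rigorous, just more algebraic, and it makes visible that the representation exists precisely because the spectrum is totally degenerate. Your closing observation is also worth keeping and is not made explicit in the paper: the identity is an equality in law obtained by covariance matching, not a pathwise regrouping of (\ref{decomposition:a00}), since the mode $g_\emptyset$ there is shared by, and hence not independent of, the remaining part of each coordinate.
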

\begin{proof}
If the entries of $Z$ are \iid Bernoulli$(1/2)$ then for $A\subseteq [N]$, $A\ne \emptyset$,
\[
\rho_A = \mathbb{E}\big (\prod_{j\in A}(-1)^{Z[j]}\big ) = 0
\]
so for $x,y \in {\cal V}_N$ 
\[
\mathbb{P}\big (X_{1} = y\mid X_0 = x\big ) = \frac{1}{2^N}
\]
from (\ref{transition:00}), showing mixing in one step. The Markov chain is homogeneous so 
\[
\mathbb{P}\big (X_{t+1} = y\mid X_t = x\big ) = \frac{1}{2^N}
\]
for all $t\in \mathbb{N}$.
Therefore
\begin{equation*}
(1-\alpha)G(x,y;\alpha) = (1-\alpha)\delta_{xy}  + \frac{\alpha}{2^N}
\label{temp:465}
\end{equation*}
and the representation for $g_x$ follows.
\end{proof}

\begin{Corollary}\label{symmetrycorr:000}
Let $(\vartheta_{v})$ be a Gaussian field of level sets from the origin, defined by
$\vartheta_{v}
= \sum_{\|x\|=v}g_{x}$, $v = 0,1,\ldots, N$.
 Then in the exchangeable case
\begin{equation}
\vartheta_{v} = {N\choose v}
 \frac{1}{2^{N/2}}\Big \{g_\emptyset +
\sum_{k=1}^{N}
 \frac {1}{\sqrt{1 + \frac{\alpha}{1-\alpha}(1-\rho_k)}}
 \sqrt{{N\choose k}}Q_k\big (v;N,\frac{1}{2}\big )\cdot \zeta_k\Big \},
\label{symmetry:001}
\end{equation}
where 
\begin{equation}
\zeta_k = \frac{1}{\sqrt{{N\choose k}}}\sum_{A\subseteq [N];|A|=k}g_A\
\label{zeta:00}
\end{equation}
are independent N$(0,1)$ random variables. 
\end{Corollary}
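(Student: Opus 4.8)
The plan is to start from the exchangeable strong decomposition (\ref{exchcirc:00a}) and sum it termwise over the level set $\{x\in{\cal V}_N:\|x\|=v\}$, interchanging the sum over $x$ with the sum over subsets $A$. The only genuinely nontrivial ingredient is the evaluation of the spin sum $\sum_{\|x\|=v}\prod_{j\in A}(-1)^{x[j]}$ over a level set; everything else is linearity and bookkeeping.

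Concretely, I would first write
\[
\vartheta_v = \sum_{\|x\|=v}g_x = \frac{1}{2^{N/2}}\Big\{\binom{N}{v}g_\emptyset + \sum_{k=1}^N \frac{1}{\sqrt{1+\frac{\alpha}{1-\alpha}(1-\rho_k)}}\sum_{\|x\|=v}S_k\big(x;[N],(g_A)\big)\Big\},
\]
where the $g_\emptyset$ term picks up a factor $\binom{N}{v}$ since there are $\binom{N}{v}$ vertices in the level set. Using the definition (\ref{kspin:123}) of $S_k$ and interchanging the finite sums gives $\sum_{\|x\|=v}S_k(x;[N],(g_A)) = \sum_{|A|=k}g_A\sum_{\|x\|=v}\prod_{j\in A}(-1)^{x[j]}$, so the whole computation reduces to the inner spin sum.

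The key step is to show that for any fixed $A$ with $|A|=k$,
\[
\sum_{\|x\|=v}\prod_{j\in A}(-1)^{x[j]} = \binom{N}{v}Q_k\big(v;N,\frac{1}{2}\big),
\]
independently of which $A$ of size $k$ is taken. I would prove this by grouping the vertices $x$ according to $i=\sum_{j\in A}x[j]$, the number of ones falling inside $A$: then $\prod_{j\in A}(-1)^{x[j]}=(-1)^i$, and the number of weight-$v$ vectors with exactly $i$ ones in $A$ is $\binom{k}{i}\binom{N-k}{v-i}$. This produces the alternating Vandermonde sum $\sum_i(-1)^i\binom{k}{i}\binom{N-k}{v-i}$, which is the coefficient of $y^v$ in $(1-y)^k(1+y)^{N-k}$. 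Comparing with the Krawtchouk generating function (\ref{kgf:001})---invoked through its self-duality exactly as in the derivation of (\ref{Mcase:00})--(\ref{Mcase:01})---identifies this coefficient as $\binom{N}{v}Q_k(v;N,\frac{1}{2})$. Substituting back and recognizing $\sum_{|A|=k}g_A=\sqrt{\binom{N}{k}}\,\zeta_k$ from (\ref{zeta:00}) yields (\ref{symmetry:001}) directly. Finally I would confirm the distributional claim: each $\zeta_k$ is a normalized sum of $\binom{N}{k}$ independent $N(0,1)$ variables, hence $N(0,1)$, and since spins of distinct sizes involve disjoint families $(g_A)$, the variables $\zeta_1,\dots,\zeta_N$ together with $g_\emptyset$ are mutually independent.

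The main obstacle is the level-set spin evaluation; all remaining manipulations are routine. The cleanest route to that step is the generating-function identification above, which is just discrete Fourier/Krawtchouk duality on the cube, rather than attempting to simplify the alternating binomial sum by hand.
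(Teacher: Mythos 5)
Your proposal is correct and follows essentially the same route as the paper: sum the strong decomposition over the level set, reduce everything to the spin-sum identity $\sum_{\|x\|=v}\prod_{j\in A}(-1)^{x[j]} = \binom{N}{v}Q_k\big(v;N,\frac{1}{2}\big)$, and repackage $\sum_{|A|=k}g_A$ as $\sqrt{\binom{N}{k}}\,\zeta_k$ with the independence and normality checks you give. The only difference is in how that identity is justified --- the paper cites its spin representation (\ref{Qspin:00}) (implicitly using permutation symmetry so the sum is independent of the choice of $A$), while you derive it explicitly via the Vandermonde convolution and the generating function (\ref{kgf:001}) --- which is a more self-contained verification of the same key step, not a different approach.
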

\begin{proof}
The representation (\ref{symmetry:001}) for $\vartheta_{v}$ follows from summing in (\ref{decomposition:a00}) and using the Krawtchouk polynomial expression (\ref{Qspin:00}). Consider a particular $A$ with $|A|=k$. Then
\[
\sum_{\|x\|=v}\frac {1}{\sqrt{1 + \frac{\alpha}{1-\alpha}(1-\rho_k)}}\prod_{j\in A}(-1)^{x[j]}\ g_A
= \frac {1}{\sqrt{1 + \frac{\alpha}{1-\alpha}(1-\rho_k)}}
{N\choose v}Q_k\big (v;N,\frac{1}{2}\big )g_A
\]
Now define $\zeta_k$ by (\ref{zeta:00}). There are ${N\choose k}$ \iid normal terms in the sum definition of $\zeta_k$, so it has a variance of 1. The term $\sqrt{{N\choose k}}$ arising in 
the denominator of $\zeta_k$ is accounted for in multiplication of $Q_k\big (v;N,\frac{1}{2}\big )$.
\end{proof}
\begin{Remark}
Marginally $g_x =^{\cal D}g_{x(0)}$, where $x(0)$ has all entries zero and a decomposition similar to (\ref{decomposition:a00}) 
with 
$\prod_{j\in A}(-1)^{x[j]}=1$.
A major point is that $(g_x)$ are coupled over $(g_A)$ so the spin factor cannot be removed in the joint distribution of $(g_x)$.
\end{Remark}
\begin{Remark}
 We know the covariance structure of the spins as
\begin{align}
\text{\rm Cov}\big (S_j\big (x;[N],(g_A)\big ),S_k\big (y;[N],(g_A)\big )\big )
&= \delta_{jk}\sum_{A\subseteq [N], |A|=k}\ \prod_{l\in A}(-1)^{x[l]+y[l]}\nonumber \\
&= \delta_{jk}{N\choose k}Q_k\big ( \|x-y\|, N,\ \frac{1}{2}\big ),\
j,k\in [N],
\nonumber
\end{align}
noting that  $(-1)^{x[l]+y[l]} = (-1)^{\big (x[l]-y[l] \mod 2 \big )}$ and using (\ref{Qspin:00}).
\end{Remark}
\begin{Example}\label{secondex:00}
In the simple random walk in Section \ref{randomwalkexamples} \ref{item:02} the coefficient of the $k$ spin in (\ref{exchcirc:00a}) is $2^{-\frac{N}{2}}$ times
\begin{equation*}
\frac{1}{\sqrt{
\Big (1 + \frac{\alpha}{1-\alpha}\frac{2k}{N}\Big )
}
}
= \mathbb{E}\big [Y_{\frac{1}{2}}^k\big ],
\label{coeft:99}
\end{equation*}
where
\[
Y_{\frac{1}{2}} = \exp \Big \{-\frac{2\alpha}{N(1-\alpha)} U_{\frac{1}{2}} \Big \},
\]
with $U_{\frac{1}{2}}$ a Gamma$(\frac{1}{2})$ random variable.
Later we find that for de Finetti models in Section \ref{randomwalkexamples} \ref{item:01}, where $\rho_k$  is the $k^\text{\rm th}$ moment of a random variable on $[-1,1]$ in (\ref{deF:101}), there exists a random variable $Y_{\frac{1}{2}}$, depending on the de Finetti measure, such that
\[
 \frac {1}{\sqrt{1 + \frac{\alpha}{1-\alpha}(1-\rho_k)}}=
 \mathbb{E}\big [Y_{\frac{1}{2}}^k\big ].
\]
\end{Example}
A Markov sequence of coupled Gaussian fields 
with increasing dimensions in $N \in \mathbb{N}$ can be defined. 
\begin{Proposition}\label{Markov:000}
Let $x \in {\cal V}_\infty$ and denote $x_N$ as the first $N$ entries in $x$.
Let $Z \in {\cal V}_\infty$ be well defined. Assume that $\alpha$ does not depend on $N$.
Denote ${\cal G}_N = \big (g_{x_N}\big )_{x_N \in {\cal V}_N}$ for $N\in \mathbb{N}$, with
${\cal G}_0 = (g_\emptyset)$. $({\cal G}_N)$ are coupled with the same random variables $(g_A)$ where elements of ${\cal G}_N$ are linear forms in $(g_A)_{A\subseteq [N]}$.
In the construction, for any given $x \in {\cal V}_\infty$,
each Gaussian sequence $(g_{x_N})_{N\in \mathbb{N}}$ is a Markov chain and therefore $\big ({\cal G}_N\big )_{N\in \mathbb{N}}$ is a Markov chain.
Conditional means and covariances are, for $x_N,y_N \in {\cal G}_N$,
\begin{align}
\mathbb{E}\big [g_{{ x}_N}\mid {\cal G}_{N-1}\big ] &= 2^{-1/2}g_{x_{N-1}}\nonumber\\
\text{\rm Cov}\big (g_{{x}_N}, g_{{y}_N}\mid {\cal G}_{N-1}\big )
& = \frac{1}{2^N}\sum_{A\subseteq [N], A \supseteq \{N\}} \frac{1}{1 + \frac{\alpha}{1-\alpha}(1-\rho_A)}
\cdot \prod_{k\in A}(-1)^{{ x}[k]+{y}[k]}
\nonumber
\end{align}
\end{Proposition}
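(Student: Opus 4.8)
The plan is to exploit the explicit strong decomposition (\ref{decomposition:a00}) of each field ${\cal G}_N$, which writes every $g_{x_N}$ as a linear form in the shared \iid normals $(g_A)_{A\subseteq [N]}$. The key step is to split the sum over $A\subseteq[N]$ according to whether the new index $N$ lies in $A$. Writing $c_A=\big(1+\frac{\alpha}{1-\alpha}(1-\rho_A)\big)^{-1/2}$, I would collect the terms with $A\subseteq[N-1]$ (together with the $g_\emptyset$ term) and observe that
\begin{equation*}
\frac{1}{2^{N/2}}\Big\{g_\emptyset+\sum_{\substack{A\subseteq[N-1]\\ A\ne\emptyset}}c_A\prod_{j\in A}(-1)^{x[j]}g_A\Big\}=2^{-1/2}\,g_{x_{N-1}},
\end{equation*}
because the bracketed expression is exactly $2^{(N-1)/2}g_{x_{N-1}}$ by (\ref{decomposition:a00}) applied in dimension $N-1$. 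This yields the one-step recursion
\begin{equation*}
g_{x_N}=2^{-1/2}g_{x_{N-1}}+W_N(x),\qquad W_N(x):=\frac{1}{2^{N/2}}\sum_{A\subseteq[N],\,A\supseteq\{N\}}c_A\prod_{j\in A}(-1)^{x[j]}g_A.
\end{equation*}

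The innovation field $\big(W_N(x)\big)_{x_N\in{\cal V}_N}$ is a linear form in the normals $(g_A)_{A\supseteq\{N\}}$, whose index sets are disjoint from the sets $A\subseteq[N-1]$ generating ${\cal G}_0,\dots,{\cal G}_{N-1}$. Since all $(g_A)$ are independent, $\big(W_N(x)\big)_x$ is independent of $\sigma({\cal G}_0,\dots,{\cal G}_{N-1})$. Hence ${\cal G}_N$ is an affine deterministic function of ${\cal G}_{N-1}$ plus fresh noise independent of the past, which establishes the Markov property of $({\cal G}_N)_{N\in\mathbb{N}}$ directly at the field level; restricting to a single trajectory shows that each scalar sequence $(g_{x_N})_{N\in\mathbb{N}}$ is itself Markov.

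The two conditional moments then follow by reading off the recursion. Conditioning on ${\cal G}_{N-1}$, the term $2^{-1/2}g_{x_{N-1}}$ is measurable while $\mathbb{E}[W_N(x)\mid{\cal G}_{N-1}]=\mathbb{E}[W_N(x)]=0$ by independence and zero mean, giving $\mathbb{E}[g_{x_N}\mid{\cal G}_{N-1}]=2^{-1/2}g_{x_{N-1}}$. For the conditional covariance the measurable part contributes nothing, so it equals the unconditional covariance of the innovations, and orthonormality of $(g_A)$ gives
\begin{equation*}
\text{\rm Cov}\big(g_{x_N},g_{y_N}\mid{\cal G}_{N-1}\big)=\frac{1}{2^N}\sum_{A\subseteq[N],\,A\supseteq\{N\}}c_A^2\prod_{k\in A}(-1)^{x[k]+y[k]},
\end{equation*}
which is the claimed formula since $c_A^2=\big(1+\frac{\alpha}{1-\alpha}(1-\rho_A)\big)^{-1}$.

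The computation is routine; the one point needing care is the independence claim, where everything rests on the coupling through a single family $(g_A)$, so that the normals carrying the index $N$ are genuinely disjoint from, hence independent of, those building the earlier fields. A secondary subtlety is the passage from ``each coordinate sequence is Markov'' to ``the vector field is Markov,'' which is false in general; here it is legitimate only because the stronger field-level relation ${\cal G}_N=F({\cal G}_{N-1},W_N)$ with $W_N$ independent of the past is what is actually proved.
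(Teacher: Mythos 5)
Your proposal is correct and follows essentially the same route as the paper's proof: splitting the strong decomposition (\ref{decomposition:a00}) according to whether $N\in A$, identifying the $A\subseteq[N-1]$ part as $2^{-1/2}g_{x_{N-1}}$, and using independence of $(g_A)_{A\supseteq\{N\}}$ from the earlier fields to get the Markov property and the conditional moments. In fact you spell out the conditional mean and covariance computations that the paper leaves as ``straightforward,'' and you correctly flag that the field-level innovation representation, not the coordinatewise Markov property, is what justifies the conclusion.
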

\begin{proof}
From the decomposition (\ref{decomposition:a00}) in Proposition \ref{prop:0},
\begin{align}
g_{{x}_N} &= 2^{-N/2}\Big \{g_\emptyset 
+ \sum_{A\subseteq [N-1], A \ne \emptyset} \frac{1}{\sqrt{1 + \frac{\alpha}{1-\alpha}(1-\rho_A)}}
\cdot \prod_{k\in A}(-1)^{{x}[k]}\cdot g_A\Big \}\nonumber\\
&+ 2^{-N/2}\sum_{A\subseteq [N], A \supseteq \{N\}} \frac{1}{\sqrt{1 + \frac{\alpha}{1-\alpha}(1-\rho_A)}}
\cdot \prod_{k\in A}(-1)^{{x}[k]}\cdot g_A\Big \}\nonumber\\
&= 2^{-1/2}g_{{ x}_{N-1}}+ 2^{-N/2}\sum_{A\subseteq [N], A \supseteq \{N\}} \frac{1}{\sqrt{1 + \frac{\alpha}{1-\alpha}(1-\rho_A)}}
\cdot \prod_{k\in A}(-1)^{{x}[k]}\cdot g_A.
\nonumber
\end{align}
Now ${\cal G}_{N-1}$ is independent of $(g_A)_{A\supseteq \{N\}}$  showing the Markov structure. The conditional means and covariances are straightforward to calculate.
\end{proof}
\begin{Corollary}
A full triangular decomposition of $g_{{x}_N} \in {\cal G}_N$ into $N+1$ independent sums is
\begin{equation*}
g_{{x}_N} = 2^{-N/2}g_\emptyset+
 2^{-N/2} \sum_{j=1}^N\sum_{A_j} \frac{1}{\sqrt{1 + \frac{\alpha}{1-\alpha}(1-\rho_{A_j})}}
\cdot \prod_{k\in A_j}(-1)^{{x}[k]}\cdot g_{A_j},
\label{triangle:000}
\end{equation*} 
where for $j \in [N]$, $A_j$ are subsets of $[N]$ with maximal element $j$.
\end{Corollary}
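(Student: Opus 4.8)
The plan is to recognise the triangular decomposition as nothing more than a re-indexing of the strong decomposition (\ref{decomposition:a00}) of Proposition \ref{prop:0}, organised so as to mesh with the Markov structure of Proposition \ref{Markov:000}. The central combinatorial observation is that every nonempty subset $A\subseteq[N]$ has a unique maximal element $j=\max A\in[N]$, so the family of nonempty subsets is partitioned by the value of this maximal element; the subsets with $\max A=j$ are precisely those of the form $\{j\}\cup B$ with $B\subseteq[j-1]$. Writing $A_j$ for a generic subset with $\max A_j=j$, this gives the identity $\sum_{A\subseteq[N],\,A\ne\emptyset}=\sum_{j=1}^N\sum_{A_j}$, where the inner sum ranges over all subsets of $[N]$ whose maximal element is $j$.

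First I would substitute this partition directly into (\ref{decomposition:a00}), keeping the $g_\emptyset$ term apart. Since the prefactor $2^{-N/2}$, the coefficient $1/\sqrt{1+\frac{\alpha}{1-\alpha}(1-\rho_A)}$, the spin product $\prod_{k\in A}(-1)^{x[k]}$ and the variable $g_A$ all depend only on the set $A$ and not on how it is labelled, the re-indexing leaves each summand untouched and reproduces the claimed expression verbatim. This simultaneously exhibits the splitting into $N+1$ independent pieces: the $g_\emptyset$ block and one block for each $j\in[N]$. Because the variables $(g_A)$ are i.i.d.\ and the block indexed by $j$ involves only those $g_A$ with $\max A=j$, the blocks involve pairwise disjoint families of the $g_A$ and are therefore mutually independent.

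Alternatively, and more in the spirit of the preceding proposition, I would obtain the result by telescoping the one-step recursion of Proposition \ref{Markov:000}. Iterating $g_{x_N}=2^{-1/2}g_{x_{N-1}}+\Delta_N$, with increment $\Delta_j=2^{-j/2}\sum_{A\subseteq[j],\,A\supseteq\{j\}}c_A\prod_{k\in A}(-1)^{x[k]}g_A$ and $c_A=1/\sqrt{1+\frac{\alpha}{1-\alpha}(1-\rho_A)}$, down to $g_{x_0}=g_\emptyset$ produces $g_{x_N}=2^{-N/2}g_\emptyset+\sum_{j=1}^N 2^{-(N-j)/2}\Delta_j$. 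The only point needing care is the bookkeeping of the accumulated $2^{-1/2}$ factors: the increment created at stage $j$ carries an intrinsic $2^{-j/2}$ and is then multiplied by $2^{-(N-j)/2}$, and since $2^{-j/2}\cdot 2^{-(N-j)/2}=2^{-N/2}$ the net weight on every block collapses to the uniform $2^{-N/2}$. The main (and indeed only) obstacle here is notational: one verifies that the condition $A\supseteq\{j\}$ with $A\subseteq[j]$ appearing in the Markov increment is identical to $\max A=j$, so that the two derivations coincide and deliver the stated $N+1$-term triangular form.
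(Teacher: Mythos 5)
Your proof is correct and is essentially the argument the paper intends (the corollary is left unproved there precisely because it is immediate): partitioning the nonempty subsets of $[N]$ in the decomposition (\ref{decomposition:a00}) according to their maximal element, or equivalently telescoping the one-step recursion $g_{x_N}=2^{-1/2}g_{x_{N-1}}+\Delta_N$ from the proof of Proposition \ref{Markov:000}, with the weights $2^{-j/2}\cdot 2^{-(N-j)/2}=2^{-N/2}$ collapsing as you note. Your verification that the blocks involve pairwise disjoint families of the i.i.d.\ $(g_A)$, hence are independent, completes the claim of $N+1$ independent sums; nothing is missing.
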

\subsection{A centered Gaussian field}\label{GaussCentre:sec}
Define a modified field $(g^\circ_x)$  by centering
$
g^\circ_x = g_x - 2^{-N}\sum_{y\in {\cal V}_N}g_y. 
$
Then the baseline component $g_\phi$ is removed because
$
2^{-N}\sum_{y\in {\cal V}_N}g_y = 2^{-N/2}g_\emptyset
$
and
\begin{equation*}
g^\circ_x = \frac{1}{2^{N/2}}\Big \{
\sum_{A\subseteq [N], A \ne \emptyset}
 \frac{1}{\sqrt{1 + \frac{\alpha}{1-\alpha}(1-\rho_A)} }\cdot
 \prod_{k\in A}(-1)^{x[k]}\cdot g_A\Big \}
 \label{decomposition:a000}.
\end{equation*}
$(g_x)$ has positive covariances, however $(g^\circ_x)$ can have positive or negative covariances with
\[
\text{\rm Cov}(g^\circ_x,g^\circ_y) = \text{Cov}(g_x,g_y) - \frac{1}{2^N}.
\]
 Often the covariance function is non-negative, however \cite{DE2002} construct Gaussian Fields where the covariance function can be negative.
 
Independence properties of the field are clearly seen from $(g^\circ_x)$ in Proposition \ref{prop:2}. Recall the notation that will be used of
$x(C_x) = (x[k];k \in C_x \subseteq [N])$.
\begin{Proposition}\label{prop:2}
For $C_x\subseteq [N]$ define a marginal average $g^\circ_{x(C_x)}
$, where the $N- |C_x|$ coordinates of $x$ not in $C_x$ are summed over by
\[
g^\circ_{x(C_x)}
= 2^{(N-|C_x|)/2}
\sum_{x:x[k] \in \{0,1\}; k \notin C_x}g^\circ_{x}.
\]
Then
\[
g^\circ_{x(C_x)}
=  2^{-|C_x|/2}
\sum_{A \subseteq C_x, A \ne \emptyset}\ \prod_{j\in A}(-1)^{{x}[j]}\ \cdot
\frac{1}{\sqrt{1 + \frac{\alpha}{1-\alpha}(1-\rho_A)}}\ \cdot
g_A.
\]
$(g^\circ_{x(C_x)})$ and $(g^\circ_{y(C_y)} )$ are independent if $C_x\cap C_y=\emptyset$ (even if $x=y$).\\
More generally if $C_x\cap C_y\ne\emptyset$.
\begin{equation}
\text{\rm Cov}(g^\circ_{x(C_x)},g^\circ_{y(C_y)}) 
=  2^{-|C_x|/2 - |C_y|/2 }
\sum_{A \subseteq C_x\cap C_y, A \ne \emptyset}\ \prod_{j\in A}(-1)^{x[j]+y[j]}\ \cdot
\frac{1}{1 + \frac{\alpha}{1-\alpha}(1-\rho_A)}.
\label{notempty:00}
\end{equation}
\end{Proposition}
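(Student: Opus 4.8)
The plan is to substitute the explicit expansion of $g^\circ_x$ into the definition of the marginal average, interchange the two finite sums, and exploit the elementary identity $\sum_{x[k]\in\{0,1\}}(-1)^{x[k]}=0$ to collapse the expansion onto the subsets contained in $C_x$. The independence and covariance statements then follow at once from the orthonormality of $(g_A)$.

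First I would write $g^\circ_x = 2^{-N/2}\sum_{\emptyset\neq A\subseteq[N]} w_A \big(\prod_{j\in A}(-1)^{x[j]}\big) g_A$ with $w_A = (1+\frac{\alpha}{1-\alpha}(1-\rho_A))^{-1/2}$, insert this into the defining sum for $g^\circ_{x(C_x)}$, and move the (finite) sum over the free coordinates $x[k]$, $k\notin C_x$, inside the sum over $A$. For a fixed $A$ the product $\prod_{j\in A}(-1)^{x[j]}$ factorizes over coordinates, so the sum over the free coordinates factorizes as a product of single-coordinate sums: a coordinate $k\notin C_x$ contributes the factor $\sum_{x[k]\in\{0,1\}}(-1)^{x[k]}=0$ when $k\in A$, and the factor $2$ when $k\notin A$. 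Hence every term with $A\not\subseteq C_x$ is annihilated, while a term with $A\subseteq C_x$ survives carrying a combinatorial weight $2^{N-|C_x|}$ (one factor of $2$ per free coordinate). Collecting the powers of $2$ from this weight, from $2^{-N/2}$, and from the prefactor gives the stated normalization $2^{-|C_x|/2}$ and the restriction $\emptyset\neq A\subseteq C_x$, which proves the first display.

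For the independence claim, observe that the surviving representation of $g^\circ_{x(C_x)}$ is a linear combination of $(g_A)_{\emptyset\neq A\subseteq C_x}$ only, and likewise $g^\circ_{y(C_y)}$ depends only on $(g_B)_{\emptyset\neq B\subseteq C_y}$. If $C_x\cap C_y=\emptyset$, then a nonempty $A\subseteq C_x$ can never coincide with a nonempty $B\subseteq C_y$, since $A=B$ would force $A\subseteq C_x\cap C_y=\emptyset$. Thus the two fields are built from disjoint subfamilies of the independent family $(g_A)$ and are therefore independent; note that this uses only disjointness of the index sets $C_x,C_y$, not of $x,y$, which is why independence persists even when $x=y$.

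For the covariance in the overlapping case I would use $\mathrm{Cov}(g_A,g_B)=\delta_{AB}$, so that only the subsets common to both expansions contribute, namely those $A$ with $A\subseteq C_x$ and $A\subseteq C_y$, equivalently $\emptyset\neq A\subseteq C_x\cap C_y$. Each such term carries the normalizing factor $2^{-|C_x|/2-|C_y|/2}$, the spin product $\prod_{j\in A}(-1)^{x[j]}\prod_{j\in A}(-1)^{y[j]}=\prod_{j\in A}(-1)^{x[j]+y[j]}$, and the product of the two square-root weights $w_A^2=(1+\frac{\alpha}{1-\alpha}(1-\rho_A))^{-1}$, yielding (\ref{notempty:00}). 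The only delicate point in the whole argument is the bookkeeping of the powers of $2$ in the first step; everything else is a direct consequence of the vanishing of the odd-coordinate sums and the orthonormality of $(g_A)$.
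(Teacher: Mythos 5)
Your proposal is correct and follows essentially the same route as the paper's own (much terser) proof: expand $g^\circ_x$ in the $(g_A)$ basis, annihilate every $A\not\subseteq C_x$ via $\sum_{x[k]\in\{0,1\}}(-1)^{x[k]}=0$, and read off both the independence claim and the covariance (\ref{notempty:00}) from orthonormality of $(g_A)$. The one caveat is the power-of-two bookkeeping you defer: it closes only if the normalizing prefactor in the definition is read as $2^{-(N-|C_x|)/2}$ rather than the $2^{(N-|C_x|)/2}$ printed in the statement (with the printed sign the collected factor is $2^{N-3|C_x|/2}$, not $2^{-|C_x|/2}$, so the exponent there is evidently a sign typo).
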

\begin{proof}
The proof follows by noting that summing in $g_x^\circ$ for an entry $j \not\in C_x$
\[
(-1)^{x[j]=0} + (-1)^{x[j]=1} = 1 + -1 = 0.
\]
For the independence property $(g^\circ_{x(C_x)})$ and $(g^\circ_{y(C_y)} )$ sum over disjoint values in $(g_A)$ so they are independent. The general form (\ref{notempty:00}) is straightforward to derive.
\end{proof}
Viewing indexing for fixed $x$ with $C_x$ varying as
\begin{equation}
(g^\circ_{x(C_x)})_{ C_x \subseteq [N], C_x \ne \emptyset}
\label{subsetprocess:00}
\end{equation}
the process (\ref{subsetprocess:00}) has independent increments. $C_x$ itself does not depend on $x$ as it only selects a subset of $[N]$ with $\dim x(C_x) = \dim C_x$, however it is convenient to index 
$(g^\circ_{x(C_x)})$ taking  choices of subsets $C_x$ for a given $x$.
An example is $N=5$, $x=(11010)$, $C_x=\{2,3,4\}$, $x(C_x) = (101)$.
\begin{Corollary}\label{kspin:00}
If $(Z_t)$ are exchangeable then
\begin{equation}
g^\circ_{x(C_x)}
=  2^{-|C_x|/2}
\sum_{k=1}^{|C_x|}
\frac {1}{\sqrt { 1 + \frac{\alpha}{1-\alpha}(1-\rho_k)}}
\cdot S_k\big (x;C_x,(g_A)\big )
\label{exchcirc:00}
\end{equation}
\end{Corollary}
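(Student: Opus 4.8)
The plan is to read the result directly off the decomposition of $g^\circ_{x(C_x)}$ already established in Proposition~\ref{prop:2}, which expresses
\[
g^\circ_{x(C_x)}
=  2^{-|C_x|/2}
\sum_{A \subseteq C_x,\ A \ne \emptyset}\ \prod_{j\in A}(-1)^{{x}[j]}\ \cdot
\frac{1}{\sqrt{1 + \frac{\alpha}{1-\alpha}(1-\rho_A)}}\ \cdot
g_A
\]
as a linear form in the \iid standard normals $(g_A)_{A\subseteq C_x}$. The only structural feature needed is that exchangeability of the entries of $Z$ forces $\rho_A$ to depend on $A$ solely through its cardinality, so that $\rho_A = \rho_{|A|} \equiv \rho_k$ whenever $|A|=k$. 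This is exactly the same reduction that turns (\ref{decomposition:a00}) into (\ref{exchcirc:00a}) in Proposition~\ref{prop:0}, now carried out at the level of the centered, marginalized field.

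First I would partition the sum over nonempty $A\subseteq C_x$ according to the cardinality $k=|A|$, which ranges over $1,\ldots,|C_x|$. Because $\rho_A = \rho_k$ under exchangeability, the coefficient $\bigl(1 + \frac{\alpha}{1-\alpha}(1-\rho_A)\bigr)^{-1/2}$ is constant across all sets $A$ of a fixed size $k$, and may therefore be pulled outside the inner sum over those $A$. What remains inside is $\sum_{A\subseteq C_x,\,|A|=k}\prod_{j\in A}(-1)^{x[j]}g_A$, which is precisely the $k$-spin variable $S_k\big(x;C_x,(g_A)\big)$ of (\ref{kspin:123}) with $C=C_x$. Collecting the pulled-out coefficients yields (\ref{exchcirc:00}).

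There is no substantive obstacle here: the corollary is a purely combinatorial regrouping of the representation in Proposition~\ref{prop:2}, and the independence and unit-variance normalization of the $(g_A)$ are inherited unchanged from that proposition. If anything deserves a line of care, it is merely noting that the outer factor $2^{-|C_x|/2}$ and the summation range $k\in\{1,\ldots,|C_x|\}$ carry over verbatim, and that the empty set is excluded both in the original sum and in the definition of $S_k$ (which starts at $k=1$), so that no stray baseline term $g_\emptyset$ reappears after centering.
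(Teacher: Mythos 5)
Your proposal is correct and matches the paper's approach: the paper states this corollary without a separate proof, precisely because it follows from Proposition~\ref{prop:2} by the same regrouping-by-cardinality argument used to pass from (\ref{decomposition:a00}) to (\ref{exchcirc:00a}) in Proposition~\ref{prop:0}, namely that exchangeability gives $\rho_A=\rho_{|A|}$, so the coefficient is constant over sets of fixed size $k$ and the inner sum is exactly $S_k\big(x;C_x,(g_A)\big)$. Your additional remark that the excluded empty set prevents any stray $g_\emptyset$ term is a sensible check and consistent with the centering.
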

\begin{Remark}
Gaussian spins for $x\in {\cal V}_N$ can be found as linear sums of $g^\circ_{x(C_x)}$ by inversion of the expressions
\begin{equation*}
\sum_{C \subseteq [N], |C|=l}g^\circ_{x(C)}
= 2^{-\frac{l}{2}}\sum_{k=1}^l{N-k\choose l-k}\frac {1}{\sqrt { 1 + \frac{\alpha}{1-\alpha}(1-\rho_k)}}
\ S_k\big (x;[N],(g_A)\big ).
\label{spin:354}
\end{equation*}
\end{Remark}

\subsection{Gaussian fields on ${\cal V}_\infty$}\label{GaussInfty:sec}
$\mathbb{P}(y\mid x) = \delta_{yx}$ for all $x,y \in {\cal V}_N$
is equivalent to $\rho_A^{(N)}=1$ for all $A \subseteq [N]$. In a process where $\alpha, \rho_A$ depend on $N$,  $\alpha_N \to 1$ and the transition function behaves like a delta function with $\rho_A^{(N)} \to 1$ for all $A\subseteq [N]$ suppose
\begin{equation*}
b_A := \lim_{N\to \infty} \frac{\alpha_N} {1-\alpha_N}(1-\rho^{(N)}_A)
\label{rholim:00}
\end{equation*}
is a proper finite limit for all finite subsets $A \subseteq \mathbb{Z}_+$. Then for $x\in {\cal V}_\infty$ and $|C_x| < \infty$ set
\begin{equation}
g^\infty_{x(C_x)} :=
 2^{-|C_x|/2}
\sum_{A \subseteq C_x, A \ne \emptyset}\prod_{j\in A}(-1)^{{x}[j]}\ \cdot
\frac{1}{\sqrt{1 + b_A}}\ \cdot
g_A,
\label{increments:001}
\end{equation}
which is well defined. In an exchangeable limit
\begin{equation*}
g^\infty_{x(C_x)}
=  2^{-|C_x|/2}
\sum_{k=1}^{|C_x|}
\frac {1}{\sqrt { 1 + b_k}}
\cdot S_k\big (x;C_x,(g_A)\big ).
\label{exchcirc:00aa}
\end{equation*}
\begin{Proposition}
A Gaussian field $\big (g^\infty_{x(C_x)}\big )$ for $x \in {\cal V}_\infty$, $C_x$ a finite subset of 
$\mathbb{Z}_+$ is well defined by (\ref{increments:001}). There is weak convergence of 
$\big (g^{(N)}_{x(C_x)}\big )$ to $\big (g^\infty_{x(C_x)}\big )$.
The process has independent increments such that $g^\infty_{x(C_x)}$ and $g^\infty_{y(C_y)}$ are independent if $C_x\cap C_y = \emptyset$.
\end{Proposition}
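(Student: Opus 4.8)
The plan is to exploit that both the pre-limit fields $\big(g^{(N)}_{x(C_x)}\big)$ and the candidate limit $\big(g^\infty_{x(C_x)}\big)$ are centered Gaussian families built from the single i.i.d.\ array $(g_A)_{A\subseteq \mathbb{Z}_+,\,|A|<\infty}$, so every assertion reduces to a statement about means (all zero) and covariances. First I would dispose of well-definedness. For fixed $x\in{\cal V}_\infty$ and finite $C_x$, the sum in (\ref{increments:001}) ranges over the $2^{|C_x|}-1$ nonempty subsets $A\subseteq C_x$, a finite index set. Since each factor $(-1)^{Z[j]}\in\{-1,1\}$ we have $\rho_A\in[-1,1]$, hence $1-\rho^{(N)}_A\in[0,2]$ and $b_A=\lim_N \frac{\alpha_N}{1-\alpha_N}(1-\rho^{(N)}_A)\ge 0$; combined with the standing hypothesis that this limit is finite, each coefficient $2^{-|C_x|/2}(1+b_A)^{-1/2}$ is a finite real number. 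Thus $g^\infty_{x(C_x)}$ is a finite linear combination of independent standard normals, a bona fide Gaussian variable, and the whole collection is a consistent Gaussian field because all its members are linear functionals of the one array $(g_A)$.

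For the weak convergence I would note that, for $N$ large enough that $C_x\subseteq[N]$, the field $g^{(N)}_{x(C_x)}$ is precisely the centered marginal field of Proposition \ref{prop:2}, i.e.\ the expression (\ref{increments:001}) with $b_A$ replaced by $\frac{\alpha_N}{1-\alpha_N}(1-\rho^{(N)}_A)$. Fixing any finite collection of indices $x^{(1)}(C_1),\dots,x^{(m)}(C_m)$, equation (\ref{notempty:00}) gives each covariance-matrix entry as the finite sum $2^{-|C_i|/2-|C_j|/2}\sum_{\emptyset\ne A\subseteq C_i\cap C_j}\prod_{l\in A}(-1)^{x^{(i)}[l]+x^{(j)}[l]}\big(1+\frac{\alpha_N}{1-\alpha_N}(1-\rho^{(N)}_A)\big)^{-1}$. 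Each of the finitely many summands converges termwise by the assumed convergence to $b_A$, so the entry converges to the corresponding covariance of the limit field. A centered Gaussian vector is determined by its covariance matrix and converges in distribution iff that matrix converges, so every finite-dimensional distribution of $\big(g^{(N)}_{x(C_x)}\big)$ converges to that of $\big(g^\infty_{x(C_x)}\big)$, which is exactly weak convergence of the field.

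Independent increments follow most cleanly from the support structure: when $C_x\cap C_y=\emptyset$, a nonempty $A\subseteq C_x$ can never equal a nonempty $A'\subseteq C_y$, so $g^\infty_{x(C_x)}$ and $g^\infty_{y(C_y)}$ are linear functionals of disjoint blocks of the independent array $(g_A)$ and are therefore independent. Equivalently, passing (\ref{notempty:00}) to the limit produces a covariance whose index set $\{\emptyset\ne A\subseteq C_x\cap C_y\}$ is empty, so the two variables are uncorrelated and, being jointly Gaussian, independent. This is the limiting form of the independence already established at finite $N$ in Proposition \ref{prop:2}.

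The hard part, such as it is, will not be any single deep estimate: once Proposition \ref{prop:2} is in hand, the entire weight of the argument rests on the standing hypothesis that each $b_A$ exists as a finite limit, together with the elementary bound $\rho_A\in[-1,1]$ that keeps the denominators bounded away from zero. The one point I would state carefully is that weak convergence of a Gaussian \emph{process} means convergence of every finite-dimensional distribution, and that here this is secured purely by the entrywise convergence of covariance matrices over finite sums, so no tightness argument and no control of an infinite series is required.
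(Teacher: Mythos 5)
Your proposal is correct and follows essentially the same route as the paper's proof: well-definedness via finite sums of the independent normals $(g_A)$, weak convergence of finite-dimensional distributions via entrywise convergence of covariance matrices (using that centered Gaussian vectors are determined by their covariances), and independence from the fact that disjoint $C_x$, $C_y$ yield sums over disjoint blocks of $(g_A)$. Your write-up is somewhat more explicit than the paper's (identifying $g^{(N)}_{x(C_x)}$ with the centered field of Proposition 4 and noting $b_A \ge 0$), but the argument is the same.
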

\begin{proof}
The \iid  collection $(g_A)$ of N$(0,1)$ random variables is well defined for finite subsets $A$ of $\mathbb{Z}_+$. The number of subsets in the indexing set is countable. $\big (g^\infty_{x(C_x)}\big )$ is then well defined by finite sums 
(\ref{increments:001}) of random variables in $(\xi_A)$. Convergence of any finite collection
$g^{(N)}_{x_1(C_{x_1})},\ldots, g^{(N)}_{x_r(C_{x_r})}$ to 
$g^{\infty}_{x_1(C_{x_1})},\ldots, g^{\infty}_{x_r(C_{x_r})}$ follows because the variables are normal and covariances converge. Weak convergence here is in the sense of finite dimensional distributions converging. For Gaussian variables with zero means it is sufficient that their covariances converge.
 The independent increments property follows because $g^\infty_{x(C_x)}$ and $g^\infty_{y(C_y)}$ are sums over disjoint variables $g_A$.
\end{proof}
\subsection{Gaussian fields from a de Finetti random walk}\label{deF:sec}
In a random walk (\ref{rw:000}) on ${\cal V}_\infty$ when $(Z_t)$ are \iid  de Finetti sequences then, for each $t$, $X_t$ is a de Finetti sequence. The Gaussian field is determined by how these measures change over time. If $N$ entries are selected from $(X_t)$ then the random walk $(X_t^{(N)})$ on ${\cal V}_N$ is coupled with $(X_t)$. There is a distinction however from a random walk in ${\cal V}_N$ constructed by taking $N$ entries for $(Z_t^{(N)})$ from a de Finetti sequence with killing probability $1-\alpha_N$ depending on $N$.

The concept of a probability generating functional is needed in the paper.
A finite point process $M(\cdot)$ with points $(\xi_j)$ in a state space ${\cal X}$  has a probability generating functional
\begin{align*}
G[f] &= \mathbb{E}\big [\exp\Big (\int_{\cal X}\log f(x) M(dx) \Big )\Big ]\\
&= \mathbb{E}\big [\prod_{j=1}^\infty f(\xi_j)\big ]
\end{align*}
for a suitable class of functions $0 \leq f \leq 1$ where $1-f$ vanishes outside some bounded set. Suppose $f \in {\cal V(X)}$, the class of real valued Borel functions on a complete separable metric space. In this paper ${\cal X}$ is $[0,1]$ or $[-1,1]$.
A Poisson point process where the points have a state space ${\cal X}$ has a \pgfl 
\[
G[f]= \exp \big \{- \int_{\cal X} \big (1-f(\xi)\big )\mu(d\xi)\Big \},
\]
where the intensity measure $\mu$ is a non-negative measure on ${\cal X}$. $f$ has to be a function for which the integral is finite.  For an introduction to probability generating functionals see \cite{W1972, DVJ2008}. 

The next proposition identifies how the de Finetti measures change over time $t$. It is sometimes convenient to think of spin sequences $\pm 1$. If $\nu$ is a de Finetti measure on $[0,1]$ define $\nu^\spin$ on $[-1,1]$ by a change of variable,
 where for a measurable set $B \subseteq [-1,1]$
 \begin{equation*}
 \int_B\nu^\spin(d\xi) = \int_{1-2\omega \in B}\nu(d\omega).
 \label{shift:123}
 \end{equation*}
\begin{Proposition}\label{propdefmeasures:00}
In the random walk (\ref{rw:000}) let $(Z_t)_{t \in \mathbb{Z}_+}$ be \iid  de Finetti sequences with measure $\nu$.
$(X_t)$ are de Finetti sequences with measures $\varphi_t$ such that for a measurable set $A \subseteq [-1,1]$
\begin{align}
\int_A\varphi^\spin_{t+1}(d\psi_{t+1}) 
= \int_{\xi\psi_t \in A}\nu^\spin(d\xi)\varphi_t^\spin(d\psi_t),\ t \in \mathbb{N},
\label{deF:250}
\end{align}
with a recursive solution
\begin{align}
\int_A\varphi^\spin_{t}(d\psi_{t}) 
= \int_{\prod_{k=0}^t \psi_k \in A}\otimes_{k=1}^t\nu^\spin(d\psi_k)
\otimes\varphi^\spin_0(d\psi_0),
\label{deF:251}
\end{align}
where the initial spin measure is $\varphi^\spin_0$.
The de Finetti measure after the $t^{\text{th}}$ transition is $\varphi_t$, specified by 
\[
\int_A\varphi_t(d\omega) = \int_{\frac{1-\psi}{2} \in A}\varphi_t^\spin(d\psi)
\]
for measurable sets $A \subseteq [0,1]$.
\end{Proposition}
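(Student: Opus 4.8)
The plan is to argue by induction on $t$ that each $X_t$ is an infinitely exchangeable $\{0,1\}$-sequence, hence by de Finetti's theorem a mixture of \iid Bernoullis with some directing measure $\varphi_t$, and then to track how that directing measure transforms under one step of the walk. The base case is the hypothesis that $X_0$ is a de Finetti sequence. For the inductive step I would use the coordinatewise identity $(-1)^{X_{t+1}[k]} = (-1)^{X_t[k]}\,(-1)^{Z_t[k]}$, which is just the spin form of $X_{t+1}[k] = X_t[k]+Z_t[k]\bmod 2$. Since $Z_t$ is independent of $X_t$ and both are de Finetti sequences, conditioning on the directing parameter of $X_t$ and on that of $Z_t$ makes the families $(X_t[k])_k$ and $(Z_t[k])_k$ conditionally \iid Bernoulli and mutually independent; the coordinatewise XORs $(X_{t+1}[k])_k$ are then conditionally independent and identically distributed, so $X_{t+1}$ is again a mixture of \iid Bernoullis, and in particular a de Finetti sequence.

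Next I would pass to spin coordinates. For a de Finetti sequence with directing parameter $\omega$ the single-coordinate spin mean is $\mathbb{E}[(-1)^{W}\mid\omega]=1-2\omega$, so the directing measure on $\unspin$ corresponds, under the change of variable $\psi = 1-2\omega$, to the measure $\varphi_t^\spin$ on $\spin$ in the statement. Writing $\psi_t$ for the directing spin parameter of $X_t$ and $\xi$ for that of $Z_t$, conditional independence gives
\[
\mathbb{E}\big[(-1)^{X_{t+1}[k]}\mid \psi_t,\xi\big]
= \mathbb{E}\big[(-1)^{X_t[k]}\mid\psi_t\big]\,\mathbb{E}\big[(-1)^{Z_t[k]}\mid\xi\big]
= \psi_t\,\xi .
\]
Because conditional on $(\psi_t,\xi)$ the coordinates of $X_{t+1}$ are \iid Bernoulli, the directing spin parameter of $X_{t+1}$ is exactly the product $\psi_{t+1}=\psi_t\,\xi$, with $\xi\sim\nu^\spin$ independent of $\psi_t\sim\varphi_t^\spin$. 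Pushing the product law of $(\psi_t,\xi)$ forward under multiplication is precisely the recursion (\ref{deF:250}).

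The recursive solution (\ref{deF:251}) then follows by unfolding: iterating $\psi_{t+1}=\psi_t\,\xi$ expresses the directing spin parameter of $X_t$ as the product of the initial parameter $\psi_0\sim\varphi_0^\spin$ and the $t$ independent increment parameters $\psi_1,\dots,\psi_t\sim\nu^\spin$, so $\varphi_t^\spin$ is the law of $\prod_{k=0}^t\psi_k$, i.e.\ the pushforward of $\otimes_{k=1}^t\nu^\spin\otimes\varphi_0^\spin$ under the product map. Finally, inverting $\psi=1-2\omega$, that is $\omega=\tfrac{1-\psi}{2}$, converts $\varphi_t^\spin$ back into the $\unspin$-measure $\varphi_t$ as stated.

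I expect the main obstacle to be justifying that the directing measure of $X_{t+1}$ is genuinely the law of the product $\psi_t\,\xi$, rather than merely a measure sharing its first spin moment. The safe route is the conditional-\iid argument above: once one knows that conditional on $(\psi_t,\xi)$ the sequence $X_{t+1}$ is \iid Bernoulli with spin parameter $\psi_t\,\xi$, uniqueness of the de Finetti directing measure (Hewitt--Savage) pins $\varphi_{t+1}^\spin$ down unambiguously. Equivalently, one can verify the full family of joint spin moments $\rho_k^{(t+1)}=\rho_k^{(t)}\rho_k$ via independence of $X_t$ and $Z_t$ in $\rho_k^{(t)}=\mathbb{E}[\prod_{j=1}^k(-1)^{X_t[j]}]$, and invoke that moments determine a measure supported on the compact set $\spin$.
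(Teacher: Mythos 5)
Your proposal is correct and follows essentially the same route as the paper: both rest on the coordinatewise spin identity $1-2X_{t+1}[k]=(1-2X_t[k])(1-2Z_t[k])$ together with the conditional-\iid-given-directing-parameters argument, so that $\varphi_{t+1}^\spin$ is the pushforward of $\nu^\spin\otimes\varphi_t^\spin$ under multiplication, and the recursion is then unfolded. The only differences are cosmetic: the paper computes the conditional Bernoulli parameter $\phi_t(1-\omega)+(1-\phi_t)\omega$ in $[0,1]$ coordinates and passes to spins by a change of variable, whereas you work in spin coordinates throughout and make explicit the appeal to uniqueness of the de Finetti directing measure, which the paper leaves implicit.
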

\begin{proof}
An entry $X_{t+1}[k] = 1$ if $X_t[k]=1, Z_t[k]=0$ or $X_t[k]=0, Z_t[k]=1$,
the same as $X_{t+1}[k] = X_t[k](1-Z_t[k]) + (1- X_t[k])Z_t[k]$ or $1-2X_{t+1}[k]=(1-2Z_t[k])(1-2X_t[k])$.
Therefore for $B\subseteq [0,1]$
\begin{equation*}
\int_B\varphi_{t+1}(d\phi_{t+1}) 
= \int_{\phi_t(1-\omega) + (1-\phi_t)\omega \in B}\nu(d\omega)\varphi_t(d\phi_t),
\label{spinback:00}
\end{equation*}
which is equivalent after a change of variable to (\ref{deF:250}). In the change of variable
$\omega = (1-\xi)/2$, $\phi_t = (1 - \psi_t)/2$ and 
$\phi_t(1-\omega) + (1-\phi_t)\omega = (1-\xi\psi_t)/2$.
A recursive solution is clearly (\ref{deF:251}). 
\end{proof}
Expressed in terms of random variables Proposition \ref{propdefmeasures:00} is equivalent to the following.
Let $\xi_1,\ldots, \xi_t$ be \iid random variables with a probability measure $\nu^\spin$ and $\xi_0$ a random variable with probability measure $\varphi_0^\spin$. Then the de Finetti measure of $X_t$, $\varphi_t$, is the probability measure of
$\frac{1}{2}\big ( 1 - \xi_0\prod_{j=1}^t\xi_j\big )$.
In Corollary \ref{pgflcorr} the \pgfl of a point process
$(\xi_j)_{j=0}^{T_\alpha}$ is calculated. 
\begin{Corollary}\label{pgflcorr}
A point process consisting of $T_\alpha$ \iid points from $\nu^\spin$ and an initial measure $\varphi_0^\spin = \delta_1$ has a \pgfl
\begin{equation}
G[f] 
=  \frac{1}{1+\frac{\alpha}{1-\alpha}\int_{[-1,1]}(1-f(\xi))\nu^\spin(d\xi)}f(1).
\label{pgfl:2600a}
\end{equation}
\end{Corollary}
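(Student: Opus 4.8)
The plan is to combine the recursive de Finetti description from Proposition~\ref{propdefmeasures:00} with the geometric law of $T_\alpha$ from (\ref{killing:00}), and to compute the \pgfl directly from its definition by conditioning on $T_\alpha=t$. First I would set up the point process: it consists of the initial atom at $\xi_0=1$ (coming from $\varphi_0^\spin=\delta_1$) together with $T_\alpha$ further \iid points $\xi_1,\ldots,\xi_{T_\alpha}$ drawn from $\nu^\spin$. Since the $\xi_j$ for $j\ge 1$ are independent of $T_\alpha$ and of each other, the product form of the \pgfl gives, conditionally on $T_\alpha=t$,
\[
\mathbb{E}\Big[\prod_{j=0}^{T_\alpha}f(\xi_j)\,\Big|\,T_\alpha=t\Big]
= f(1)\prod_{j=1}^t \mathbb{E}\big[f(\xi_j)\big]
= f(1)\,\Big(\int_{[-1,1]}f(\xi)\,\nu^\spin(d\xi)\Big)^{t}.
\]
The atom at $1$ factors out cleanly as $f(1)$ because $\varphi_0^\spin=\delta_1$ is deterministic, which accounts for the isolated $f(1)$ on the right-hand side of (\ref{pgfl:2600a}).

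Next I would average over the geometric law of $T_\alpha$. Writing $m:=\int_{[-1,1]}f(\xi)\,\nu^\spin(d\xi)$ and using $\mathbb{P}(T_\alpha=t)=(1-\alpha)\alpha^t$, the unconditional \pgfl becomes
\[
G[f]=f(1)\sum_{t=0}^{\infty}(1-\alpha)\alpha^t\, m^{t}
= f(1)\,\frac{1-\alpha}{1-\alpha m},
\]
valid since $0\le f\le 1$ forces $\alpha m<1$, so the geometric series converges. The final step is purely algebraic: I would rewrite the scalar factor by substituting $m=\int(1-(1-f(\xi)))\nu^\spin(d\xi)=1-\int(1-f(\xi))\nu^\spin(d\xi)$ (using that $\nu^\spin$ is a probability measure), so that
\[
\frac{1-\alpha}{1-\alpha m}
= \frac{1-\alpha}{(1-\alpha)+\alpha\int_{[-1,1]}(1-f(\xi))\nu^\spin(d\xi)}
= \frac{1}{1+\frac{\alpha}{1-\alpha}\int_{[-1,1]}(1-f(\xi))\nu^\spin(d\xi)},
\]
which together with the $f(1)$ factor is exactly (\ref{pgfl:2600a}).

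There is no serious obstacle here; the result is essentially a generating-function identity once the process is set up correctly. The only point requiring care is bookkeeping for the initial atom: one must be explicit that $\xi_0$ sits at the deterministic value $1$ (so it contributes the bare factor $f(1)$ and not an integral against $\nu^\spin$), and that the random number of genuinely random points is $T_\alpha$ rather than $T_\alpha+1$. I would also note the mild measurability and convergence check that $\alpha m<1$ for admissible $f$, which legitimizes summing the geometric series, and confirm that the identity $m=1-\int(1-f)\,d\nu^\spin$ uses only that $\nu^\spin$ has total mass one.
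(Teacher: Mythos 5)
Your proposal is correct and follows essentially the same route as the paper: condition on $T_\alpha=t$ to get the conditional \pgfl $\big(\int_\spin f(\xi)\nu^\spin(d\xi)\big)^t f(1)$, then sum the geometric series $(1-\alpha)\sum_{t\ge 0}\alpha^t(\cdot)$ and simplify. The only difference is that you spell out the final algebraic step (using that $\nu^\spin$ has total mass one) which the paper leaves implicit.
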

\begin{proof}
The \pgfl of points $(\xi_j)_{j=0}^{t}$ from an initial measure $\delta_1$ and then from $\nu^\spin$ is equal to
\[
G_t[f] = \Big (\int_\spin f(\xi)\nu^\spin(d\xi)\Big )^tf(1).
\]
Therefore the \pgfl of $T_\alpha$ points  is $(1-\alpha)\sum_{t=0}^\infty\alpha^t G_t[f]$ which is equal to (\ref{pgfl:2600a}).
\end{proof}
The construction of the \pgfl  (\ref{pgfl:2600a}) is through a finite number of $T_\alpha$ points. However in the unconditional point process with \pgfl (\ref{pgfl:2600a}) there are points $(\xi_j)_{j=1}^\infty$.
\begin{Proposition}\label{propde:90}
Let $(Z_t)$ be \iid  de Finetti sequences with measures $\nu$. $X_0=0$ and $X_{T_\alpha}$ is the random walk (\ref{rw:000}) on ${\cal V}_\infty$ with stopping time according to (\ref{killing:00}). Then with $\rho_k=\mathbb{E}\big [(-1)^{Z[1]+\cdots + Z[k]}\big ]$,
\begin{equation}
\frac{1}{1 + \frac{\alpha}{1-\alpha}(1-\rho_k)} = \mathbb{E}\big [Y^k\big ],\ k \in \mathbb{N},
\label{YYY:000}
\end{equation}
where $Y$ is a random variable on $[-1,1]$ which is the product of points in a point process with \pgfl (\ref{pgfl:2600a}).
%
Take $x_N,y_N \in {\cal V}_N$ as $N$ particular entries in the ${\cal V}_\infty$ random walk. The Green function for these $N$ entries has an expression
\begin{equation}
(1-\alpha)G(x_N,y_N;\alpha)= \mathbb{E}\Big [ 
\Big (\frac{1}{2}-\frac{Y}{2}\Big )^{\|y_N-x_N\|}\Big (\frac{1}{2}+\frac{Y}{2}\Big )^{N-\|y_N-x_N\|}\Big ].
\label{GreendeF:00}
\end{equation}
\end{Proposition}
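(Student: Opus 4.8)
The plan is to establish (\ref{YYY:000}) first by computing the moments of $Y$ directly, and then to obtain (\ref{GreendeF:00}) from (\ref{YYY:000}) together with the Krawtchouk generating function. By the random-variable description following Proposition \ref{propdefmeasures:00} and by Corollary \ref{pgflcorr}, the point process with \pgfl (\ref{pgfl:2600a}) consists of a deterministic initial point $\xi_0 = 1$ (coming from $\varphi_0^\spin = \delta_1$, since $X_0 = 0$ corresponds to spin $1$) together with $T_\alpha$ independent points $\xi_1,\xi_2,\ldots$ drawn from $\nu^\spin$, where $T_\alpha$ is geometric as in (\ref{killing:00}). Hence the product of points is $Y = \prod_{j=1}^{T_\alpha}\xi_j$, and $\mathbb{E}[Y^k]$ is well defined for every $k\in\mathbb{N}$ because each $\xi_j \in [-1,1]$.

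For (\ref{YYY:000}) I would condition on $T_\alpha = t$ and use independence of the $\xi_j$. The change of variable $\xi = 1-2\omega$ turns (\ref{deF:101}) into $\rho_k = \int_\spin \xi^k\,\nu^\spin(d\xi) = \mathbb{E}[\xi_1^k]$, so conditioning gives $\mathbb{E}[Y^k\mid T_\alpha = t] = \rho_k^t$. Summing the geometric series,
\[
\mathbb{E}[Y^k] = (1-\alpha)\sum_{t=0}^\infty \alpha^t\rho_k^t = \frac{1-\alpha}{1-\alpha\rho_k},
\]
and a one-line rearrangement shows this equals $\big(1+\tfrac{\alpha}{1-\alpha}(1-\rho_k)\big)^{-1}$, which is (\ref{YYY:000}). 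Equivalently one may substitute $f(\xi) = \xi^k$ into (\ref{pgfl:2600a}), noting $f(1) = 1$ and $\int_\spin (1-\xi^k)\,\nu^\spin(d\xi) = 1-\rho_k$; the direct conditioning argument is preferable since it sidesteps the fact that $\xi^k$ leaves $[0,1]$ for odd $k$, where the \pgfl was only defined.

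For (\ref{GreendeF:00}) I would start from the exchangeable Green function (\ref{transition:a01}), replace each coefficient $\big(1+\tfrac{\alpha}{1-\alpha}(1-\rho_k)\big)^{-1}$ by $\mathbb{E}[Y^k]$ via (\ref{YYY:000}) to reach (\ref{yg:000}), and then interchange the finite sum over $k$ with the expectation. Writing $w = \|y_N - x_N\|$ and recognizing the Krawtchouk generating function (\ref{kgf:001}) with $\phi = Y$ (the $k=0$ term, $Q_0 \equiv 1$, supplying the constant), the bracketed sum collapses inside the expectation to $(1-Y)^{w}(1+Y)^{N-w}$; dividing by $2^N$ distributes the powers of $2$ and yields (\ref{GreendeF:00}). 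The only genuine content lies in the moment computation for $Y$ and its mild odd-$k$ technicality; the interchange of the finite sum with the expectation is immediate, and the collapse via (\ref{kgf:001}) is purely formal.
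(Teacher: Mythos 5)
Your proposal is correct and follows essentially the same route as the paper: the paper's proof substitutes $f_k(\xi)=\xi^k$ into the \pgfl\ (\ref{pgfl:2600a}) to get the moments of $Y$, which is exactly your geometric-series conditioning on $T_\alpha$ unwound (that is how (\ref{pgfl:2600a}) was derived in Corollary \ref{pgflcorr}), and then cites (\ref{introeigen:00}), whose derivation from (\ref{transition:a01}), (\ref{yg:000}) and the Krawtchouk generating function (\ref{kgf:001}) is precisely what you spell out. Your observation that the direct conditioning sidesteps the fact that $\xi^k\notin[0,1]$ for odd $k$ is a small technical refinement over the paper's substitution, but not a different argument.
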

\begin{proof}
Let $T_\alpha$ be a geometric random variable with distribution (\ref{killing:00}).
Consider a point process $\{\xi_j\}$, with elements in $[-1,1]$,  constructed from $T_\alpha$ independent points with probability measure $\nu^\spin$ and an initial point process with probability measure $\delta_{\{1\}}$. The initial spin measure,  degenerate at $\{1\}$, arises from $X_0=0$. The \pgfl  of this constructed point process is (\ref{pgfl:2600a}).
By definition $Y= \prod_{j=1}^\infty\xi_j$. Set $f_k(\xi) = \xi^k$. Then
\begin{align*}
\mathbb{E}\big [Y^k\big ] &= \mathbb{E}\Big [\Big (\prod_{j=1}^\infty\xi_j\Big )^k\Big ]\nonumber\\
&= \mathbb{E}\big [\prod_{j=1}^\infty f_k(\xi_j)\big ]\nonumber\\
&=  \frac{1}{1+\frac{\alpha}{1-\alpha}\int_{[-1,1]}(1-f_k(\xi))\nu^{[-1,1]}(d\xi)}\nonumber\\
&=  \frac{1}{1+\frac{\alpha}{1-\alpha}\int_{[-1,1]}(1-\xi^k)\nu^{[-1,1]}(d\xi)},
\end{align*}
implying (\ref{YYY:000})
since $\rho_k = \int_\spin \xi^k\nu^\spin(d\xi)$.
Then (\ref{GreendeF:00}) follows from (\ref{introeigen:00}). Even though $Y \in [-1,1]$, $\mathbb{E}\big [Y^k\big ] \geq 0$.
\end{proof}
\begin{Corollary}\label{corrkill:00}
Let $(X_t)$, with $X_0=x$, be a random walk (\ref{rw:000}) on ${\cal V}_\infty$ with a de Finetti measure $\nu$ for $Z$. Let $T_\alpha$ be a stopping time with distribution (\ref{killing:00}). 
Then $X_{T_\alpha} - x \mod 2$ is a de Finetti sequence with a measure which is the probability measure of $V := \frac{1}{2}- \frac{Y}{2}$.
\end{Corollary}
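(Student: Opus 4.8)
The plan is to reduce the claim to the walk started at the origin, feed in the Green-function identity already established in Proposition \ref{propde:90}, and then read off the de Finetti representation directly from the resulting marginal.

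First I would exploit translation invariance of the displacement. Iterating (\ref{rw:000}) gives $X_{T_\alpha} - x \equiv \sum_{j=0}^{T_\alpha-1} Z_j \pmod 2$, an expression that does not involve the starting point $x$. Hence the law of $X_{T_\alpha} - x \bmod 2$ under $X_0 = x$ coincides with the law of $X_{T_\alpha}$ under $X_0 = 0$, and it suffices to analyze the origin-started walk. Recall from the interpretation of the Green function that $(1-\alpha)G(0,y_N;\alpha) = \mathbb{P}\big(X_{T_\alpha}=y_N \mid X_0=0\big)$, i.e.\ it is exactly the probability that the killed walk, restricted to its first $N$ coordinates, equals $y_N$.

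Next I would invoke (\ref{GreendeF:00}), which states that this probability equals $\mathbb{E}\big[(\tfrac12-\tfrac Y2)^{\|y_N\|}(\tfrac12+\tfrac Y2)^{N-\|y_N\|}\big]$. Setting $V=\tfrac12-\tfrac Y2$, so that $1-V=\tfrac12+\tfrac Y2$ and $V\in[0,1]$ almost surely since $Y\in[-1,1]$, this rewrites as $\mathbb{E}\big[V^{\|y_N\|}(1-V)^{N-\|y_N\|}\big]$, depending on $y_N$ only through its Hamming weight. This is precisely the finite-dimensional distribution of a $\{0,1\}$-valued de Finetti sequence with mixing measure equal to the law of $V$: for any $N$ and any $y_N$ with $k=\|y_N\|$ ones, the probability is $\int_0^1 v^{k}(1-v)^{N-k}\,\mathbb{P}(V\in dv)$. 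Since these marginals take the Bernoulli-mixture form for every $N$ and are consistent in $N$ (inherited from the well-defined walk on ${\cal V}_\infty$), de Finetti's theorem identifies the displacement sequence as exchangeable with directing measure the law of $V$; exchangeability is in fact manifest already from the dependence on $\|y_N\|$ alone.

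The main point to be careful about is not the algebra but the probabilistic identification. One must verify that $V$ lands in $[0,1]$, which follows from $Y\in[-1,1]$, and that matching the Bernoulli-mixture marginals for all $N$ genuinely pins down the de Finetti measure rather than merely the marginals. This is the uniqueness half of de Finetti's theorem, which applies here because the mixed moments $\mathbb{E}[V^{k}]=\mathbb{E}\big[(\tfrac12-\tfrac Y2)^{k}\big]$ determine the law of $V$ uniquely on the bounded interval $[0,1]$.
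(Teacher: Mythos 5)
Your proposal is correct and takes essentially the same route as the paper: both read the mixture-of-Bernoulli finite-dimensional distributions of $X_{T_\alpha}-x \bmod 2$ directly off (\ref{GreendeF:00}), using the interpretation of $(1-\alpha)G$ as the killing probability, and identify the mixing measure as the law of $V=\tfrac{1}{2}-\tfrac{Y}{2}$. Your extra steps (the translation-invariance reduction to the origin-started walk and the moment-uniqueness remark) simply make explicit what the paper's two-line proof leaves implicit.
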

\begin{proof} The probability distribution of any $N \in \mathbb{Z}_+$ ordered entries from $X_{T_\alpha}-x$ with $n$ entries one, from (\ref{GreendeF:00}), is 
\[
\mathbb{E}\big [V^n(1-V)^{N-n}\big ].
\]
This implies the statement of the corollary.
\end{proof}
\begin{Remark}\label{Y:000}
Proposition \ref{propde:90} is written in terms of a de Finetti model. However all that it requires for (\ref{GreendeF:00}) to be true is that if $\rho_k^{(N)}$ is an eigenvalue in the spectral expansion of the transition function of $(X_t)$ which has exchangeable entries then
there exists a random variable $Y$, which may depend on $N$, such that
\[
\mathbb{E}\big [Y^k\big ] 
= \frac{1}
{1 + \frac{\alpha}{1-\alpha}(1-\rho_k^{(N)})},\ k\in [N].
\]
In the simple random walk of Example \ref{secondex:00} 
\[
Y = \exp\Big \{-\frac{2\alpha}{N(1-\alpha)}U\Big \},
\]
where $U$ is exponential with mean 1. With a change of variable the density of $Y$ is
\[
{\scriptstyle\frac{N(1-\alpha)}{2\alpha}}y^{\frac{N(1-\alpha)}{2\alpha}-1},\ 0 < y < 1,
\]
in agreement with the calculation in Equation (5) of \cite{CG2025}.
\end{Remark}
\begin{Remark}\label{remarkmod:00}
We would like to understand a product of points taken from $\otimes_{k=1}^t\nu^\spin(d\xi_k)$.  Let $(\xi_k)$ be \iid  random variables with probability measure $\nu^\spin$. Suppose that $\nu^\spin$ does not have an atom at zero. The joint probability that $\prod_{k=1}^t\xi_k$ is non-negative and the Laplace transform of 
$-\log \prod_{k=1}^t|\xi_k|$ with parameter $\theta$ is
\begin{align}
&\mathbb{E}\Big [ \mathbb{I}\big \{\prod_{j=1}^t\xi_j \geq 0\big \}e^{-\theta \big (-\log \prod_{k=1}^t|\xi_k|\big )}\Big ]
\nonumber\\
&=\sum_{0\leq k \leq t: k\text{~even}}
{t\choose k}
\Big (\int_{[-1,0]}|\xi|^\theta\nu^\spin(d\xi)\Big )^k\Big (\int_{(0,1]}|\xi|^\theta\nu^\spin(d\xi) \Big )^{t-k}
\nonumber
\end{align}
which is the sum of coefficients of even powers of $s$ in
\begin{equation}
\Bigg (s\int_{[-1,0]}|\xi|^\theta\nu^\spin(d\xi)+ \int_{(0,1]}|\xi|^\theta\nu^\spin(d\xi)\Bigg )^t.
\label{modxx:00}
\end{equation}
 The marginal Laplace transform of $-\log \prod_{k=1}^t|\xi_k|$ is
\begin{equation}
\mathbb{E}\Big [e^{-\theta \big (-\log \prod_{k=1}^t|\xi_k|\big )}\Big ]
= \Bigg (\int_\spin|\xi|^\theta\nu^\spin(d\xi)\Bigg )^t.
\label{LT:74}
\end{equation}
If $\nu^\spin$ has a symmetric distribution around zero, then the sign of 
$\prod_{j=1}^t\xi_j$ is positive or negative with probability $\frac{1}{2}$, independently of 
 $-\log \prod_{k=1}^t|\xi_k|$.  With this symmetry
 \begin{align}
 \int_\spin(1-\xi^k)\nu^\spin(d\xi)
 = \begin{cases} 
 1&k\text{~odd}\\
 2\int_{(0,1]}(1-\xi^k)\nu^\spin(d\xi)&k \text{~even}
 \end{cases}.
 \label{symmetric:01}
 \end{align}
\end{Remark}
\begin{Proposition}\label{propkilled:00}
In the de Finetti random walk on ${\cal V}_\infty$ in Proposition \ref{propdefmeasures:00}
suppose there is a stopping time $T_\alpha$ with distribution (\ref{killing:00}). 
Suppose that $\nu^\spin$ and the initial measure $\varphi_0^\spin$ do not have atoms at zero.
Then the Laplace transform of $-\log \prod_{j=0}^{T_\alpha}|\xi_j|$ 
and probability of a positive/negative
sign of 
$\prod_{k=0}^{T_\alpha}\xi_k$
is 
\begin{align}
&\frac{1}{2}\Bigg ( 
1 + \frac{\alpha}{1-\alpha} \Big (\int_{[-1,1]}(1-|\xi|^\theta)\nu^\spin(d\xi) \Big )\Bigg )^{-1}
\cdot\Bigg (\int_{[-1,1]}|\xi|^\theta\varphi^\spin_0(d\xi)\Bigg )\nonumber\\
&\pm 
\frac{1}{2}(1-\alpha)\Bigg ( 
1 - \alpha \Big (-\int_{[-1,0]}|\xi|^\theta\nu^\spin(d\xi)  +\int_{(0,1]}|\xi|^\theta \nu^\spin(d\xi)\Big )\Bigg )^{-1}\nonumber\\
&~~~~~~\times\Bigg (-\int_{[-1,0]}|\xi|^\theta\varphi^\spin_0(d\xi)+\int_{(0,1]}|\xi|^\theta\varphi^\spin_0(d\xi)\Bigg ).
\label{LT:45}
\end{align}
The marginal Laplace transform of $-\log \prod_{j=0}^{T_\alpha}|\xi_j|$ is
\begin{equation}
\Bigg ( 1 + \frac{\alpha}{1-\alpha}\int_\spin\big (1-|\xi|^\theta\big )\nu^\spin(d\xi)\Bigg )^{-1}\Big (\int_{[-1,1]}|\xi|^\theta\varphi_0^\spin(d\xi)\Big )
\label{LT:46}
\end{equation}
and the marginal probability of a positive/negative
sign of 
$\prod_{k=0}^{T_\alpha}\xi_k$ is 
\begin{align}
\frac{1}{2}\Bigg (
1 \pm \frac{1}{1 + \frac{2\alpha}{1-\alpha}\nu\big( [\frac{1}{2},1]\big )}
\Big (1 -2 \varphi_0\big( [\frac{1}{2},1]\big )\Big )\Bigg ).
\label{spinprob:00}
\end{align}
\end{Proposition}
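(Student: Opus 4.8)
The plan is to reduce the joint quantity to a single geometric summation, exploiting that the absence of an atom at zero makes each $\mathrm{sign}(\xi_j)\in\{-1,+1\}$ well defined almost surely. Write $W_j=|\xi_j|^\theta$, so that $\exp\{-\theta(-\log\prod_{j=0}^{T_\alpha}|\xi_j|)\}=\prod_{j=0}^{T_\alpha}W_j$, and decompose the sign indicator as $\mathbb{I}\{\prod_{k}\xi_k\ge 0\}=\tfrac12(1+\prod_{k=0}^{T_\alpha}\mathrm{sign}(\xi_k))$, with the negative case given by $\tfrac12(1-\prod\mathrm{sign}(\xi_k))$. The two targets are therefore $\tfrac12\,\mathbb{E}[\prod W_j]\pm\tfrac12\,\mathbb{E}[\prod\mathrm{sign}(\xi_j)W_j]$. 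This is exactly the even/odd-power-of-$s$ split in Remark~\ref{remarkmod:00}: evaluating the generating polynomial (\ref{modxx:00}) at $s=1$ produces $\mathbb{E}[\prod W_j]$ and at $s=-1$ produces $\mathbb{E}[\prod\mathrm{sign}(\xi_j)W_j]$, so the half-sum and half-difference select the positive- and negative-sign pieces.

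Next I would evaluate each building block by conditioning on $T_\alpha$ and using independence, via the point-process description of Corollary~\ref{pgflcorr} in which $\xi_0\sim\varphi_0^\spin$ while $\xi_1,\dots,\xi_{T_\alpha}$ are \iid $\nu^\spin$. The unsigned product factorizes as $\mathbb{E}[\prod W_j]=\big(\int_\spin|\xi|^\theta\varphi_0^\spin(d\xi)\big)\,\mathbb{E}_{T_\alpha}[a^{T_\alpha}]$ with $a=\int_\spin|\xi|^\theta\nu^\spin(d\xi)$, and the signed product as $\mathbb{E}[\prod\mathrm{sign}(\xi_j)W_j]=b_0\,\mathbb{E}_{T_\alpha}[b^{T_\alpha}]$, where $b=-\int_{[-1,0]}|\xi|^\theta\nu^\spin(d\xi)+\int_{(0,1]}|\xi|^\theta\nu^\spin(d\xi)$ and $b_0$ is the same integral taken against $\varphi_0^\spin$. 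The geometric law (\ref{killing:00}) then supplies $\mathbb{E}_{T_\alpha}[c^{T_\alpha}]=(1-\alpha)/(1-\alpha c)$, which converges since $|c|\le 1$ and $\alpha\in(0,1)$.

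The simplification splits into the two cases. For the unsigned piece, writing $a=1-\int_\spin(1-|\xi|^\theta)\nu^\spin(d\xi)$ gives $1-\alpha a=(1-\alpha)\big(1+\tfrac{\alpha}{1-\alpha}\int_\spin(1-|\xi|^\theta)\nu^\spin(d\xi)\big)$, so the factor $(1-\alpha)$ cancels and one obtains the first factor of (\ref{LT:46}) multiplied by $\int_\spin|\xi|^\theta\varphi_0^\spin(d\xi)$; this is the first line of (\ref{LT:45}) and, on its own, the marginal transform (\ref{LT:46}). For the signed piece no such cancellation occurs, so $(1-\alpha)$ survives and $(1-\alpha)b_0/(1-\alpha b)$ matches the second and third lines of (\ref{LT:45}) verbatim. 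Combining the two with the weights $\tfrac12(1\pm\cdot)$ yields (\ref{LT:45}).

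Finally, (\ref{spinprob:00}) is the specialization $\theta=0$: every $W_j=1$, the unsigned piece equals $1$, and $b=\nu^\spin((0,1])-\nu^\spin([-1,0))=1-2\nu([\tfrac12,1])$ through the change of variable $\xi=1-2\omega$ together with the absence of an atom at $\omega=\tfrac12$, while $b_0=1-2\varphi_0([\tfrac12,1])$; substituting and simplifying $1-\alpha(1-2\nu([\tfrac12,1]))=(1-\alpha)(1+\tfrac{2\alpha}{1-\alpha}\nu([\tfrac12,1]))$ gives (\ref{spinprob:00}). I expect the only real care to be bookkeeping rather than analysis: keeping the initial $\varphi_0^\spin$ factor separate from the \iid $\nu^\spin$ factors, matching the positive sign to the $s=1$ evaluation and the overall sign to the $s=-1$ evaluation, and tracking which $(1-\alpha)$ factors cancel. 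Since each product is finite per realization and the geometric series converges for $\alpha\in(0,1)$, there is no substantive convergence obstacle.
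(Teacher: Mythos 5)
Your proposal is correct and follows essentially the same route as the paper: the paper forms the generating function $H(s)$ from (\ref{modxx:00}), sums geometrically over $T_\alpha$, and extracts the even/odd coefficient sums as $\tfrac12(H(1)\pm H(-1))$, which is exactly your decomposition $\tfrac12\,\mathbb{E}[\prod W_j]\pm\tfrac12\,\mathbb{E}[\prod\mathrm{sign}(\xi_j)W_j]$ phrased probabilistically, followed by the same $(1-\alpha)$ cancellation and the same $\theta=0$ specialization for (\ref{spinprob:00}).
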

\begin{proof}
From (\ref{modxx:00}) we require the sum of coefficients of even/odd powers of $s$ in
\begin{align*}
&H(s) := \sum_{t=0}^\infty(1-\alpha)\alpha^t
\Bigg (s\int_{[-1,0]}|\xi|^\theta\nu^\spin(d\xi)+ \int_{(0,1]}|\xi|^\theta\nu^\spin(d\xi)\Bigg )^t\nonumber\\
&~~~~~~\times\Bigg ({s}\int_{[-1,0]}|\xi|^\theta\varphi^\spin_0(d\xi)+ \int_{(0,1]}|\xi|^\theta\varphi^\spin_0(d\xi)\Bigg )\nonumber\\
&= (1-\alpha)\Bigg ( 
1 - \alpha \Big (s\int_{[-1,0]}|\xi|^\theta\nu^\spin(d\xi)  +\int_{(0,1]}|\xi|^\theta \nu^\spin(d\xi)\Big )\Bigg )^{-1}\nonumber\\
&~~~~~~\times\Bigg (s\int_{[-1,0]}|\xi|^\theta\varphi^\spin_0(d\xi)+ \int_{(0,1]}|\xi|^\theta\varphi^\spin_0(d\xi)\Bigg )
\end{align*}
 which is equal to (\ref{LT:45}). 
 In the calculation the even coefficient sum is $\frac{1}{2}\big (H(1)+H(-1)\big )$ and the odd coefficient sum is $\frac{1}{2}\big (H(1)-H(-1)\big )$, which simplify to (\ref{LT:45}) knowing
 \begin{align*}
H(1)&=\Bigg ( 
1 + \frac{\alpha}{1-\alpha} \Big (\int_{[-1,1]}(1-|\xi|^\theta)\nu^\spin(d\xi) \Big )\Bigg )^{-1}
\cdot\Bigg (\int_{[-1,1]}|\xi|^\theta\varphi^\spin_0(d\xi)\Bigg ),\\
H(-1) &= (1-\alpha)\Bigg ( 
1 - \alpha \Big (-\int_{[-1,0]}|\xi|^\theta\nu^\spin(d\xi)  +\int_{(0,1]}|\xi|^\theta \nu^\spin(d\xi)\Big )\Bigg )^{-1}\nonumber\\
&~~~~~~\times\Bigg (-\int_{[-1,0]}|\xi|^\theta\varphi^\spin_0(d\xi)+\int_{(0,1]}|\xi|^\theta\varphi^\spin_0(d\xi)\Bigg ).
\end{align*}
 The marginal probability of a positive/negative
sign of 
$\prod_{k=0}^{T_\alpha}\xi_k$
is 
\begin{align*}
& \frac{1}{2}\Bigg [ 1 \pm
(1-\alpha)\Bigg ( 
1 - \alpha \Big (-\nu^\spin([-1,0])  + \nu^\spin((0,1])\Big )\Bigg )^{-1}\Bigg ]\nonumber\\
&~~~~~~\times\Bigg (-\varphi^\spin_0([-1,0])+\varphi^\spin_0((0,1])\Bigg )\\
& =\frac{1}{2}\Bigg [ 1 \pm
(1-\alpha)\Bigg ( 
1 - \alpha  + 2\alpha\nu^\spin([-1,0])\Bigg )^{-1}\Bigg ]\\
&~~~~~~\times\Bigg (1 -2 \varphi^\spin_0([-1,0])\Bigg )
\end{align*}
which simplifies to (\ref{spinprob:00}), recalling that $[-1,0]$ in the spin measures on $\spin$ corresponds to $[\frac{1}{2},1]$ in the measures on $[0,1]$.
\end{proof}
\begin{Example}\label{Betaexample:00}
A particular example is when $\nu^\spin$ is symmetric around zero and $2\nu^\spin$ is a Beta$(a,b)$ measure on $[0,1]$ with $a,b>0$. The Laplace transform (\ref{LT:74}) is then equal to
\begin{align}
\mathbb{E}\Big [e^{-\theta \big (-\log \prod_{k=1}^t|\xi_k|\big )}\Big ]
&= \Bigg (\frac{\Gamma(a+\theta)\Gamma(a+b)}{\Gamma(a+b+\theta)\Gamma(a)}\Bigg )^t\label{Betaex:01}\\
&= \Big ( 1 + \frac{\theta}{a+b-1}\big )^{-t} \cdots \Big (1 + \frac{\theta}{a} \Big )^{-t}.
\label{Betaex:00}
\end{align}
Equation (\ref{Betaex:01}) holds generally, and (\ref{Betaex:00}) when $b\geq 1$ is an integer.
The transform of (\ref{Betaex:00}) is of a convolution of Gamma random variables.
Let $U_1,\ldots U_b$ be \iid  Gamma$(t)$ random variables. Then
\begin{equation*}
\prod_{j=1}^t|\xi_j| =^{\cal D}e^{- \sum_{j=0}^{b-1}\frac{1}{a+j}U_j}
\label{prodgamma:02}
\end{equation*}
and $\prod_{j=1}^t\xi_j$ has the same distribution with a random $\pm 1$ sign.
If $b=1$, so
\[
\nu^\spin(d\xi) = \frac{1}{2}a|\xi|^{a-1}d\xi, \ \xi\in (-1,1),
\]
then there is only one term in (\ref{Betaex:00}) and the Laplace transform corresponds 
to $a^{-1}U_1$. The density of $\zeta:=\prod_{j=1}^{t}\xi_j$ is then 
\begin{equation*}
g_t(\zeta) = \frac{a}{2\Gamma (t)}|\zeta|^{a-1}\Big (-a \log |\zeta|\Big )^{t-1},\ \zeta \in (-1,1).
\label{prodgamma:00}
\end{equation*}
If $X_0=0$ then $\zeta_\alpha := \prod_{j=1}^{T_\alpha}\xi_j$ has an atom of $1-\alpha$ at unity and 
continuous density of
\begin{equation*}
\alpha\frac{a}{2\Gamma (t)}|\zeta_\alpha|^{a-1}\frac{(1-\alpha)}{1 - \alpha(-a\log|\zeta_\alpha|)},
\ \zeta_\alpha \in (-1,1).
\label{prodgamma:03}
\end{equation*}
$Y = \frac{1}{2}\Big (1 - \prod_{j=1}^{T_\alpha}\xi_j\Big )$ has an atom of $1-\alpha$ at zero and continuous density of
\begin{equation*}
\alpha\frac{a}{4\Gamma (t)}|1-2y|^{a-1}\frac{(1-\alpha)}{1 - \alpha(-a\log|1-2y|)},
\ y \in (0,1).
\label{prodgamma:04}
\end{equation*}
A general representation in terms of Gamma random variables extending the case to when $b$ is not an integer can be found. Euler's infinite product formula 6.1.3 in \cite{AS1972}, for example, is
\begin{equation} 
\frac{1}{\Gamma(z)} = ze^{\gamma z}\prod_{k=1}^\infty\big ( 1 + \frac{z}{k}\big )e^{-\frac{z}{k}},
\label{Euler:00}
\end{equation}
where $\gamma$ is Euler's constant. From (\ref{Euler:00})
\begin{equation*}
\frac{\Gamma(a+b)}{\Gamma(a+b+\theta)}
= \big ( 1 + \frac{\theta}{a+b}\big ) e^{-\gamma \theta}
\prod_{k=1}^\infty\big ( 1 + \frac{\theta}{a+b+k}\big )e^{-\frac{\theta}{k}},
\end{equation*}
with a similar product for $\frac{\Gamma(a)}{\Gamma(a+\theta)}$. 
It follows that for $t\in \mathbb{Z}_+$,
\begin{align}
&\Bigg (\frac{\Gamma(a+\theta)\Gamma(a+b)}{\Gamma(a+b+\theta)\Gamma(a)}\Bigg )^t\nonumber \\
&=\prod_{k=0}^\infty \Bigg (\frac{1 + \frac{\theta}{a+b+k}}{1 + \frac{\theta}{a+k}}\Bigg )^t\label{Euler:02}\\
&= \prod_{k=0}^\infty \Big (\frac{a+k}{a+b+k}\Big )^t
\cdot \Big ( 1 - \frac{b}{a+b+k}\Big (1 + \frac{\theta}{a+b+k}\Big )^{-1}\Big )^{-t}.
\label{Euler:01}
\end{align}
Note that the product (\ref{Euler:02}) converges because
\begin{equation}
\log \Big (1 + \frac{\theta}{a+b+k}\Big )
-\log \Big (1 + \frac{\theta}{a+k}\Big ) 
= \frac{-b}{(a+b+k)(a+k)} + R_k,
\label{series:66}
\end{equation}
where 
\begin{align*}
R_k &\leq \Big |\log \Big (1 + \frac{\theta}{a+b+k}\Big ) - \frac{\theta}{a+b+k}\Big |
+ \Big |\log \Big (1 + \frac{\theta}{a+k}\Big ) - \frac{\theta}{a+k}\Big |\\
&\leq \frac{\theta^2}{2}\Big (\frac{1}{(a+b+k)^2} + \frac{1}{(a+k)^2}\Big ).
\end{align*}
Therefore a series with terms (\ref{series:66}) converges and the product (\ref{Euler:02}) converges.
Let $\big (W_{kj}\big )_{k,j\in \mathbb{N}}$ be \iid random variables such that
$W_{kj}$ is Gamma$\Big (\frac{1}{a+b+k},j\Big )$ and $\big (M_{tk}\big )_{k,\in \mathbb{N}}$ be \iid negative binomial random variables with \pgf\emph{s}
\[
\mathbb{E}\Big [s^{M_{tk}}\Big ] =
\Big (\frac{a+k}{a+b+k}\Big )^t\Big (1 - \frac{b}{a+b+k}s\Big )^{-t},\ k \in \mathbb{N}.
\] 
Then (\ref{Euler:01}) is the Laplace transform of $\sum_{k=0}^\infty W_{kM_{tk}}$ and
a representation is that
\begin{equation*}
\prod_{j=1}^t|\xi_j| = e^{-\sum_{k=0}^\infty W_{kM_{tk}}}.
\label{complicated:77}
\end{equation*}
\end{Example}
\begin{Remark}
Let $X_0=0$ corresponding to $\varphi_0^\spin = \delta_{\{1\}}$ ($\varphi_0 = \delta_{\{0\}}$). If $\alpha \to 1$ and 
$\nu^\spin \to \delta_{\{0\}}$ such that 
\begin{align*}
\frac{\alpha}{1-\alpha}\int_\spin\big (1-|\xi|^\theta\big )\nu^\spin(d\xi)
\to \int_\spin\big (1-|\xi|^\theta\big )\nu_*^\spin(d\xi) < \infty
\end{align*}
for a non-negative measure $\nu_*^\spin$, and the limit tends to zero as $\theta \downarrow 0$, then the Laplace transform (\ref{LT:46}) converges to 
a proper Laplace transform 
\begin{equation*}
\Bigg ( 1 + \int_\spin\big (1-|\xi|^\theta\big )\nu_*^\spin(d\xi)\Bigg )^{-1}.
\label{LT:46a}
\end{equation*}
\end{Remark}
\begin{Remark}\label{Zrandomwalk:00}
The geometric random variable $T_{\alpha}$ with \pgf $(1-\alpha)(1-\alpha s)^{-1}$ is infinitely divisible. Let $T_{\alpha,\phi}$ be a negative binomial random variable with \pgf
$(1-\alpha)^\phi(1-\alpha s)^{-\phi}$, $\phi > 0$. Then
\begin{equation*}
\mathbb{P}\big (T_{\alpha,\phi} = k\big ) = (1-\alpha)^\phi\alpha^k\frac{\phi_{(k)}}{k!},\ k\in \mathbb{N},
\label{Tid:000}
\end{equation*}
where $\phi_{(k)} = \phi(\phi+1)\cdots (\phi+k-1)$. $T_{\alpha,\frac{1}{2}}$ is important in this paper. 

A nice connection of $T_{\alpha,\frac{1}{2}}$ with a simple random walk on $\mathbb{Z}$ is the following.
Let
$S_0=0$, $S_k=V_1+\cdots +V_k$ where $(V_k)$ are \iid random variables with
$\mathbb{P}\big (V=1) = p$, $\mathbb{P}\big (V=-1) = q=1-p$. Choose $4pq = \alpha$, where $\alpha \in (0,1)$ with $p>q$. A calculation gives that $p=\frac{1}{2}\big ( 1 + \sqrt{1-\alpha})$.
Write
\begin{align*}
\mathbb{P}\big (T_{\alpha,\frac{1}{2}}= k\big )
=  (1-\alpha)^{\frac{1}{2}}
\frac{\Big (\frac{1}{2}\Big )_{(k)}}{k!}\alpha^k
= (p-q){2k\choose k}(pq)^k
= (p-q)\mathbb{P}\big (S_{2k}=0\big ).
\end{align*}
The random walk $(S_k)$ is transient because $p > q$, with the probability of a return to the origin of $1 - (p-q)$. 
Eventually the random walk will drift to positive infinity. $\big \{T_{\alpha,\frac{1}{2}}=k\big \}$ is identified as equivalent to a last return to the origin in the random walk at $2k$. That is
\begin{align*}
\mathbb{P}\big (T_{\alpha,\frac{1}{2}}= k\big )
 &= \mathbb{P}\big (S_{2k}=0, S_j \ne 0,\ j > 2k \big ) \nonumber \\
 &=  \mathbb{P}\big (S_{2k}=0, S_j > 0,\ j > 2k \big ).
\end{align*}
Random walk concepts are well explained in \cite{F1991} XI.3 and XIV.4.
\end{Remark}
\begin{Remark}\label{remid:00}
The \pgfl (\ref{pgfl:2600a}) is infinitely divisible in the sense that
\begin{equation}
G_\phi[f]=
\frac{1}{\Big (1+\frac{\alpha}{1-\alpha}\int_{[-1,1]}(1-f(\xi))\nu^\spin(d\xi)\Big )^\phi}
\label{infdiv:00}
\end{equation}
is a \pgfl for all $\phi > 0$. The proof is very similar to that in Proposition \ref{propde:90} when $T_\alpha$ is replaced by $T_{\alpha,\phi}$. 
Alternatively it is seen as a mixed Poisson process \pgfl
\begin{equation*}
\int_0^\infty
\frac{1}{\Gamma(\phi)}\lambda^{\phi - 1}
e^{-\lambda}e^{-\lambda \frac{\alpha}{1-\alpha}\int_{[-1,1]}(1-f(\xi))\nu^\spin(d\xi)}
d\lambda\ .
\label{Poissonphi:00}
\end{equation*}
The \pgfl (\ref{infdiv:00}) arises from a negative binomial point process, originally from  \cite{G1984}. More details about the process are in Example 6.4(b) in \citet{DVJ2003}.
Let $Y_\phi$ be the product of the points $(\xi_{\phi,j})$ in a point process with \pgfl
(\ref{infdiv:00}), then 
\begin{align}
\mathbb{E}\big [Y_\phi^k] &= 
\frac{1}{\Big (1+\frac{\alpha}{1-\alpha}\int_{[-1,1]}(1-\xi^k)\nu^\spin(d\xi)\Big )^\phi}.
\nonumber
\end{align}
A surprising property is that
\begin{equation*}
\mathbb{E}\big [Y^k_\phi\big ] = \mathbb{E}\big [Y^k\big ]^\phi,\ k \in \mathbb{N}.
\label{idproperty:00}
\end{equation*}
If $\nu^\spin$ is symmetric about zero, then from (\ref{symmetric:01}), 
$\mathbb{E}\big [Y^k_\phi\big ]=1$ if $k$ is odd, and if $k$ is even
\begin{equation*}
\mathbb{E}\big [Y^k_\phi\big ]=
\frac{1}{
\Big (1+\frac{\alpha}{1-\alpha}
\big (\nu^\spin(\{0\}\big ) 
+ 2\int_{(0,1]}(1-\xi^k)\nu^\spin(d\xi)\big )\Big )^\phi
}.
\label{symmetry:002}
\end{equation*}
\end{Remark}
\begin{Proposition}\label{deF:340}
Let $(g_x)$ be a Gaussian field on ${\cal V}_\infty$ with covariance function (\ref{GreendeF:00}) in Proposition \ref{propde:90}. The elements of $Z$ in the random walk (\ref{rw:000}) are $N$ random variables from a de Finetti sequence with measure $\nu$ and $X_0$ is taken to have all entries zero.
Suppose that $\nu$ does not have an atom at unity. Then there is a random variable 
$Y_{ \frac{1}{2} }\in (-1,1]$ which is the product of points in a point process with \pgfl
\[
G_{\frac{1}{2}}[f] = 
\frac{1}{\sqrt{\Big (1+\frac{\alpha}{1-\alpha}\int_{[-1,1]}\big (1-f(\xi)\big)\nu^\spin(d\xi)\Big )}}
\]
such that
\begin{equation}
g_x
=  2^{-\frac{N}{2}}\Big \{g_\emptyset +
\sum_{k=1}^{N}
 \mathbb{E}\big [Y_{ \frac{1}{2} }^k\big ]
 \cdot S_k\big (x;[N],(\xi_A)\big )\Big \}\ .
\label{exchcirc:00ab}
\end{equation}
Let 
\begin{equation*}
p_k^{(N)} = {N\choose k}
\frac{\mathbb{E}\big [Y_{ \frac{1}{2} }^k\big ]}
{\mathbb{E}\big [\big ( 1 + Y_{ \frac{1}{2} }\big )^N\big ]},\ k=0,1,\ldots, N.
\label{pkN:00}
\end{equation*}
A probabilistic construction of $(g_x)$ is the following:
\begin{itemize}
\item[(a)]Choose a random subset of size $K=k$ from $[N]$ with probability $p_k^{(N)}$;
\item[(b)] For all $x\in {\cal V}_N$ return  coupled $K$-spin Gaussian variables
\begin{equation}
 g_x = 2^{-\frac{N}{2}}\mathbb{E}\big [\big ( 1 + Y_{ \frac{1}{2} }\big )^N\big ]\times
{N\choose k}^{-1}\cdot S_k\big (x;[N],(\xi_A)\big )\ .
\label{randomKspin:00}
\end{equation}
\end{itemize}
\end{Proposition}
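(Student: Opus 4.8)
The plan is to obtain the representation (\ref{exchcirc:00ab}) directly from the strong decomposition (\ref{exchcirc:00a}) of Proposition \ref{prop:0} by reinterpreting its coefficients as moments of a half-order product variable $Y_{\frac{1}{2}}$, and then to recast the resulting finite sum as a convex combination of rescaled pure $k$-spin fields, which delivers the recipe (a)--(b). First I would invoke the infinite divisibility of $G[f]$ established in Remark \ref{remid:00}: taking $\phi=\tfrac{1}{2}$ there shows that $G_{\frac{1}{2}}[f]=G[f]^{1/2}$ is itself a genuine \pgfl, so a point process with this functional exists and $Y_{\frac{1}{2}}$, the product of its points, is well defined. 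The same remark supplies the moment identity $\mathbb{E}\big[Y_{\frac{1}{2}}^k\big]=\mathbb{E}\big[Y^k\big]^{1/2}$, and by Proposition \ref{propde:90} this equals $\big(1+\tfrac{\alpha}{1-\alpha}(1-\rho_k)\big)^{-1/2}$, which is exactly the coefficient of $S_k$ in (\ref{exchcirc:00a}). Substituting this identification term by term turns (\ref{exchcirc:00a}) into (\ref{exchcirc:00ab}), the $k=0$ contribution $g_\emptyset$ being recovered since $\mathbb{E}\big[Y_{\frac{1}{2}}^0\big]=1$. The hypothesis that $\nu$ has no atom at unity enters only to guarantee that $\nu^\spin$ carries no atom at $-1$, so that $Y_{\frac{1}{2}}\in(-1,1]$ rather than possibly equalling $-1$.

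For the construction I would first check that $\big(p_k^{(N)}\big)$ is a probability distribution. Nonnegativity is immediate, since each $\mathbb{E}\big[Y_{\frac{1}{2}}^k\big]>0$ (it is the reciprocal square root of a quantity that is at least $1$ because $\rho_k\in[-1,1]$), and the binomial theorem gives $\sum_{k=0}^N{N\choose k}\mathbb{E}\big[Y_{\frac{1}{2}}^k\big]=\mathbb{E}\big[(1+Y_{\frac{1}{2}})^N\big]$, so that $\sum_k p_k^{(N)}=1$. Then I would factor $\mathbb{E}\big[(1+Y_{\frac{1}{2}})^N\big]$ out of (\ref{exchcirc:00ab}) and write $\mathbb{E}\big[Y_{\frac{1}{2}}^k\big]=p_k^{(N)}\,\mathbb{E}\big[(1+Y_{\frac{1}{2}})^N\big]\,{N\choose k}^{-1}$, which recasts (\ref{exchcirc:00ab}) as $g_x=\sum_{k=0}^N p_k^{(N)}\,\widetilde g_x^{(k)}$, where $\widetilde g_x^{(k)}$ is precisely the right-hand side of (\ref{randomKspin:00}). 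This is exactly the two-step recipe: the weights $p_k^{(N)}$ govern the choice of spin order $K$, and (\ref{randomKspin:00}) is the rescaled pure $K$-spin field returned.

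The point requiring care is the sense in which this recipe reproduces $(g_x)$, which I would settle at the level of second moments since every $\widetilde g_x^{(k)}$ is centered. Using that the spins are uncorrelated across distinct $k$ and that $\text{\rm Cov}\big(S_k(x;[N],(\xi_A)),S_k(y;[N],(\xi_A))\big)={N\choose k}Q_k(\|y-x\|;N,\tfrac{1}{2})$, the covariance of $\sum_k p_k^{(N)}\widetilde g_x^{(k)}$ collapses to $2^{-N}\sum_{k}{N\choose k}\mathbb{E}\big[Y_{\frac{1}{2}}^k\big]^2 Q_k(\|y-x\|;N,\tfrac{1}{2})=2^{-N}\sum_k{N\choose k}\mathbb{E}\big[Y^k\big]Q_k(\|y-x\|;N,\tfrac{1}{2})$, which is $(1-\alpha)G(x,y;\alpha)$ by (\ref{yg:000}). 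I expect this bookkeeping—keeping the normalizer $\mathbb{E}\big[(1+Y_{\frac{1}{2}})^N\big]$ straight, recognising the recipe as a convex combination of independent pure-spin fields, and applying $\mathbb{E}\big[Y_{\frac{1}{2}}^k\big]^2=\mathbb{E}\big[Y^k\big]$ at the right moment—to be the main, though essentially routine, obstacle, since the genuine analytic work of exhibiting $Y_{\frac{1}{2}}$ has already been carried out in the infinite-divisibility statement of Remark \ref{remid:00}.
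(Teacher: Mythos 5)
Your proposal is correct and takes essentially the same route as the paper's proof: the representation (\ref{exchcirc:00ab}) is read off from the decomposition (\ref{exchcirc:00a}) of Proposition \ref{prop:0} together with the identity $\mathbb{E}\big[Y_{\frac{1}{2}}^k\big]=\big(1+\tfrac{\alpha}{1-\alpha}(1-\rho_k)\big)^{-1/2}$ coming from the infinite divisibility in Remark \ref{remid:00}, and the construction (a)--(b) is then obtained by algebraic regrouping. You merely make explicit the bookkeeping the paper declares "clear" --- that $\big(p_k^{(N)}\big)$ is a probability distribution, that the recipe is the convex combination $g_x=\sum_k p_k^{(N)}\widetilde g_x^{(k)}$, and that the covariance reproduces $(1-\alpha)G(x,y;\alpha)$ --- which is a faithful, slightly more detailed rendering of the same argument.
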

\begin{proof}
Equation (\ref{randomKspin:00}) follows from (\ref{exchcirc:00}), (\ref{exchcirc:00a}) in Proposition \ref{prop:0}
and the construction
\[
\mathbb{E}\big [Y_{ \frac{1}{2} }^k\big ] = 
 \frac {1}{\sqrt{1 + \frac{\alpha}{1-\alpha}(1-\rho_k)}}
\]
in Remark \ref{remid:00}. If $\nu$ does not have an atom at unity, than $\nu^\spin$ does not have an atom at $-1$, making $\mathbb{E}\big [\big ( 1 + Y_{ \frac{1}{2} }\big )^N\big ]>0$.
The probabilistic construction of $(g_x)$ is clear and interesting. Note that $(g_x)$ are coupled with the same $K$ in the construction.
\end{proof}
\begin{Remark}
The connection with $T_{\alpha,\frac{1}{2}}$ and a simple random walk on $\mathbb{Z}$ in Remark \ref{Zrandomwalk:00} shows another construction of $Y_{\frac{1}{2}}$. Let $(\xi_j)$ be \iid random variables in $[-1,1]$ with probability measure $\nu^\spin$. Let the random variable ${\mathfrak T}$ be equal to $k$ when the last return to the origin is at $2k < \infty$. Then $Y_{\frac{1}{2}}$ is the product of points in a point process (with $\alpha = 4pq$) with \pgfl
\[
G_{\frac{1}{2}}[f] = \mathbb{E}\big [\prod_{j=0}^{\mathfrak T}f(\xi_j)\big ]
= \frac{1}{\sqrt{\Big (1+\frac{4pq}{1-4pq}\int_{[-1,1]}\big (1-f(\xi)\big)\nu^\spin(d\xi)\Big )}}.
\]
The construction requires that $(S_n)$ be run forever to determine the last return to zero.
\end{Remark}
\subsection{Level set sums}\label{subsection:level}
Recall that level set sums from the origin are defined in Corollary \ref{symmetrycorr:000} as 
\[
\vartheta_v^{(N)}
= \sum_{y:\|y\|=v}g_y^{(N)},\ v\in \mathbb{R}_+.
\]
In Proposition \ref{proplevel:00} assume that there is a non-negative measure $\nu_*^\spin$ such that
\begin{equation}
\vartheta^{(N)}_{v} = {N\choose v}
 \frac{1}{2^{N/2}}\Big \{g_\emptyset +
\sum_{k=1}^{N}
\mathbb{E}\big [Y^k_{\frac{1}{2}}\big ]
 \sqrt{{N\choose k}}Q_k\big (v;N,\frac{1}{2}\big )\cdot \zeta_k\Big \},\ v \in \mathbb{N},
\label{symmetry:001a}
\end{equation}
where $(\zeta_k)$ are \iid N$(0,1)$. $Y$ and $Y_{\frac{1}{2}}$ are distributed respectively as the product of points in  \pgfl s
\begin{equation}
G\big [f\big ]=\frac {1}{1 + \int_\spin (1-f(\xi))\nu_*^\spin(d\xi)}
\label{Gstar:00}
\end{equation}
and $G_{\frac{1}{2}}[f]=G[f]^{\frac{1}{2}}$. The covariance function for $(\vartheta^{(N)}_{v})$ is for
$u,v \in \mathbb{N}$
\begin{equation}
\text{\rm Cov}\big (\vartheta^{(N)}_{u},\vartheta^{(N)}_{v}\big )
= {N\choose u}{N\choose v}
 \frac{1}{2^N}\Big \{1+
\sum_{k=1}^{N}
\mathbb{E}\big [Y^k\big ]
{N\choose k}Q_k\big (u;N,\frac{1}{2}\big )Q_k\big (v;N,\frac{1}{2}\big )\Big \},
 \label{covstarlevel:00}
\end{equation}
noting the unusual equality $\mathbb{E}\big [Y^k\big ] = \mathbb{E}\big [Y_{\frac{1}{2}}^k\big ]^2$. 
A limit form for level sets will be found. The limit uses a central limit theorem for the index in the in the level sets. Hermite-Chebycheff polynomials $(H_k)_{k=0}^\infty$, orthogonal on the standard normal, are used in a Gaussian process representation.  A generating function for them is
\[
\sum_{k=0}^\infty \frac{1}{k!}w^kH_k(t) = e^{wt - \frac{1}{2}t^2}.
\]
\begin{Proposition}\label{proplevel:00}
Define a Gaussian process 
$(\varkappa_t)_{t\in \mathbb{R}}$ by
\begin{align}
\varkappa_t
&= \frac{1}{\sqrt{2\pi}}e^{-\frac{1}{2}t^2}
\sum_{k=0}^\infty \mathbb{E}\big [Y_{\frac{1}{2}}^k\big ]
\frac{1}{\sqrt{k!}}H_k(t)\cdot \zeta_k,
\label{decomposition:22}
\end{align}
where $(\zeta_k)$ are \iid N$(0,1)$ random variables.
The covariance function of $\big (\varkappa_t\big )$ is
\begin{align}
\text{\rm Cov}\big (\varkappa_t, \varkappa_s\big )
&= \frac{1}{2\pi}e^{-\frac{1}{2}(t^2+s^2)}
 \Big \{ 1 + \sum_{k=1}^\infty
\mathbb{E}\big [Y^k\big ]\frac{1}{k!}H_k(t)H_k(s)\Big \}\nonumber\\
&=\mathbb{E}\big [\mathfrak{n}(t,s;Y)\big ],
\label{normalmix:000}
\end{align}
where $\mathfrak{n}(t,s;\rho)$ is the standard bivariate normal density with correlation coefficient $\rho$. $Y$ is the product of points in a point process with \pgfl  (\ref{Gstar:00}).
The scaled Gaussian field  $
\frac{1}{2}\sqrt{\frac{N}{2^N}}\Big (\vartheta^{(N)}_{\lfloor \frac{N}{2}+ \frac{\sqrt{N}}{2}t\rfloor }\Big )_{\lvert t \rvert < \sqrt{N}}$ converges weakly to $(\varkappa_t)$.
Weak convergence is in the sense of finite-dimensional distributions converging.
\end{Proposition}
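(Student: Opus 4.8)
The plan is to use that both $(\varkappa_t)$ and the rescaled level--set fields $\tfrac12\sqrt{N/2^N}\,\vartheta^{(N)}_{\lfloor N/2+\sqrt N t/2\rfloor}$ are centred and jointly Gaussian, so that convergence of finite--dimensional distributions is equivalent to convergence of the covariance functions. The proof then splits into two covariance computations: first identify $\text{\rm Cov}(\varkappa_t,\varkappa_s)$ with $\mathbb{E}[\mathfrak{n}(t,s;Y)]$, and then show that the rescaled finite--$N$ covariances converge to the same limit.

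For the limit process, the definition (\ref{decomposition:22}), independence of the $(\zeta_k)$, and the identity $\mathbb{E}\big[Y_{\frac12}^k\big]^2=\mathbb{E}\big[Y^k\big]$ give the first line of (\ref{normalmix:000}). The key is Mehler's bilinear formula for the Hermite--Chebyshev polynomials, which for $|\rho|<1$ reads
\[
\frac{1}{2\pi}e^{-\frac12(t^2+s^2)}\sum_{k=0}^\infty\frac{\rho^k}{k!}H_k(t)H_k(s)
=\frac{1}{2\pi\sqrt{1-\rho^2}}\exp\Big(-\frac{t^2-2\rho ts+s^2}{2(1-\rho^2)}\Big)=\mathfrak{n}(t,s;\rho).
\]
Substituting $\rho=Y$ and taking expectation yields $\mathbb{E}[\mathfrak{n}(t,s;Y)]$; the interchange of expectation and summation is justified by dominated convergence once the contribution of any mass of $Y$ near $\pm1$ is controlled, where $\mathfrak{n}$ degenerates onto the diagonals $s=\pm t$.

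For the finite--$N$ side I would condition on $Y$ rather than expand the series (\ref{covstarlevel:00}) termwise. Writing $V=\tfrac12(1-Y)$, the Green--function representation (\ref{GreendeF:00}) identifies $\text{\rm Cov}(g_x,g_y\mid Y)=V^{\|y-x\|}(1-V)^{N-\|y-x\|}$ with the probability that a Bernoulli$(V)^{\otimes N}$ vector $\eta$ equals $y-x\bmod 2$. Summing over the two level sets gives, with $u=\lfloor N/2+\sqrt N t/2\rfloor$ and $v=\lfloor N/2+\sqrt N s/2\rfloor$,
\[
\text{\rm Cov}\big(\vartheta^{(N)}_u,\vartheta^{(N)}_v\mid Y\big)={N\choose u}\,\mathbb{P}\big(\|x_0+\eta\|=v\mid Y\big),\qquad \|x_0\|=u.
\]
Here $\|x_0+\eta\|=B_1+B_2$ with $B_1\sim\text{Bin}(u,1-V)$ and $B_2\sim\text{Bin}(N-u,V)$ independent, of conditional mean $N/2+\tfrac{\sqrt N}{2}tY$ and variance $NV(1-V)$, where $4V(1-V)=1-Y^2$. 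The local central limit theorem for $B_1+B_2$ gives $\tfrac{\sqrt N}{2}\,\mathbb{P}(\|x_0+\eta\|=v\mid Y)\to(2\pi(1-Y^2))^{-1/2}\exp\!\big(-(s-tY)^2/(2(1-Y^2))\big)$, while the local CLT for the Binomial$(N,\tfrac12)$ weight gives $\tfrac{\sqrt N}{2}2^{-N}{N\choose u}\to(2\pi)^{-1/2}e^{-t^2/2}$. Multiplying these two factors and simplifying the exponent $-\tfrac{t^2}{2}-\tfrac{(s-tY)^2}{2(1-Y^2)}=-\tfrac{t^2-2Yst+s^2}{2(1-Y^2)}$ shows that the rescaled conditional covariance converges to $\mathfrak{n}(t,s;Y)$; integrating over $Y$ by dominated convergence reproduces $\mathbb{E}[\mathfrak{n}(t,s;Y)]$, matching the limit covariance. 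The series expansion (\ref{covstarlevel:00}) provides an independent check, since the Krawtchouk--Hermite limit $\sqrt{{N\choose k}}\,Q_k(u;N,\tfrac12)\to(-1)^k(k!)^{-1/2}H_k(t)$, obtained by setting $\phi=-w/\sqrt N$ in the generating function (\ref{kgf:001}) and matching the exponent $tw-\tfrac12w^2$, makes the sign $(-1)^k$ square away in the product $Q_k(u)Q_k(v)$ and recovers termwise the Mehler series of (\ref{normalmix:000}).

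The main obstacle is uniformity, not the pointwise limits. In the conditional approach the Bernoulli variance $NV(1-V)$ degenerates as $Y\to\pm1$, so the conditional local central limit theorem fails uniformly there, precisely in the regime where $\mathfrak{n}(t,s;\pm1)$ concentrates on the diagonals; this forces a separate treatment of $Y$ near the endpoints and a dominating bound valid up to the boundary, which is where I would invest the most care and use the finiteness of $\int_\spin(1-|\xi|^k)\nu_*^\spin(d\xi)$ to control any boundary mass. The equivalent difficulty in the series picture is that $0\le\mathbb{E}[Y^k]\le1$ alone, combined with the Krawtchouk orthonormality $\sum_{k}{N\choose k}Q_k(u;N,\tfrac12)^2=2^N{N\choose u}^{-1}$, does not give a summable dominating bound, so termwise passage to the limit would itself require the decay of $\mathbb{E}[Y^k]$; this is exactly what the conditioning argument circumvents.
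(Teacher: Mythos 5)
Your proposal is correct, and it reaches the limit covariance by a genuinely different route than the paper. The paper's proof stays entirely inside the spectral series: it takes the representation (\ref{symmetry:001a}), evaluates it at $v=\lfloor \frac{N}{2}+\frac{\sqrt{N}}{2}t\rfloor$, and invokes two pointwise limits --- the binomial local limit ${N\choose \lfloor \frac{N}{2}+\frac{\sqrt N}{2}t\rfloor}2^{-N}\frac{\sqrt N}{2}\to \frac{1}{\sqrt{2\pi}}e^{-t^2/2}$ and the Krawtchouk-to-Hermite limit $\sqrt{{N\choose k}}Q_k(\lfloor \frac{N}{2}+\frac{\sqrt N}{2}t\rfloor;N,\frac12)\to \frac{1}{\sqrt{k!}}H_k(t)$ cited from the Diaconis--Griffiths paper --- and then asserts that termwise convergence of (\ref{covstarlevel:00}) to (\ref{normalmix:000}) suffices, since for centred Gaussian fields covariance convergence gives convergence of finite-dimensional distributions. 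Your argument shares that Gaussian reduction and the Mehler identification of the limit (which the paper uses implicitly in stating $\mathbb{E}[\mathfrak{n}(t,s;Y)]$), but replaces the termwise series limit by conditioning on $Y$: the mixture form of the covariance coming from (\ref{GreendeF:00}) (equivalently, summing the generating function (\ref{kgf:001}) against $Y^k$) turns the level-set covariance into a Bernoulli hitting probability, and two local central limit theorems then produce $\mathfrak{n}(t,s;Y)$ directly, with the exchange of limit and expectation reduced to dominated convergence in the single variable $Y$. What each approach buys: the paper's proof is shorter and leans on known polynomial asymptotics, but it silently interchanges the $N\to\infty$ limit with a sum whose number of terms grows with $N$, exactly the gap you identify (boundedness $0\le\mathbb{E}[Y^k]\le 1$ alone does not dominate the Hermite growth); your conditioning argument makes that interchange transparent, at the cost of having to handle the degenerate region $Y$ near $\pm1$, which is the same integrability issue the paper itself only waves at (its opening claim that (\ref{normalmix:000}) is ``well defined and finite'' genuinely requires control of the mass of $Y$ near $\pm 1$, e.g.\ $\mathbb{E}[(1-Y^2)^{-1/2}]<\infty$, as the later Plancherel remark (\ref{Parseval:00}) tacitly assumes). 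Your observation that the Krawtchouk limit carries a sign $(-1)^k$ which squares away in $Q_k(u)Q_k(v)$, or can be absorbed into the symmetric $\zeta_k$, is also a point of care the paper's statement of the limit omits.
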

\begin{proof}
First note that $(\varkappa_t)$ is well defined because the covariance function (\ref{normalmix:000}) is well defined and finite for all $s,t\in \mathbb{R}$.
Consider the expansion (\ref{symmetry:001a}) when $v = \lfloor \frac{N}{2}+ \frac{\sqrt{N}}{2}t\rfloor$.
Multiplying by $\frac{1}{2}\sqrt{\frac{N}{2^N}}$ the common multiplier factor in the expansion is then
\[
{N \choose  \lfloor \frac{N}{2}+ \frac{\sqrt{N}}{2}t\rfloor} \frac{1}{2^N}\frac{\sqrt{N}}{2}
\to \frac{1}{\sqrt{2\pi}}e^{-\frac{1}{2}t^2}
\]
and
\[
\sqrt{{N\choose k}}Q_k\Big (\big \lfloor \frac{N}{2}+ \frac{\sqrt{N}}{2}t\big \rfloor ;N,\frac{1}{2}\Big ) \to \frac{1}{\sqrt{k!}}H_k(t).
\]
See, for example, \cite{DG2012}.
These two pointwise limits prove the proposition since the covariance function (\ref{covstarlevel:00}) converges to (\ref{normalmix:000}) and this is enough for weak convergence of Gaussian processes.
\end{proof}
\begin{Remark}
A canonical form for a continuous time Markov process $\big (X(\tau)\big )_{\tau \geq 0}$
that has transition function densities of $X(\tau)=s$, with $X(0)=t$ is
\begin{equation*}
g_\tau(s,t;\nu_*)=\frac{e^{-s^2/2}}{\sqrt{2\pi}}
\Big \{1 + \sum_{k=1}^\infty e^{-\lambda_k\tau}
\frac{1}{k!}H_k(t)H_k(s)\Big \},
\label{canonical:32}
\end{equation*}
where 
\[
\lambda_k = \int_{[-1,1]}\big ( 1 - \xi^k \big )\nu_*(d\xi),
\]
with $\nu_*$ a non negative measure on $\spin$ such that the integral is convergent at $1$. This characterization is due to \cite{B1954}. Such processes appear from limits in generalized Ehrenfest urn models \citep{DG2012}. If $\nu_*=\delta_{\{0\}}$, $(X(\tau))$ is the Ornstein-Uhlenbeck process. 
Now
\[
\int_{\mathbb{R}_+} e^{-\tau } e^{-\lambda_k \tau}d\tau
= \frac{1}{1+\int_{[-1,1]}\big (1 - \xi^k\big )\nu_*^\spin(d\xi)},
\]
therefore (\ref{normalmix:000}) in Proposition \ref{proplevel:00} is equal to
\[
\text{\rm Cov}\big (\varkappa_t, \varkappa_s\big )=\frac{e^{-t^2/2}}{\sqrt{2\pi}}
\int_{\mathbb{R}_+} e^{-\tau} g_\tau(s,t;\nu_*)d\tau.
\]
\end{Remark}

\begin{Remark}
A decomposition is $\varkappa_t = \varkappa^\odd_t + \varkappa^\even_t$,
where $\varkappa^\odd_t= \frac{1}{2}\big (\varkappa_t - \varkappa_{-t}\big )$,
$\varkappa^\even_t= \frac{1}{2}\big (\varkappa_t + \varkappa_{-t}\big )$ are independent and equal to the odd and even terms in (\ref{decomposition:22}).  The further decomposition into 
odd and even processes follows because $H_{2k+1}(t)$ and $H_{2k}(t)$ are respectively odd and even functions of $t$. 
\end{Remark}
\section{A complex Gaussian field transform}
Define a complex Gaussian field by
\begin{equation*}
\widehat{\varkappa}_\theta = \int_{\mathbb{R}} e^{i\theta t}\varkappa_t dt, \ \theta \in \mathbb{R},
\label{complex:001}
\end{equation*}
with notation as in Proposition \ref{proplevel:00}. It is natural to consider the transform because it leads to a relatively simple covariance function.
\begin{Proposition}\label{prop:complex}
$(\widehat{\varkappa}_\theta)$ has a covariance matrix
\begin{equation*}
\text{\rm Cov}\big (\widehat{\varkappa}_\theta ,\widehat{\varkappa}_\phi\big )
= \mathbb{E}\big [\widehat{\varkappa}_\theta \overline{\widehat{\varkappa}}_\phi\big ]
= e^{-\frac{1}{2}(\theta^2+\phi^2)}\mathbb{E}\big [e^{\theta\phi Y}\big ], 
\label{complexcv:00}
\end{equation*}
where $Y$ is the product of points in the \pgfl (\ref{Gstar:00}).
Then
$\widehat{\varkappa}_\theta$ is distributed as $U_\theta+iV_\theta$ where $U_\theta$ and $V_\theta$ are independent real Gaussian processes with 
$U_\theta = \int_{\mathbb{R}_+} e^{i\theta t}\varkappa_t^\even dt$,
$V_\theta = i\int_{\mathbb{R}_+} e^{i\theta t}\varkappa_t^\odd dt$
and
\begin{align}
 \text{\rm Cov}\big (U_\theta,U_\phi\big )
 &= 
 e^{-\frac{1}{2}\big (\theta^2+\phi^2\big )}
 \mathbb{E}\big [\cosh\big (\theta\phi Y\big )\big ]\nonumber \\
  \text{\rm Cov}\big (V_\theta,V_\phi\big )
 &= 
 e^{-\frac{1}{2}\big (\theta^2+\phi^2\big )}
\mathbb{E}\big [ \sinh\big (\theta\phi Y\big )\big ].
\label{covoddeven:00}
\end{align}
Plancherel's theorem shows that
\begin{align}
\mathbb{E}\int_{\mathbb{R}} \lvert\varkappa_s\rvert^2ds &= 
\mathbb{E}\int_{\mathbb{R}} \lvert\widehat{\varkappa}_\theta\rvert^2d\theta\nonumber \\
& = \mathbb{E}\Bigl [\sqrt { \frac{\pi}{1-Y} }\Bigr ],
\label{Parseval:00}
\end{align}
assuming that the expectation on the right of (\ref{Parseval:00}) is finite.
\end{Proposition}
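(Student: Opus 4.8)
The plan is to exploit the Mehler kernel identity underlying (\ref{normalmix:000}): the standard bivariate normal density has the Hermite expansion $\mathfrak{n}(t,s;\rho) = \frac{1}{2\pi}e^{-\frac{1}{2}(t^2+s^2)}\sum_{k=0}^\infty \rho^k\frac{1}{k!}H_k(t)H_k(s)$, and its two–dimensional Fourier transform is its characteristic function,
\[
\int_{\mathbb{R}}\!\int_{\mathbb{R}} e^{i\theta t - i\phi s}\,\mathfrak{n}(t,s;\rho)\,dt\,ds = e^{-\frac{1}{2}(\theta^2+\phi^2)}e^{\rho\theta\phi}.
\]
First I would compute the covariance of $\widehat{\varkappa}$. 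Since $\varkappa_t$ is real, $\overline{\widehat{\varkappa}}_\phi = \int_{\mathbb{R}} e^{-i\phi s}\varkappa_s\,ds$, so Fubini gives $\mathbb{E}[\widehat{\varkappa}_\theta\overline{\widehat{\varkappa}}_\phi] = \int_{\mathbb{R}}\!\int_{\mathbb{R}} e^{i\theta t - i\phi s}\,\text{\rm Cov}(\varkappa_t,\varkappa_s)\,dt\,ds$. Inserting $\text{\rm Cov}(\varkappa_t,\varkappa_s) = \mathbb{E}[\mathfrak{n}(t,s;Y)]$ from (\ref{normalmix:000}), interchanging expectation and integration, and applying the identity above yields $e^{-\frac{1}{2}(\theta^2+\phi^2)}\mathbb{E}[e^{\theta\phi Y}]$.

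For the even/odd decomposition I would use the earlier remark that $\varkappa_t = \varkappa^\even_t + \varkappa^\odd_t$, the two pieces being independent and collecting the even-$k$ and odd-$k$ terms of (\ref{decomposition:22}). Because $\varkappa^\even$ is an even and $\varkappa^\odd$ an odd function of $t$, the transform of the even part is real and that of the odd part purely imaginary, so $\widehat{\varkappa}_\theta = U_\theta + iV_\theta$ with $U_\theta$ the real transform of $\varkappa^\even$ and $iV_\theta$ the imaginary transform of $\varkappa^\odd$; independence of $U_\theta,V_\theta$ is inherited from that of $\varkappa^\even,\varkappa^\odd$. To get their covariances I would split the Mehler kernel into its even and odd parts in $\rho$, namely $\tfrac{1}{2}\big[\mathfrak{n}(t,s;\rho)\pm\mathfrak{n}(t,s;-\rho)\big]$; applying the same Fourier identity sends $\pm\rho$ to $e^{\pm\rho\theta\phi}$, so the sum and difference produce $e^{-\frac{1}{2}(\theta^2+\phi^2)}\cosh(\theta\phi Y)$ and $e^{-\frac{1}{2}(\theta^2+\phi^2)}\sinh(\theta\phi Y)$ respectively, giving (\ref{covoddeven:00}) after taking the expectation over $Y$.

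For the Plancherel identity I would set $\phi=\theta$ in the covariance to get $\mathbb{E}|\widehat{\varkappa}_\theta|^2 = e^{-\theta^2}\mathbb{E}[e^{\theta^2 Y}]$, integrate over $\theta$, interchange the order by Fubini, and evaluate the Gaussian integral $\int_{\mathbb{R}} e^{-\theta^2(1-Y)}\,d\theta = \sqrt{\pi/(1-Y)}$, which is legitimate since $Y\le 1$; the assumed finiteness of $\mathbb{E}[\sqrt{\pi/(1-Y)}]$ then secures the value in (\ref{Parseval:00}) via the equality $\int_{\mathbb{R}}|\varkappa_s|^2ds=\int_{\mathbb{R}}|\widehat{\varkappa}_\theta|^2d\theta$. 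The hard part will be justifying these Fubini interchanges rigorously: one must verify absolute integrability of $\text{\rm Cov}(\varkappa_t,\varkappa_s)$ over $\mathbb{R}^2$ and the uniform integrability needed to pass $\mathbb{E}$ through the improper Fourier integrals, while carefully tracking the parity of the Hermite polynomials so that the even and odd components align with $\cosh$ and $\sinh$ as claimed.
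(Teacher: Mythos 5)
Your proposal is correct and follows essentially the same route as the paper: Fubini plus the bivariate normal characteristic function identity for the covariance of $\widehat{\varkappa}$, a parity argument (your even/odd split of the Mehler kernel in $\rho$ is computationally identical to the paper's four-term $e^{\pm\theta\phi Y}$ expansion) for the $\cosh$/$\sinh$ covariances of $U_\theta,V_\theta$, and setting $\phi=\theta$ followed by the Gaussian integral $\int_{\mathbb{R}}e^{-\theta^2(1-Y)}d\theta=\sqrt{\pi/(1-Y)}$ for the Plancherel identity. The paper does not address the Fubini justifications you flag, so your treatment is, if anything, slightly more careful on that point.
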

\begin{proof}
The proof follows immediately from Proposition \ref{proplevel:00}, (\ref{normalmix:000}) and the form of a bivariate normal characteristic function.
Taking care with the complex conjugate in the covariance function 
\begin{align*}
\text{Cov}\big (\hat{\varkappa}_\theta,\hat{\varkappa}_\theta\big )
&= \mathbb{E}\big [\hat{\varkappa}_\theta\overline{\hat{\varkappa}}_\phi\big ]\\
&= \int_{\mathbb{R}}\int_{\mathbb{R}}
e^{i\theta t - i\phi s}\mathbb{E}\big [f(t,s;Y)\big ]dtds\\
&= e^{-\frac{1}{2}(\theta^2+\phi^2)}\mathbb{E}\big [e^{-i^2\theta\phi Y}\big ]\\
&= e^{-\frac{1}{2}(\theta^2+\phi^2)}\mathbb{E}\big [e^{\theta\phi Y}]
\end{align*}
For (\ref{covoddeven:00})
\begin{align*}
\mathbb{E}\big [
U_\theta U_\phi\big ]
&= \frac{1}{4}e^{-\frac{1}{2}\big (\theta^2+\phi^2\big )}\mathbb{E}\big [e^{-\theta\phi Y} + e^{\theta\phi Y} + e^{\theta\phi Y} + e^{-\theta\phi Y}\big ]\\
&=  e^{-\frac{1}{2}\big (\theta^2+\phi^2\big )}
 \mathbb{E}\big [\cosh\big (\theta\phi Y\big )\big ]
 \end{align*}
 and similarly
\begin{align*}
\mathbb{E}\big [
V_\theta V_\phi \big ] &= 
-\frac{1}{4}e^{-\frac{1}{2}\big (\theta^2+\phi^2\big )}
\mathbb{E}\big [e^{-\theta\phi Y} - e^{\theta\phi Y} - e^{\theta\phi Y} + e^{-\theta\phi Y}\big ]\\
&=e^{-\frac{1}{2}\big (\theta^2+\phi^2\big )}
\mathbb{E}\big [ \sinh\big (\theta\phi Y\big )\big ].
\end{align*}
The identity (\ref{Parseval:00}) follows from 
\begin{align*}
\mathbb{E}\int_{\mathbb{R}} \lvert\widehat{\varkappa}_\theta\rvert^2d\theta
&= \mathbb{\int_{\mathbb{R}} \text{Var}(\widehat{\varkappa}_\theta})d\theta\nonumber\\ 
&=\int_{\mathbb{R}} e^{-\theta^2}\mathbb{E}\big [e^{\theta^2Y}\big ]d\theta\\
&= \mathbb{E}\Bigl [\sqrt { \frac{\pi}{1-Y} }\Bigr ].
\end{align*}
\end{proof}
\begin{Corollary}
A strong representation of $U_\theta, V_\theta$ in Proposition \ref{prop:complex} is
\begin{align}
U_\theta &= \sum_{k=0}^\infty
\frac{ \text{\rm Poisson}\big (k;\frac{1}{2}\theta^2\big ) 
 }
{ \sqrt{(2k-1+\delta_{k0})!!}  }
\mathbb{E}\big [Y_{\frac{1}{2}}^{2k}\big ](-\sqrt{2})^k  \zeta_{2k}
\nonumber\\
V_\theta &= \theta\sum_{k=0}^\infty
\frac{ \text{\rm Poisson}\big (k;\frac{1}{2}\theta^2\big )   }
{ \sqrt{(2k+1)!! } }
\cdot \mathbb{E}\big [Y_{\frac{1}{2}}^{2k+1}\big ](-\sqrt{2})^k \zeta_{2k+1},
\label{complex:88}
\end{align}
where 
$
\text{\rm Poisson}\big (k;\lambda\big ) = \frac{\lambda^k}{k!}e^{-\lambda},\ k \in \mathbb{N}.
$ \\[0.2cm]
An inversion formula is
\begin{equation}
\varkappa_t = \frac{1}{2\pi}\int_{\mathbb{R}}
e^{-i\theta t}\widehat{\varkappa}_\theta\ d\theta,\ t \in \mathbb{R}.
\label{inversion:23}
\end{equation}
\end{Corollary}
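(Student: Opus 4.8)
The plan is to read off both statements by Fourier-transforming the Hermite expansion (\ref{decomposition:22}) mode by mode, the only analytic ingredient being that the Hermite functions are, up to a phase, eigenfunctions of the Fourier transform. First I would record the transform of a single mode: integrating the generating identity $\sum_k \tfrac{w^k}{k!}H_k(t)=e^{wt-\tfrac12 w^2}$ against $e^{i\theta t}\tfrac{1}{\sqrt{2\pi}}e^{-t^2/2}$ and completing the square gives
\[
\int_{\mathbb R} e^{i\theta t}\,\tfrac{1}{\sqrt{2\pi}}e^{-t^2/2}H_k(t)\,dt=(i\theta)^k e^{-\theta^2/2},\qquad k\in\mathbb N .
\]
Substituting this into (\ref{decomposition:22}) and interchanging the sum with the integral yields the compact form
\[
\widehat{\varkappa}_\theta=e^{-\theta^2/2}\sum_{k=0}^\infty \mathbb E\big[Y_{\frac12}^k\big]\,\frac{(i\theta)^k}{\sqrt{k!}}\,\zeta_k .
\]

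Next I would split this series by the parity of $k$. Since $(i\theta)^{2k}=(-1)^k\theta^{2k}$ is real and $(i\theta)^{2k+1}=i(-1)^k\theta^{2k+1}$ is imaginary, the real part collects the even modes and the imaginary part the odd modes; these are exactly the transforms of $\varkappa^{\even}$ and $\varkappa^{\odd}$, hence equal $U_\theta$ and $V_\theta$ as in Proposition \ref{prop:complex}. To cast them in the form (\ref{complex:88}) I would rewrite the prefactors using $(2k)!=2^k k!\,(2k-1)!!$ and $(2k+1)!=2^k k!\,(2k+1)!!$, which converts $e^{-\theta^2/2}\theta^{2k}/\sqrt{(2k)!}$ into a multiple of $\text{Poisson}(k;\tfrac12\theta^2)$ carrying the factors $(-\sqrt2)^k$ and $1/\sqrt{(2k-1+\delta_{k0})!!}$, the Kronecker symbol merely forcing the $k=0$ double factorial to read as $1$. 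As a consistency check I would confirm that the coefficients reproduce (\ref{covoddeven:00}): summing the products of the $\theta$- and $\phi$-coefficients and using $\mathbb E[Y_{\frac12}^{2k}]^2=\mathbb E[Y^{2k}]$ gives $e^{-\frac12(\theta^2+\phi^2)}\sum_k \tfrac{(\theta\phi)^{2k}}{(2k)!}\mathbb E[Y^{2k}]=e^{-\frac12(\theta^2+\phi^2)}\mathbb E[\cosh(\theta\phi Y)]$ for $U$, with the $\sinh$ analogue for $V$, the moments of $Y$ taken under (\ref{Gstar:00}).

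For the inversion formula (\ref{inversion:23}) I would apply the same identity in reverse: Fourier inversion of one mode gives $\tfrac{1}{2\pi}\int_{\mathbb R}e^{-i\theta t}(i\theta)^k e^{-\theta^2/2}\,d\theta=\tfrac{1}{\sqrt{2\pi}}e^{-t^2/2}H_k(t)$, so integrating the compact series for $\widehat{\varkappa}_\theta$ term by term reassembles (\ref{decomposition:22}) and returns $\varkappa_t$. The main obstacle is not any of these identities but the rigorous justification of the two term-by-term interchanges. I would handle this in $L^2$: the partial sums of (\ref{decomposition:22}) converge in mean square since the covariance (\ref{normalmix:000}) is finite, while the standing assumption $\mathbb E\big[\sqrt{\pi/(1-Y)}\big]<\infty$ of Proposition \ref{prop:complex} yields $\int_{\mathbb R}\mathbb E[\varkappa_t^2]\,dt<\infty$; thus $\varkappa\in L^2(\Omega\times\mathbb R)$, almost every path lies in $L^1(\mathbb R)\cap L^2(\mathbb R)$, and both the Fubini step and the Plancherel/inversion step are legitimate. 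Once that is secured, the remaining matching of constants in (\ref{complex:88}) is routine factorial bookkeeping.
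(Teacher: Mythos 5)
Your route is essentially identical to the paper's own proof: the same Hermite--Fourier identity $\int_{\mathbb{R}}e^{i\theta t}\tfrac{1}{\sqrt{2\pi}}e^{-t^2/2}H_k(t)\,dt=(i\theta)^k e^{-\theta^2/2}$, the same compact series for $\widehat{\varkappa}_\theta$, the same parity split into real and imaginary parts identified with $U_\theta,V_\theta$, and the same mode-by-mode inversion; your $L^2$ justification of the two interchanges and the covariance cross-check are additions the paper does not make. However, the step you dismiss as ``routine factorial bookkeeping'' is exactly where the argument does not close, and it deserves to be named. With the correct identities $(2k)!=2^k k!\,(2k-1)!!$ and $(2k+1)!=2^k k!\,(2k+1)!!$ (the ones you cite), one gets
\begin{equation*}
e^{-\theta^2/2}\,\frac{\theta^{2k}}{\sqrt{(2k)!}}
=\text{Poisson}\big(k;\tfrac12\theta^2\big)\cdot\frac{2^{k/2}\,\sqrt{k!}}{\sqrt{(2k-1+\delta_{k0})!!}}\,,
\end{equation*}
so the even-mode coefficient carries an extra factor $\sqrt{k!}$ relative to (\ref{complex:88}), and likewise for the odd modes. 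The paper reaches the stated form only by invoking $1/\sqrt{(2k)!}=(\sqrt{2}^k/2^k)\cdot 1/\sqrt{(2k-1+\delta_{k0})!!}$, which is false for $k\ge 2$, so the defect sits in the corollary as stated rather than in your method; but your claim that the correct identities deliver (\ref{complex:88}) verbatim is not true. Your own consistency check would have exposed this had you run it on the coefficients of (\ref{complex:88}) as written rather than on the compact series: those coefficients give $e^{-\frac12(\theta^2+\phi^2)}\sum_k(\theta\phi)^{2k}\,\mathbb{E}\big[Y^{2k}\big]/\big(k!\,(2k)!\big)$, not $e^{-\frac12(\theta^2+\phi^2)}\mathbb{E}\big[\cosh(\theta\phi Y)\big]$, whereas the compact-series coefficients $\theta^{2k}/\sqrt{(2k)!}$ reproduce (\ref{covoddeven:00}) exactly. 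Everything else in your argument --- the compact series obtained from (\ref{decomposition:22}), the parity identification, the inversion formula, and the $L^2$ rigor --- is sound; the final rewriting should either reinstate the missing $\sqrt{k!}$ or be flagged as an inconsistency in the statement instead of being called routine.
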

\begin{proof}
The representations in (\ref{complex:88}) follow from taking the transform of (\ref{decomposition:22})  and identifying real and imaginary parts. When $X$ is N$(0,1)$,
$
\mathbb{E}\big [e^{i\theta X}H_k(X)\big ]
 = e^{-\frac{1}{2}\theta^2}(i\theta)^k, k \in \mathbb{N}.
$
Taking the transform
\begin{align}
\widehat{\varkappa}_\theta & =\int_{\mathbb{R}}e^{i\theta t}\varkappa_tdt\nonumber\\
& = e^{-\frac{1}{2}\theta^2}
\sum_{k=0}^\infty 
\mathbb{E}\big [Y_{\frac{1}{2}}^k\big ]\frac{1}{\sqrt{k!}}(i\theta)^k\zeta_k\label{Poissonpf:01}\\
&= e^{-\frac{1}{2}\theta^2}\sum_{k=0}^\infty 
\mathbb{E}\big [Y_{\frac{1}{2}}^{2k}\big ]\frac{1}{\sqrt{(2k)!}}(-1)^k\theta^{2k}\zeta_{2k}\nonumber\\
&+ie^{-\frac{1}{2}\theta^2}\sum_{k=0}^\infty 
\mathbb{E}\big [Y_{\frac{1}{2}}^{2k+1}\big ]\frac{1}{\sqrt{(2k+1)!}}(-1)^k\theta^{2k+1}\zeta_{2k+1}.
\nonumber
\end{align}
The Poisson expressions now follow using the identities
\begin{align*}
\frac{1}{\sqrt{(2k)!} }= \frac{ \sqrt{2}^k }{2^k}\cdot 
\frac{1}{ \sqrt{(2k-1+\delta_{k0})!!}  }\text{~and~}
\frac{1}{\sqrt{(2k+1)!}} = \frac{\sqrt{2}^k}{2^k}\cdot \frac{1}{\sqrt{(2k+1)!!}}\ .
\end{align*}
To find the inversion formula (\ref{inversion:23}) invert the series (\ref{Poissonpf:01}).
Note that 
\[
\frac{1}{\sqrt{2\pi}}\int_{\mathbb{R}} e^{-i\theta t} e^{-\frac{1}{2}\theta^2}(i\theta)^k d\theta
\]
is the coefficient of $\frac{z^k}{k!}$ in 
\[
\frac{1}{\sqrt{2\pi}}\int_{\mathbb{R}} e^{-i\theta t}
 e^{-\frac{1}{2}\theta^2}e^{i\theta z}d\theta
 = e^{-\frac{1}{2}t^2}\cdot e^{-\frac{1}{2}z^2+zt},
\]
which is $e^{-\frac{1}{2}t^2}H_k(t).$ 
Therefore from (\ref{Poissonpf:01}) and (\ref{decomposition:22})
\[
\frac{1}{2\pi}\int_{\mathbb{R}} e^{-i\theta t}
\widehat{\varkappa}_\theta\ d\theta
= \frac{1}{\sqrt{2\pi}}e^{-\frac{1}{2}t^2}\sum_{k=0}^\infty 
\mathbb{E}\big [Y_{\frac{1}{2}}^k\big ]\frac{1}{\sqrt{k!}}H_k(t)\cdot\zeta_k = \varkappa_t.
\]
\begin{Remark}
Let $W_\theta^u$ and $W_\theta^v$ be independent Poisson$(\frac{1}{2}\theta^2)$ random variables. Then marginally with respect to $U_\theta, V_\theta$
\begin{align}
U_\theta &= 
\mathbb{E}_{W_\theta^u}\Big [
\Big ((2W_\theta^u-1+ I\big \{W_\theta^u=0\big \})!!\Big )^{-\frac{1}{2}}
\mathbb{E}\big [Y_{\frac{1}{2}}^{2W_\theta^u}\big ](-\sqrt{2})^{W_\theta^u } \zeta_{2W_\theta^u}\Big ]
\nonumber\\
V_\theta &= \theta
\mathbb{E}_{W_\theta^v}\Big [
\Big ((2W_\theta^v+1)!! \Big )^{-\frac{1}{2}}
\cdot \mathbb{E}\big [Y_{\frac{1}{2}}^{2W_\theta^v+1}\big ](-\sqrt{2})^{W_\theta^v} \zeta_{2W_\theta^v+1}\Big ].
\nonumber
\end{align}
These random sums can be extended to a collection $\theta_1,\theta_2,\ldots, \theta_r$ with independent $\big (W_{\theta_j}^u, W_{\theta_j}^v\big )_{j\in [r]}$.
\end{Remark}

\end{proof}
\section{Acknowledgements}
We thank Andrea Collevecchio  and Shuhei Mano for their very helpful extensive comments. 
Two referees to thank gave a careful reading of the paper, corrections and connections.
A big thanks to Persi Diaconis for his suggestion to consider Gaussian Fields on a hypercube constructed from long range random walks. The research in this paper was stimulated by \cite{D2020}.

\section{Funding}
This research did not receive any specific grant from funding agencies in the public, commercial, or not-for-profit sectors.

\end{document}